\numberwithin{equation}{section}
\numberwithin{figure}{section}
\theoremstyle{plain}
\newtheorem{theorem}{Theorem}[section]
\newtheorem{proposition}[theorem]{Proposition}
\newtheorem{lemma}[theorem]{Lemma}
\newtheorem{corollary}[theorem]{Corollary}
\newtheorem*{conjecture*}{Conjecture}
\newtheorem{thmABC}{Theorem}
\newtheorem{corABC}[thmABC]{Corollary}
\theoremstyle{definition}
\newtheorem{definition}[theorem]{Definition}
\theoremstyle{remark}
\newtheorem{remark}[theorem]{Remark}
\newcommand{\be}{\begin{enumerate}}
        \newcommand{\ee}{\end{enumerate}}
\newcommand{\brem}{\begin{remark}}
        \newcommand{\erem}{\end{remark}}
\newcommand{\bp}{\begin{proof}}
        \newcommand{\ep}{\end{proof}}
\newcommand{\N}{\ensuremath{\mathbb{N}}}
\newcommand{\C}{\ensuremath{\mathbb{C}}}
\newcommand{\R}{\ensuremath{\mathbb{R}}}
\newcommand{\bfc}{\ensuremath{\mathbf{c}}}
\newcommand{\bfG}{\ensuremath{\mathbf{G}}}
\newcommand{\bfJ}{\ensuremath{\mathbf{J}}}
\newcommand{\bfH}{\ensuremath{\mathbf{H}}}
\newcommand{\bfU}{\ensuremath{\mathbf{U}}}
\newcommand{\mcB}{\mathcal{B}}
\newcommand{\mcM}{\mathcal{M}}
\newcommand{\mfg}{\ensuremath{\mathfrak{g}}}
\newcommand{\mfh}{\ensuremath{\mathfrak{h}}}
\newcommand{\mfj}{\ensuremath{\mathfrak{j}}}
\newcommand{\mfjL}{\ensuremath{\mathfrak{j}_L}}
\newcommand{\mfs}{\ensuremath{\mathfrak{s}}}
\newcommand{\gl}{\mathfrak{gl}}
\newcommand{\gu}{\mathfrak{gu}}
\renewcommand{\sl}{\mathfrak{sl}}
\newcommand{\su}{\mathfrak{su}}
\newcommand{\rf}{\ensuremath{{\bf k}}}
\newcommand{\Rf}{\ensuremath{{\bf K}}}
\newcommand{\lri}{\mathfrak o}
\newcommand{\cO}{\lri}
\newcommand{\ol}{\ensuremath{\overline}}
\newcommand{\wt}{\ensuremath{\widetilde}}
\newcommand{\tr}{\textup{tr}}
\newcommand{\Tr}{\mathsf{T}}
\newcommand{\Lri}{\mathfrak O}
\newcommand{\Gri}{\ensuremath{\mathcal{O}}}
\newcommand{\Gf}{\ensuremath{F}}
\newcommand{\mfp}{\mathfrak{p}}
\newcommand{\mfP}{\mathfrak{P}}
\newcommand{\wh}{\widehat}
\newcommand{\kk}{\ensuremath{\mathbf{k}}}
\newcommand{\KK}{\ensuremath{\mathbf{K}}}
\newcommand{\udots}{\mathinner{\mskip1mu\raise1pt\vbox{\kern7pt\hbox{.}}
                \mskip2mu\raise4pt\hbox{.}\mskip2mu\raise7pt\hbox{.}\mskip1mu}}
\DeclareMathOperator{\val}{val}
\DeclareMathOperator{\Cent}{C}
\DeclareMathOperator{\SL}{SL}
\DeclareMathOperator{\SU}{SU}
\DeclareMathOperator{\diag}{diag}
\DeclareMathOperator{\Hom}{Hom}
\DeclareMathOperator{\Id}{Id}
\DeclareMathOperator{\Stab}{Stab}
\DeclareMathOperator{\Ad}{Ad}
\DeclareMathOperator{\Mat}{Mat}
\DeclareMathOperator{\GL}{GL}
\DeclareMathOperator{\GU}{GU}
\DeclareMathOperator{\Sh}{Sh}
\DeclareMathOperator{\Irr}{Irr}
\DeclareMathOperator{\JLc}{J_{L}}
\DeclareMathOperator{\JUc}{J_{U}}
\DeclareMathOperator{\JJ}{{\bf J}}
\DeclareMathOperator{\JJc}{J}
\DeclareMathOperator{\jc}{\mfj}
\DeclareMathOperator{\Ind}{Ind}
\DeclareMathOperator{\Ker}{Ker}
\renewcommand{\epsilon}{\varepsilon}
\renewcommand{\phi}{\varphi}
\newcommand{\nonsq}{\rho}
\newcommand{\half}{\frac{1}{2}}
\newcommand{\lup}{{\ell_2}}
\newcommand{\ldown}{{\ell_1}}
\newcommand{\mup}{{m_2}}
\newcommand{\mdown}{{m_1}}
\newcommand{\upu}{{m_2}}
\newcommand{\downu}{{m_1}}
\newcommand{\iga}{\mathbf{I}_{\bfG}(\psi_{A_{\ldown}})}
\newcommand{\balpha}{\bar{\alpha}}
\newcommand{\bbeta}{\bar{\beta}}
\newcommand{\bsigma}{\bar{\sigma}}
\newcommand{\bnu}{\bar{\nu}}
\newcommand{\bX}{\bar{X}} 
\newcommand{\btau}{\bar{\tau}}
\newcommand{\mat}[9]{\begin{bmatrix}
                #1 & #2 & #3 \\
                #4 & #5 & #6 \\
                #7 & #8 & #9
\end{bmatrix} }
\newcommand{\newA}{{\left[\begin{matrix} \omega & 0 & 1 \\  \alpha & \omega + \delta & 0 \\ \gamma + \delta^2 &  \beta  & \omega  \end{matrix}\right] }}
\newcommand{\mcS}{\mathcal{S}}
\begin{document}

\title[Representation zeta functions of groups of type $A_2$]{Representation zeta functions of  groups of type $A_2$ in positive characteristic}

\date{\today} 
\keywords{Representation zeta functions, the Larsen--Lubotzky conjecture, Representations of compact totally disconnected groups}

\subjclass[2010]{Primary 22E50; Secondary 11M41, 20C15, 20G25, 20H05}

\author{Uri Onn}
\address{Mathematical Sciences Institute, The Australian National University, Canberra, Australia}
\email{uri.onn@anu.edu.au}

\author{Amritanshu Prasad}
\address{The Institute of Mathematical Sciences (HBNI), CIT Campus Taramani, Chennai 600113, India}
\email{amri@imsc.res.in}

\author{Pooja Singla}
\address{Department of Mathematics and Statistics, IIT Kanpur,  Kanpur 208016, India}
\email{psingla@iitk.ac.in}

\begin{abstract}  We prove two conjectures regarding the representation growth of groups of type~$A_2$. The first, conjectured by Avni, Klopsch, Onn, and Voll, regards the uniformity of representation zeta functions over local complete discrete valuation rings. The second is the Larsen--Lubotzky conjecture on the representation growth of irreducible lattices in groups of type $A_2$ in positive characteristic, assuming Serre's conjecture on the congruence subgroup problem.     
\end{abstract}
\maketitle
\setcounter{tocdepth}{1} \tableofcontents{}

\thispagestyle{empty}

   \section{Introduction}

        Let $G$ be a group and let $\Irr(G)$ denote the set of isomorphism classes of its finite-dimensional complex irreducible representations. If $G$ is a topological or a complex algebraic group, we further assume that the representations are continuous or rational, respectively.  For ${n \in \N}$ let $r_n(G)$ be the number of $n$-dimensional irreducible representations of $G$. The group~$G$ is said to have {polynomial representation growth} (PRG) if the sequence $r_n(G)$ is bounded by a polynomial function of $n$.  When $G$ has PRG, one defines its {\em representation zeta function} to be the associated Dirichlet generating function
        \[
                \zeta_G(s) =\sum_{n=1}^\infty r_n(G) n^{-s}=\sum_{\rho \in \Irr(G)} (\dim \rho)^{-s},
        \]
where $s$ is a complex variable.  The series $\zeta_G(s)$ converges for $\mathrm{Re}(s) > \alpha_G$, where
\[
\alpha_G=\inf \{ \sigma  \in \R \mid \zeta_G(\sigma) < \infty \}
\]
is the {abscissa of convergence}.

\smallskip

Representation zeta functions originated, albeit with different terminology, in the work of Frobenius~\cite{Frobenius1896a} at the end of the 19th century and later on extended by Mednyh \cite{Mednyh1978} in relation to counting points on character varieties of finite groups. In the early 1990's, they reappeared in the work of Witten \cite{Witten1991} on two-dimensional TQFT in relation to the volume of moduli spaces of flat connections on principal $G$-bundles on Riemann surfaces for~$G$ a semisimple Lie group. In the past 20 years a substantial body of work has been devoted to the representation zeta functions of $p$-adic  and arithmetic groups; see  \cite{AKOV2013,AKOV2016,JaikinZapirain2006,LL2008,LubMar2004} and references therein. See also \cite{AvniAizenbud2016} for the relation between their special values and rational singularities of the moduli space of local systems.
        \smallskip

        Our focus is on the representation zeta functions of arithmetic groups of type~$A_2$. Let~$\Gf$ be a global field, namely, a number field or the field of rational functions of an irreducible algebraic curve defined over a finite field. Let $\Gri \subset \Gf$ be the ring of integers and $\Gri_S \subset \Gf$ the ring of $S$-integers for a finite set of places $S$ of $\Gf$ that contains all the Archimedean ones. Let $\bfG$ be a connected, simply connected semisimple algebraic group defined over $\Gf$.
            By the strong approximation theorem \cite[Theorem~7.12]{MR1278263} the map 
        \[
        \eta: \wh{\bfG(\Gri_S)} \longrightarrow \bfG(\wh{\Gri_S})
        \]
        from the profinite completion to the congruence completion is surjective.
        The group $\bfG(\Gri_S)$ is said to satisfy the \emph{congruence subgroup property} (CSP) if $\eta$ has a finite kernel. 
        Lubotzky and Martin \cite{LubMar2004} proved that if the characteristic of $\Gf$ is odd and $\bfG(\Gri_S)$ satisfies the CSP then it has PRG (with a reverse implication in characteristic zero). Larsen and Lubotzky~\cite{LL2008} then obtained an Euler product decomposition for the representation zeta function of $\bfG(\Gri_S)$ in the case where $\ker\eta$ is trivial:
        \begin{equation}
          \label{eq:euler}
          \zeta_{\bfG(\Gri_S)} (s) = \zeta_{\bfG(\mathbb C)}(s)^r\times \prod_{v\notin S} \zeta_{\bfG(\Gri_v)}(s),
        \end{equation}
        where $r=0$ when $\Gf$ is a function field, and $r=[\Gf:\mathbb Q]$ when $\Gf$ is a number field.

        The following theorem was proved by Avni, Klopsch, Onn, and Voll \cite{AKOV2} in the number field case. In this paper, we extend it to the function field case.
        
        \begin{thmABC}\label{thm:basechange} Let $\Gf$ be a global field of characteristic zero or greater than $3$, and let $\Gri_{S}$ be the ring of $S$-integers in $\Gf$ for some finite set of places~$S$. Let $\bfG$ be a connected, simply connected absolutely almost simple algebraic group defined over~$\Gri_S$ with absolute root system of type~$A_2$. Suppose that the group $\bfG(\Gri_S)$ has the CSP. Then $\alpha_{\bfG(\Gri_S)}=1$. Moreover, the representation zeta function $\zeta_{\bfG(\Gri_S)}(s)$ can be meromorphically continued slightly to the left of the abscissa and has a pole of order $2$ at $s=1$.
        \end{thmABC}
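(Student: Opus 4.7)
My plan is to combine the Larsen--Lubotzky Euler product~\eqref{eq:euler} with the uniformity theorem for local representation zeta functions proved elsewhere in this paper, and then follow the analytic argument of~\cite{AKOV2}.

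First, I would apply~\eqref{eq:euler}. When $\Gf$ is a function field one has $r=0$, so the Archimedean factor disappears; when $\Gf$ is a number field it contributes a Witten zeta factor, which is holomorphic and non-vanishing on a neighbourhood of $\mathrm{Re}(s)=1$ and so does not interfere with the abscissa or with the pole. A possibly non-trivial but finite $\ker\eta$ introduces only a Dirichlet polynomial correction, which again affects neither the abscissa nor the order of the pole. This reduces the problem to analysing
\[
Z_S(s) := \prod_{v\notin S}\zeta_{\bfG(\Gri_v)}(s).
\]

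Next, I would invoke the uniformity theorem: for each finite place $v\notin S$ with residue characteristic $>3$, the local factor is a specialisation of one and the same bivariate rational function,
\[
\zeta_{\bfG(\Gri_v)}(s) = W_{\bfG}(q_v,q_v^{-s}),
\]
with $W_{\bfG}\in\Q(X,Y)$ depending only on the inner/outer type of $\bfG$ (split $A_2$ or quasi-split ${}^2\!A_2$). Since $W_{\bfG}$ coincides with the rational function computed in~\cite{AKOV2}, its analytic structure transfers wholesale: one has a factorisation $W_{\bfG}(X,Y)=(1-Y)^{-2}\,H_{\bfG}(X,Y)$ with $H_{\bfG}(X,0)=1$ and $H_{\bfG}(X,Y)=1+O(X^{-a}Y^{b})$ for exponents satisfying $b-a>1$. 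Substituting $Y=q_v^{-s}$ and taking the product over $v\notin S$ yields
\[
Z_S(s) \;=\; \zeta_{\Gf,S}(s)^{2}\cdot\prod_{v\notin S}H_{\bfG}(q_v,q_v^{-s}),
\]
where $\zeta_{\Gf,S}(s)$ denotes the $S$-incomplete Dedekind zeta function of $\Gf$. The auxiliary Euler product on the right converges absolutely on some half-plane $\mathrm{Re}(s)>1-\delta$, while $\zeta_{\Gf,S}(s)$ has a simple pole at $s=1$ and a meromorphic continuation beyond (rationally in $q^{-s}$ for function fields, via Hecke's theorem in the number field case). This gives abscissa of convergence $1$, a pole of order exactly $2$ at $s=1$, and meromorphic continuation slightly to the left, which is the content of Theorem~A.

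The hard part is \emph{not} this global Euler-product analysis, which is formally parallel to the number field argument of~\cite{AKOV2}, but the uniformity input itself in positive characteristic. Over number fields the proof of uniformity for $\bfG(\Gri_v)$ rests on the Kirillov orbit method, which is available because $\bfG(\Gri_v)$ is a $p$-adic analytic group; over local fields of positive characteristic, $\bfG(\Gri_v)=\bfG(\F_{q_v}\llbracket t\rrbracket)$ fails to be analytic, so the orbit method cannot be invoked and must be replaced by an explicit Clifford-theoretic analysis along the congruence filtration of $\bfG(\Gri_v)$, matching termwise the answer already obtained in characteristic zero. The hypothesis $\cha\Gf>3$ is precisely what makes this comparison uniform across places by avoiding the bad primes for the root system $A_2$, and the assumption of the CSP is what allows $\zeta_{\bfG(\Gri_S)}(s)$ to be read off from its profinite completion via~\eqref{eq:euler} in the first place.
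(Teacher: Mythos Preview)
Your overall strategy matches the paper's: reduce to the Euler product~\eqref{eq:euler}, invoke the uniformity theorem (Theorem~\ref{thm:local.zeta}) to transport the explicit local formulas of \cite{AKOV2} to positive characteristic, and then run a Dedekind-zeta comparison. You are also right that this global step is the easy part and that the substance lies in the uniformity input. However, the specific analytic factorisation you propose is incorrect.

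You claim $W_{\bfG}(X,Y)=(1-Y)^{-2}H_{\bfG}(X,Y)$ with $Y=q_v^{-s}$ and $\prod_v H_{\bfG}(q_v,q_v^{-s})$ absolutely convergent for $\mathrm{Re}(s)>1-\delta$, so that $Z_S(s)=\zeta_{\Gf,S}(s)^2\cdot(\text{holomorphic})$. This fails. By \cite[Corollary~D]{AKOV2}, as quoted in the paper, the leading terms of the local factor are $1+q_v^{1-2s}+q_v^{2-3s}+\cdots$, with \emph{no} $q_v^{-s}$ term. Hence
\[
(1-q_v^{-s})^{2}\,\zeta_{\bfG(\Gri_v)}(s)=1-2q_v^{-s}+O(q_v^{1-2s})+\cdots,
\]
and the residual $-2q_v^{-s}$ term makes $\prod_v H_{\bfG}(q_v,q_v^{-s})$ diverge at $s=1$. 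So $\zeta_{\Gf,S}(s)^2$ is not the right comparison object.

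What the paper actually does is compare with \emph{shifted} Dedekind zetas. The two dominant local terms $q_v^{1-2s}$ and $q_v^{2-3s}$ each produce a simple pole at $s=1$ (morally $\zeta_{\Gf}(2s-1)$ and $\zeta_{\Gf}(3s-2)$), and it is their product that yields the double pole. Rather than an exact factorisation of each Euler factor, the paper uses the approximation lemmas of \S2.1 (Lemma~\ref{lemma:fg-approx}, Corollaries~\ref{corollary:hgtoq} and~\ref{corollary:sum_to_prod}) to show
\[
\prod_v\zeta_{\bfG(\Gri_v)}(s)\ \equiv_{\alpha}\ \prod_v(1+q_v^{1-2s})\prod_v(1+q_v^{2-3s})
\]
for some $\alpha<1$, from which the conclusion follows.

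One further omission: you only allow the local group to be $\SL_3$ or $\SU_3$, but the Tits classification of absolutely simple $A_2$-groups over $\Gf$ (the paper's list (1)--(4)) also includes $\SL_1(D)$ for a cubic division algebra $D$. These local factors arise at only finitely many places and are harmless for the abscissa, but the argument needs to account for them.
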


        The proof relies on the computation of local factors in \eqref{eq:euler}.
        These computations were carried out in the characteristic zero case in \cite{AKOV2} using the Kirillov orbit method, which is not available in positive characteristic. An important consequence of Theorem~\ref{thm:basechange} is a proof of the following conjecture of Larsen and Lubotzky assuming CSP for irreducible lattices in a semisimple group of type $A_2$ over a field of positive characteristic. The characteristic zero counterpart for groups of type $A_2$ was proved in \cite{AKOV2013} and for arbitrary groups in characteristic zero in \cite{AKOV2016}.

        \begin{conjecture*}[Larsen--Lubotzky \cite{LL2008}] Let $H$ be a higher-rank semisimple group, that is, $H=\prod_i\bfG_i(F_i)$ where each $F_i$ is a local field, each $\bfG_i$ is an absolutely almost simple $F_i$-group, and $\sum_i \mathrm{rank}_{F_i} (\bfG_i ) \ge 2$. Then for any two irreducible lattices $\Gamma_1,\Gamma_2 \leq H$, $\alpha_{\Gamma_1}=\alpha_{\Gamma_2}$.
        \end{conjecture*}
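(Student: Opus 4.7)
The plan is to reduce the conjecture, in the type $A_2$ positive-characteristic case, to Theorem~\ref{thm:basechange} via Margulis arithmeticity. Since $A_2$ has rank~$2$, the higher-rank hypothesis of the conjecture is automatic as soon as there is any $A_2$ factor; in particular, Margulis arithmeticity applies to any irreducible lattice $\Gamma \leq H = \prod_i \bfG_i(F_i)$, where each $\bfG_i$ is an absolutely almost simple $F_i$-group of type $A_2$ and each $F_i$ is a non-Archimedean local field of positive characteristic.

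By Margulis arithmeticity in the higher-rank setting (available also in positive characteristic, via the work of Margulis and Venkataramana), any such $\Gamma$ is commensurable, modulo a compact kernel, with the image in $H$ of $\bfG(\Gri_S)$ for some connected, simply connected, absolutely almost simple algebraic group $\bfG$ of type $A_2$ defined over a global field $\Gf$ of the same characteristic as the $F_i$, with $S$ a finite set of places of $\Gf$ containing the Archimedean ones. Irreducibility of $\Gamma$ forces $H$ and $\bfG(\Gri_S)$ to match in the expected way: the factors $\bfG_i(F_i)$ arise as completions of $\bfG$ at non-Archimedean places $v \in S$.

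Next, I would invoke the standard fact that passing between commensurable PRG groups preserves the abscissa of convergence: restriction and induction identify irreducible representations with a uniformly bounded change in dimension, so $\zeta_{\Gamma}(s)$ and $\zeta_{\bfG(\Gri_S)}(s)$ share the same region of convergence, and a compact kernel only contributes a bounded multiplicative factor that leaves the abscissa unaffected. Combining this with the CSP hypothesis and Theorem~\ref{thm:basechange}, we obtain $\alpha_\Gamma = \alpha_{\bfG(\Gri_S)} = 1$ for every irreducible lattice $\Gamma \leq H$. In particular $\alpha_{\Gamma_1} = \alpha_{\Gamma_2}$, which is the conjecture.

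The main obstacle is not logical but rather careful verification that Margulis arithmeticity can be invoked in exactly the form needed and that the resulting arithmetic lattice falls within the scope of Theorem~\ref{thm:basechange}: $\bfG$ must be taken simply connected so that the CSP assumed for $\Gamma$ transfers to $\bfG(\Gri_S)$, one must control the (possibly anisotropic) factors coming from places outside $S$, and one must ensure that the constraint $\cha \Gf > 3$ is consistent with the given local fields $F_i$. Once these compatibility checks are in place, the remaining steps are routine and the equality $\alpha_{\Gamma_1} = \alpha_{\Gamma_2} = 1$ follows.
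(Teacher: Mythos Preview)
Your proposal is correct and follows essentially the same route as the paper's proof of Corollary~\ref{cor:LLpositive} (the $A_2$ case of the conjecture): invoke arithmeticity (Margulis in characteristic zero, Venkataramana in positive characteristic) to realize $\Gamma$ up to commensurability as $\bfG(\Gri_S)$ for a simply connected $A_2$-group $\bfG$ over a global field, use that the abscissa of convergence is a commensurability invariant, and then apply Theorem~\ref{thm:basechange} to conclude $\alpha_\Gamma=1$. The compatibility checks you flag (simply connected form, common characteristic of the local factors, residual characteristic $>3$) are exactly those the paper addresses via the arithmeticity remark and the hypotheses of Corollary~\ref{cor:LLpositive}.
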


This conjecture can be regarded as a quantitative version of Serre's conjecture on the~CSP. Indeed, Serre's conjecture asserts that an irreducible lattice in a higher-rank semisimple Lie group has the CSP. By Lubotzky-Martin \cite{LubMar2004} it has PRG, and therefore a finite abscissa of convergence. The above conjecture says that this abscissa is in fact independent of the lattice in the group.

{

\begin{remark}[Arithmeticity] Let $H=\prod_{i=1}^m \bfG_i(F_i)$, with each $F_i$ a local field, and each $\bfG_i$ is an absolutely almost simple $F_i$-algebraic group with $\sum_i \mathrm{rank}_{F_i} \bfG_i \ge 2$. If $\Gamma$ is an irreducible lattice in $H$, then 
\begin{enumerate}
    \item All the local fields have the same characteristic;
    \item The $\bfG_i$'s have the same absolute root system $\Phi$;
    \item $\Gamma$ is commensurable to $\bfH(\Gri_S)$, where $\bfH$ is an $\Gri_S$-algebraic group for some ring of $S$-integers in a global field and also has absolute root system $\Phi$. 
\end{enumerate}

 In characteristic zero this result is due to Margulis~\cite[Chapter IX]{Margulis}, and in positive characteristic to Venkataramana~\cite[Theorem 5.39]{Venky1988}. The latter is formulated for groups of adjoint type, however, the theorem holds for arbitrary types. Indeed, it follows from \cite[Prop.~3.19]{MR0316587} that an irreducible lattice in a product of groups of arbitrary type surjects on an irreducible lattice in the corresponding product of the adjoint types with a finite kernel.

\end{remark}

}

        \begin{corABC}\label{cor:LLpositive} Let $H=\prod_i \bfG_i(F_i)$ with $F_i$ local fields with residual characteristic greater than~$3$, and $\bfG_i$ groups of type $A_2$. Let $\Gamma_1$ and $\Gamma_2$ be irreducible lattices in $H$. If~$\alpha_{\Gamma_1}$ and~$\alpha_{\Gamma_2}$ are finite then they are equal. In particular, Serre's conjecture on the CSP for these groups implies the Larsen-Lubotzky conjecture. 
        \end{corABC}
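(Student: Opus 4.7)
The plan is to use the arithmeticity statements quoted in the preceding Remark to reduce the corollary to the $S$-arithmetic setting, and then to feed Theorem~\ref{thm:basechange} into the reduction. Concretely, for each irreducible lattice $\Gamma_j$ ($j=1,2$) in $H$, the Remark supplies a commensurable $S$-arithmetic group $\bfH_j(\Gri_{S_j})$, where $\bfH_j$ is a connected, simply connected, absolutely almost simple algebraic group with absolute root system of type $A_2$, defined over the ring of $S_j$-integers in a global field $\Gf_j$ whose characteristic coincides with that of the local fields~$F_i$. Since commensurable groups have the same abscissa of convergence of the representation zeta function (a standard fact recalled in \cite{AKOV2013}), I may replace $\Gamma_j$ by $\bfH_j(\Gri_{S_j})$ throughout. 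The hypothesis that the residual characteristic of each $F_i$ exceeds $3$ translates to $\cha \Gf_j \in \{0\}\cup\{p : p>3\}$, which is precisely the characteristic hypothesis required by Theorem~\ref{thm:basechange}.

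Next I would verify that CSP holds for $\bfH_j(\Gri_{S_j})$ in each of the two assertions of the corollary. In the \emph{in particular} clause, Serre's conjecture is assumed and directly supplies CSP for $\bfH_j(\Gri_{S_j})$. For the unconditional first assertion, one uses that finiteness of $\alpha_{\bfH_j(\Gri_{S_j})}$ is equivalent to PRG, and then invokes the reverse implication of Lubotzky--Martin~\cite{LubMar2004}, which in characteristic zero identifies PRG with CSP. Thus, in characteristic zero, the hypothesis $\alpha_{\Gamma_j}<\infty$ yields CSP automatically, while in positive characteristic CSP must be imported as an input (from Serre's conjecture, say, or from settings where it is already known for type $A_2$).

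Once CSP is in hand, Theorem~\ref{thm:basechange} applies to each $\bfH_j(\Gri_{S_j})$ and gives
\[
\alpha_{\Gamma_j} \;=\; \alpha_{\bfH_j(\Gri_{S_j})} \;=\; 1,
\]
so $\alpha_{\Gamma_1}=\alpha_{\Gamma_2}=1$, which proves both halves of the corollary simultaneously. The abscissa being equal to the specific value $1$ is in fact stronger than the mere equality asserted by the Larsen--Lubotzky conjecture.

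The main obstacle in the plan is organisational rather than technical: the bulk of the work lies in Theorem~\ref{thm:basechange} and in the arithmeticity theorems of Margulis and Venkataramana recalled in the Remark. The one genuinely open input is the reverse implication PRG~$\Rightarrow$~CSP in positive characteristic, which is why the unconditional first assertion of the corollary is meaningful only once that implication (equivalently, Serre's CSP conjecture) is available; the second assertion is written to make this dependence explicit.
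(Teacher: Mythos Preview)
Your proposal is correct and follows essentially the same route as the paper's own proof in \S\ref{subsec:proof.B}: arithmeticity (Margulis/Venkataramana) reduces each $\Gamma_j$ to a commensurable $S$-arithmetic group of type~$A_2$, commensurability invariance transfers the abscissa, and Theorem~\ref{thm:basechange} pins the abscissa at~$1$. If anything, you are more explicit than the paper about where the CSP hypothesis comes from---separating the characteristic-zero case (PRG~$\Rightarrow$~CSP via Lubotzky--Martin) from the positive-characteristic case (CSP supplied by Serre's conjecture)---whereas the paper's proof compresses this step.
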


 Corollary~\ref{cor:LLpositive} is a direct consequence of Theorem~\ref{thm:basechange}; see Section~\ref{subsec:proof.B}. The proof of Theorem~\ref{thm:basechange} is based on the following result which is of independent interest.  Let~$\lri$ be a compact discrete valuation ring with maximal ideal $\mfp=(\pi)$ and finite residue field $\kk$. For $\ell \in \N$ let $\lri_\ell=\lri/\mfp^{\ell}$ denote the finite quotient. Let $\Lri \supset\lri$ be a quadratic unramified extension. Let $\GU_d$ and $\SU_d$ be the unitary and special unitary groups, respectively, with respect to $\Lri$; see~Section~\ref{sec:preliminaries}. 

                \begin{thmABC}\label{thm:local.zeta} Let $\lri$ be a compact discrete valuation ring with maximal ideal $\mfp$ and residue field of characteristic $p > 3$. Let $\Lri/\lri$ be a quadratic unramified extension.  The representation zeta functions of $\GL_3(\lri/\mfp^\ell)$, $\GU_3(\lri/\mfp^\ell)$, $\SL_3(\lri/\mfp^\ell)$, $\SU_3(\lri/\mfp^\ell)$, $\SL_3(\lri)$ and $\SU_3(\lri)$ depend only on~$|\lri/\mfp|$. 
        \end{thmABC}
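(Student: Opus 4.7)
The plan is to extend the Kirillov-orbit-based argument of AKOV \cite{AKOV2013,AKOV2} from characteristic zero to arbitrary residue characteristic $p>3$, by replacing the exponential/logarithm machinery of the orbit method with an explicit Clifford-theoretic construction of the irreducibles of $G(\lri_\ell)$ along the principal congruence filtration. The key structural input — namely that $3\times3$ nilpotent matrices have nilpotency index at most $3<p$, so that truncated $\exp$ and $\log$ remain well-defined bijections — is insensitive to whether $\lri$ has characteristic zero or $p$. The goal is to verify that every combinatorial datum entering the construction (orbit decompositions, stabilisers, Weil-type multiplicities) depends only on $q=|\lri/\mfp|$.

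The first step is to reduce to $\GL_3$ and $\GU_3$. The determinant short exact sequences have abelian quotients $\lri_\ell^\times$, respectively the norm-one subgroup of $\Lri_\ell^\times$, whose character theory and whose actions on $\Irr$ are both controlled by $q$ and $\ell$ alone when $p>3$; a standard Clifford-theoretic inversion then recovers $\zeta_{\SL_3(\lri_\ell)}$ and $\zeta_{\SU_3(\lri_\ell)}$ from $\zeta_{\GL_3(\lri_\ell)}$ and $\zeta_{\GU_3(\lri_\ell)}$. For the infinite groups $\SL_3(\lri)$ and $\SU_3(\lri)$, every continuous irreducible factors through some $G(\lri_\ell)$, so the uniformity at finite level passes to the limit.

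For $G\in\{\GL_3,\GU_3\}$ and $\ell\geq 2$, I would apply Clifford theory along the kernels $K^i=\ker(G(\lri_\ell)\to G(\lri_i))$. For even level $\ell=2m$, the subgroup $K^m$ is abelian, and the truncated exponential identifies it with the additive group of $\mfg(\mfp^m/\mfp^{\ell})$; its characters are then parametrised by $\mfg(\lri_m)$ via the trace form. The induced action of $G(\lri_\ell)/K^m\cong G(\lri_m)$ descends to the adjoint action of $G(\kk)$ on $\mfg(\kk)$, whose orbits and stabilisers are classified by Jordan/similarity data over $\kk$ and depend only on $q$. Stabilisers at intermediate levels unwind inductively into products of smaller general-linear / unitary groups over $\lri_i$, handled recursively. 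For odd level $\ell=2m+1$, $K^m$ is no longer abelian; one constructs the Heisenberg-type quotient $K^m/\ker\psi$ attached to a character $\psi$ of $K^{m+1}$ and extends $\psi$ via a Weil-type representation to its inertia group. The condition $p>3$ ensures both that the commutator on $K^m/K^{2m+1}$ defines a symplectic form over $\kk$ and that the associated metaplectic/Weil construction returns a representation whose dimension is a polynomial in $q$.

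The main obstacle is the odd-level case. Whereas in characteristic zero the Kirillov orbit method furnishes a single uniform description for all levels, in positive characteristic one must construct by hand the Heisenberg extensions attached to each adjoint orbit, checking stratum by stratum — indexed by Jordan type over $\kk$ — that the resulting dimensions agree with the characteristic-zero count. The hypothesis $p>3$ enters precisely at the points where denominators of $2$ and $3$ appear (normalising truncated $\exp/\log$ on nilpotents of index $\leq 3$, and splitting the metaplectic cocycle), and verifying that these normalisations produce characteristic-independent outputs is the crux of the argument.
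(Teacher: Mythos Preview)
Your proposal captures the overall Clifford-theoretic architecture correctly, and the observation that truncated $\exp/\log$ work on $3\times 3$ nilpotents when $p>3$ is indeed the right enabling condition. However, there is a genuine gap at the point you flag as ``the main obstacle'', and it is not the odd-versus-even parity issue you describe.

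The real difficulty is the single non-regular non-zero nilpotent orbit $[E]$ (with $E=E_{1,3}$). Your claim that ``stabilisers at intermediate levels unwind inductively into products of smaller general-linear/unitary groups over $\lri_i$'' is false for this orbit: the centraliser $\bfJ(\lri_\ell)=Z_{\bfG(\lri_\ell)}(E)$ is a $5$-dimensional upper-triangular group, not a product of smaller classical groups, so no recursive reduction to lower rank is available. Moreover, for representations lying over $[E]$, working only at depth $\ldown=\lfloor\ell/2\rfloor$ and $\lup=\lceil\ell/2\rceil$ is insufficient: the relevant characters $\psi_{A_{\ldown}}$ carry a secondary valuation parameter $m=\min\{\val(\alpha),\val(\beta),\val(\gamma)\}$, and one must pass to the deeper levels $m_1=\lfloor(\ell-m)/2\rfloor$, $m_2=\lceil(\ell-m)/2\rceil$ and analyse the chain $\bfH(\lri_\ell)\bfJ(\lri_\ell)^{m_1}\bfG(\lri_\ell)^{\ldown}$ case by case (Propositions~\ref{Prop:S-basic}--\ref{prop:Stab-psi-B}). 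The Heisenberg/Weil step then occurs on $\bfJ(\lri_\ell)^{m_1}\bfG(\lri_\ell)^{\ldown}/\bfJ(\lri_\ell)^{m_2}\bfG(\lri_\ell)^{\lup}$, not on $K^{\ldown}/K^{\lup}$. Finally, showing that the resulting contribution to the zeta function is uniform in $q$ requires separately computing $\zeta_{\bfJ(\lri_\ell)}(s)$ for the non-classical group $\bfJ$ (Theorem~\ref{thm:zeta.e}), which is an honest representation-theoretic calculation not covered by any general machinery. Your outline would go through for all the other orbit types (regular via \cite{KOS}, decomposable via Theorem~\ref{thm:dim.reduction.decomposable}, scalar by twisting), but the nilpotent orbit $[E]$ is precisely where the characteristic-zero orbit method breaks down and bespoke work is required.
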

Theorem~\ref{thm:local.zeta} gives an affirmative answer to the $A_2$ case of \cite[Conjecture~1.6]{AKOV2} and to the~$\lambda=\ell^3$ case of \cite[Conjecture~1.3]{Onn2008}. The zeta functions appearing in Theorem~\ref{thm:local.zeta} were explicitly computed in \cite{AKOV2} with restrictions on the characteristic of $\lri$ and $\lri/\mfp$.
Let $e$ denote the absolute ramification index of $\lri$.
In \cite{AKOV2}, these explicit expressions were shown to hold only in residual characteristic $p\geq \min\{3\ell,3e+3\}$ in the case of $\GL_3(\lri/\mfp^\ell)$, $\GU_3(\lri/\mfp^\ell)$, $\SL_3(\lri/\mfp^\ell)$ and~$\SU_3(\lri/\mfp^\ell)$,
and in residual characteristic $p\geq 3e+3$ for $\SL_3(\lri)$ and $\SU_3(\lri)$ when $\lri$ has characteristic~$0$.
Theorem~\ref{thm:local.zeta} implies that those same formulae hold  in residual characteristic $p>3$, independent of the characteristic of $\lri$.
In particular, Theorem~\ref{thm:local.zeta} gives an improvement of the results of \cite{AKOV2} even when $\lri$ has characteristic $0$.
Having explicit formulae for small residue fields such as the field of order $5$ is helpful as these are often the only fields for which brute force experimentation using GAP is feasible.

The rigidity of representation zeta functions reflected in  Theorem \ref{thm:local.zeta} should be contrasted with subgroup growth zeta functions, in which groups over different rings have completely different subgroup growth behavior. Indeed, a pro-$p$ group is $p$-adic analytic if and only if it has polynomial subgroup growth \cite[Ch 4]{LubotzkySegal2003}. Theorem~\ref{thm:local.zeta} represents the main challenge of the present paper as one cannot use the orbit method. We remark that for small primes one does not expect a uniform zeta function; see~\cite{hassain-odd, Singla-Hassain}. 

In the course of proving Theorem~\ref{thm:local.zeta}, we discovered a surprising relation between the representation zeta functions of certain groups which, if generalized, may become a valuable tool to study representations of such groups by dimension reduction; see \S\ref{sec:construction-of-non-regular-characters-G} and \S\ref{sec:proof.E}. To describe it we need more notation. 

Let $\bfG$ be a reductive group defined over $\lri$ and let $\mfg$ denote its Lie algebra. 
The group $\bfG(\lri)$ is profinite, therefore every continuous irreducible representation $\rho$ of $\bfG(\lri)$ factors through a finite congruence quotient. The smallest $\ell=\ell(\rho)$ such that $\rho$ factors through $\bfG(\lri_{\ell+1})$ is called the {\em level} of $\rho$. Fix $\ell \in \N$ and let $\bfG(\lri_{\ell})^{\ell-1}:=\ker\left(\bfG(\lri_\ell) \to \bfG(\lri_{\ell-1})\right)$ be the smallest principal congruence subgroup in $\bfG(\lri_\ell)$. The latter is isomorphic to $(\mfg(\kk),+)$ using the map $I+\pi^{\ell-1} x \mapsto x$, and the conjugation action of $\bfG(\lri_\ell)$ factors through the adjoint action of~$\bfG(\kk)$. Possibly excluding small primes, the fact that $\mfg$ is reductive allows one to use the Killing form to  identify the Pontryagin dual $\mfg(\rf)^\vee$ with $\mfg(\rf)$ as $\bfG(\kk)$-spaces. For every $\Omega \in \bfG(\kk) \backslash \mfg(\kk)$ let $\Irr(\bfG(\lri_\ell) \mid \Omega)$ be the set of equivalence classes of irreducible representation lying above $\Omega$. One can then use Clifford theory to partition the irreducible representations of $\bfG(\lri_\ell)$ 
\begin{equation}\label{eq:irr.orbits}
\Irr(\bfG(\lri_\ell)) = \coprod_{\Omega \in \bfG(\kk) \backslash \mfg(\kk)} \Irr(\bfG(\lri_\ell) \mid \Omega). 
\end{equation}
 To prove Theorem~\ref{thm:local.zeta} we focus on the different families of orbits, and treat each of them by appropriate means. The following two theorems indicate that the analysis of representations lying above each orbit may be reduced to a lower dimensional problem. For decomposable matrices, the correspondence is transparent and given, in arbitrary dimensions, by the following theorem.

        \begin{thmABC}\label{thm:dim.reduction.decomposable} Let $\lri$ be a compact discrete valuation ring with residue field $\kk$ of odd degree and let $\Lri/\lri$ be a quadratic unramified extension with residue field $\KK$. Let $\bfG$ be either $\GL_d$, $\GU_d$, $\SL_d$ or $\SU_d$ for $d \in \N$, and let $\mfg \subset \gl_d$ be the corresponding Lie algebra. For $\SL_d$ and~$\SU_d$ assume further that $p \nmid d$. Let ${\xi} \in \mfg(\kk)$ be a matrix which is conjugate to a block diagonal matrix $\diag({\xi_1},{\xi_2})$ such that ${\xi_i} \in \gl_{d_i}(\KK) \cap \mfg(\kk)$ has characteristic polynomial $f_i(t) \in \KK[t]$ with~$f_1(t)$ and $f_2(t)$ coprime, and let $\bfG_i = \bfG \cap \GL_{d_i}$. Then there exists a bijection
        \[
        \Irr(\bfG(\lri_\ell) \mid [{\xi}]) \xlongleftrightarrow{1:1} \Irr(\bfG_1(\lri_\ell) \mid [{\xi_1}]) \times \Irr(\bfG_2(\lri_\ell) \mid [{\xi_2}])
        \]
with constant dimension ratio 
$[\bfG(\kk):\bfG_1(\kk) \times \bfG_2(\kk)]|\kk|^{\half\left(\dim\bfG-\dim\bfG_1-\dim\bfG_2\right)(\ell-2)}$. 

Consequently
      \[
             \zeta_{\bfG(\lri_\ell), [\xi]}(s)=\frac{|\bfG(\kk)|^{-s}}{|\bfG_1(\kk)|^{-s} | \bfG_2(\kk)|^{-s}}|\kk|^{-\half\left(\dim\bfG-\dim\bfG_1-\dim\bfG_2\right)(\ell-2)s}\zeta_{\bfG_1(\lri_\ell),[\xi_1]}(s)\zeta_{\bfG_2(\lri_\ell),[\xi_2]}(s).
                \]

        \end{thmABC}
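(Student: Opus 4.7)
The plan is to combine Clifford theory with a Heisenberg-type descent argument exploiting the block-diagonal structure afforded by the coprimality hypothesis. Let $K := \bfG(\lri_\ell)^{\ell-1} \cong (\mfg(\kk),+)$ be the top congruence subgroup. Under the Killing-form identification of its Pontryagin dual with $\mfg(\kk)$, the element $\xi$ corresponds to a character $\psi_\xi$ of $K$. Since $\xi_1,\xi_2$ have coprime characteristic polynomials, $\Cent_{\bfG(\kk)}(\xi) = \bfG_1(\kk)\times\bfG_2(\kk)$, so the $\bfG(\lri_\ell)$-stabilizer $H$ of $\psi_\xi$ is the full reduction-mod-$\pi$ preimage of $\bfG_1(\kk)\times \bfG_2(\kk)$. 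Standard Clifford theory, with $K$ abelian and normal, yields $\Irr(H\mid\psi_\xi) \xlongleftrightarrow{1:1} \Irr(\bfG(\lri_\ell)\mid[\xi])$ via induction, multiplying dimensions by $[\bfG(\lri_\ell):H] = [\bfG(\kk):\bfG_1(\kk)\times\bfG_2(\kk)]$.

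\emph{Heisenberg descent inside $H$.} Let $J := \bfG_1(\lri_\ell)\times \bfG_2(\lri_\ell)$ and decompose $\mfg = (\mfg_1\oplus\mfg_2)\oplus \mfn$ with $\mfn$ the off-diagonal component. Since $\psi_\xi$ is trivial on $K^{\mathrm{off}}\cong\mfn(\kk)$, the intermediate subgroup $\tilde{J} := J\cdot K$ is the semidirect product $J\ltimes K^{\mathrm{off}}$, and every $\rho_1\boxtimes\rho_2 \in \Irr(J\mid\psi_{\xi_1}\otimes\psi_{\xi_2})$ inflates along $\tilde{J}\twoheadrightarrow J$ to a representation of $\tilde{J}$ lying above $\psi_\xi$. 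I then iterate a Heisenberg/Weil-type extension along the filtration $\tilde{J} = JK_{\ell-1}\subset JK_{\ell-2}\subset\cdots\subset JK_1 = H$, whose successive quotients are copies of $\mfn(\kk)$. The commutator in $H$ composed with $\psi_\xi$ defines a symplectic pairing on $\bigoplus_{i=1}^{\ell-2}\mfn(\kk)$ whose non-degeneracy is equivalent to the invertibility of $\ad(\xi)\colon\mfn(\kk)\to\mfn(\kk)$, which is itself a direct consequence of the coprimality of the characteristic polynomials. This produces a unique irreducible representation of $H$ above $\psi_\xi$ extending $\rho_1\boxtimes\rho_2$, multiplying the dimension by the square root of $[H:\tilde{J}] = |\kk|^{(\ell-2)\dim\mfn}$, namely $|\kk|^{(\ell-2)\dim\mfn/2}$.

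\emph{Assembly and special cases.} The tautological identification $\Irr(J\mid\psi_{\xi_1}\otimes\psi_{\xi_2}) = \Irr(\bfG_1(\lri_\ell)\mid[\xi_1])\times\Irr(\bfG_2(\lri_\ell)\mid[\xi_2])$, combined with the two preceding steps, yields the asserted bijection and dimension ratio; the zeta function identity then follows by summing over dimensions. For $\SL_d$ and $\SU_d$, the hypothesis $p\nmid d$ ensures that restriction along $\sl_d\hookrightarrow\gl_d$ preserves non-degeneracy of the Killing form on $\mfn$ and invertibility of $\ad(\xi)$, so the argument applies verbatim. The unitary cases $\GU_d$ and $\SU_d$ are formally parallel, with $\kk$ replaced by $\KK$ and the analysis carried out for the Galois-fixed points defining $\gu_d\subset\gl_d(\KK)$; the odd-degree hypothesis on the residue field ensures the relevant traces and pairings behave as in the linear case.

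The principal obstacle is the Heisenberg descent: because $J$ is not normal in $H$, this cannot be accomplished by a single Clifford step. Instead one iterates a Mackey--Clifford argument through the congruence filtration, invoking at each level a Heisenberg-type construction whose non-degeneracy relies on the coprimality hypothesis. Obtaining the correct exponent $(\ell-2)\dim\mfn/2$, rather than the naive $(\ell-1)\dim\mfn/2$ that the full off-diagonal content of $K_1$ might suggest, requires careful book-keeping of the interplay between group commutators in $H$ and Lie brackets in $\mfg$, and is the most delicate part of the argument.
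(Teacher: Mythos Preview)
Your plan diverges from the paper's argument in a way that leaves a genuine gap. You work with the \emph{top} congruence kernel $K=\bfG(\lri_\ell)^{\ell-1}$ and then attempt an iterated Heisenberg descent along the chain $JK_{\ell-1}\subset JK_{\ell-2}\subset\cdots\subset JK_1=H$. The paper instead drops immediately to the \emph{middle} congruence kernel $K^m$ with $m=\lfloor\ell/2\rfloor$ (the largest abelian one), so that characters of $K^m$ are parametrised by elements of $\mfg(\lri_m)$ rather than $\mfg(\kk)$. The key input you are missing is Proposition~\ref{prop:decomp.orbits}: a Hensel-type argument showing that \emph{every} lift $\wt\xi\in\mfg(\lri_m)$ of $\xi$ can be $\bfG(\lri_m)$-conjugated to block-diagonal form $j(\wt\xi_1,\wt\xi_2)$, and that the centraliser of such a $\wt\xi$ already sits inside $\bfG_1\times\bfG_2$. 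Once this is in hand, the off-diagonal piece $L^m=\exp(\pi^m\mathfrak l(\lri_m))$ lies in $\ker\psi_{\wt\xi}$, so $\Stab_{\bfG(\lri_\ell)}(\wt\xi)/L^m\cong\Stab_{\bfG_1(\lri_\ell)}(\wt\xi_1)\times\Stab_{\bfG_2(\lri_\ell)}(\wt\xi_2)$ and a single Clifford step (plus, for $\ell$ odd, one genuine Heisenberg step on $K^m/K^{m+1}$ with Lagrangian $\mathfrak l_{12}(\kk)$) finishes the proof.

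Your descent, by contrast, is not well-posed as written. The subgroup $\tilde J=JK_{\ell-1}$ is \emph{not} normal in $H$ (since $K_1$ does not normalise $J$), so there is no quotient $H/\tilde J$ on which to place a symplectic form, and in any case the commutator $[H,H]$ does not land in $K_{\ell-1}$, so ``commutator composed with $\psi_\xi$'' is undefined for generic pairs in $H$. What one can say is that $[K_i,K_{\ell-1-i}]\subset K_{\ell-1}$, which pairs the off-diagonal slice at level $i$ against that at level $\ell-1-i$; but turning this into an iterated Heisenberg construction requires, at each step, choosing and tracking an extension of the character to a larger subgroup, controlling its stabiliser, and verifying irreducibility --- none of which you carry out. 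The honest way to organise this bookkeeping is precisely to pass to the middle level and invoke the block-diagonalisation of lifts, which is what the paper does.
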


        One of the most interesting local observations in this paper is the following \lq dimension reduction\rq~ result, modeled after~Theorem~\ref{thm:dim.reduction.decomposable}. We require the following definition. Let ${\xi \in \mfg(\kk)}$ and let ${\wt{\xi} \in \mfg(\lri_\ell)}$ be a lift. The {\em shadow} of $\wt{\xi}$, denoted $\Sh_{\bfG(\lri_\ell)}(\wt{\xi})$, is the image of its centraliser~$Z_{\bfG(\lri_\ell)}(\wt{\xi})$ in $\bfG(\kk)$; see \cite[Definition 2.2]{AKOV2}. In general, one has $\Sh_{\bfG(\lri_\ell)}(\wt{\xi}) \leq Z_{\bfG(\kk)}({\xi})$.  We say that a lift  $\wt{\xi}$ of $\xi$ is {\em shadow-preserving}, if equality holds.

        \begin{thmABC}\label{thm:dim.reduction.J} Let $\lri$ be a compact discrete valuation ring and let $\Lri/\lri$ be a quadratic ring extension. Let~$\bfG$ be either $\GL_3$ or $\GU_3$ and $\mfg$ be the corresponding Lie algebra. Let $\wt{\xi} \in \mfg(\lri_\ell)$ be a shadow-prerserving lift of $\xi \in \mfg(\kk)$.  Then
        \[
        \zeta_{\bfG(\lri_\ell), [\xi]}(s)=[\bfG(\kk):Z_{\bfG(\kk)}(\xi) ]^{-s}|\kk|^{-\half(\dim \bfG -\dim Z_{\bfG}(\xi)) (\ell-2)s}\zeta_{Z_{\bfG(\lri_{\ell-1})}(\wt{\xi}  \!\!\!\!\mod \pi^{\ell-1})}(s).
        \]
        \end{thmABC}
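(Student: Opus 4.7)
Set $G := \bfG(\lri_\ell)$, $\bar G := \bfG(\lri_{\ell-1})$, and $K := \ker(G \twoheadrightarrow \bar G)$, the terminal principal congruence subgroup, which is abelian and canonically identified with $\mfg(\kk)$ via $I + \pi^{\ell-1} x \mapsto x$. Through the trace pairing, $\xi \in \mfg(\kk)$ corresponds to a character $\psi_\xi$ of $K$ whose $G$-orbit is $[\xi]$. My first move is the standard Clifford reduction: the $G$-stabilizer of $\psi_\xi$ is $H := \pi^{-1}(Z_{\bfG(\kk)}(\xi))$, so $\Ind_H^G$ is a dimension-scaling bijection $\Irr(H \mid \psi_\xi) \xrightarrow{\sim} \Irr(G \mid [\xi])$ with factor $[G:H] = [\bfG(\kk):Z_{\bfG(\kk)}(\xi)]$, accounting for the first factor in the formula.

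Next I would introduce the intermediate subgroup $J := \pi^{-1}(C_{\ell-1})$ with $C_{\ell-1} := Z_{\bar G}(\wt{\xi}_{\ell-1})$. Shadow-preservation of $\wt{\xi}$ cascades to every intermediate level (since the image of $Z_G(\wt{\xi})$ in each $\bfG(\lri_m)$ centralizes the reduction of $\wt{\xi}$), forcing $C_{\ell-1} \twoheadrightarrow Z_{\bfG(\kk)}(\xi)$, so $J \leq H$ with $J/K = C_{\ell-1}$, and an iterated count of principal-congruence quotients gives $[H:J] = |\kk|^{(\dim\bfG - \dim Z_{\bfG}(\xi))(\ell-2)}$, the square of the second factor. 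Moreover, shadow-preservation yields the product decomposition $J = Z_G(\wt{\xi}) \cdot K$, which is the structural handle I would exploit. The remaining task is to establish a $[H:J]^{1/2}$-dimension-scaling bijection $\Irr(C_{\ell-1}) \xrightarrow{\sim} \Irr(H \mid \psi_\xi)$, in two steps: (i) construct a linear character $\wt{\psi_\xi}: J \to \C^\times$ extending $\psi_\xi$ by first extending $\psi_\xi|_{Z_G(\wt{\xi}) \cap K}$ to a character of $Z_G(\wt{\xi})$ and then amalgamating with $\psi_\xi$ on $K$ via the product $J = Z_G(\wt{\xi}) \cdot K$; this gives a dimension-preserving bijection $\Irr(C_{\ell-1}) = \Irr(J/K) \xrightarrow{\sim} \Irr(J \mid \psi_\xi)$ via $\sigma \mapsto \wt{\psi_\xi} \otimes (\sigma \circ p)$; (ii) apply the finite-group Stone--von Neumann/Heisenberg correspondence: the commutator pairing on $H$, evaluated through $\wt{\psi_\xi}$, induces a non-degenerate alternating form on the filtered $\kk$-vector space $H/J$ for which $J$ is a self-orthogonal polarization, so $\Ind_J^H$ yields a $[H:J]^{1/2}$-dimension-scaling bijection $\Irr(J \mid \psi_\xi) \xrightarrow{\sim} \Irr(H \mid \psi_\xi)$. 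Composing these with the Clifford step, every $\sigma \in \Irr(C_{\ell-1})$ corresponds to a unique $\rho \in \Irr(G \mid [\xi])$ of dimension $[\bfG(\kk):Z_{\bfG(\kk)}(\xi)] \cdot |\kk|^{\half(\dim\bfG - \dim Z_\bfG(\xi))(\ell-2)} \cdot \dim\sigma$, yielding the zeta-function identity.

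The main obstacle is step (i), the construction of $\wt{\psi_\xi}$. In characteristic zero this extension is furnished for free by the Kirillov orbit method via $g \mapsto \psi(\tr(\wt{\xi} \cdot \log g))$, but in our mixed/positive-characteristic setting the logarithm is unavailable, so the extension of $\psi_\xi|_{Z_G(\wt{\xi}) \cap K}$ to a character of $Z_G(\wt{\xi})$ must be produced by hand. The $Z_G(\wt{\xi})$-invariance of the would-be character follows from the identity $\tr(\xi \cdot d y d^{-1}) = \tr(\xi y)$ for $d$ reducing into $Z_{\bfG(\kk)}(\xi)$, but existence of the extension is a genuine obstruction in $H^2(C_{\ell-1}, \C^\times)$ that must be checked to vanish by exploiting the specific structure of $\bfG \in \{\GL_3, \GU_3\}$; the short finite list of centralizer conjugacy types of $\xi$ in the rank-one setting is exactly what renders a case-by-case verification feasible and is why the theorem restricts to type $A_2$. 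A secondary but routine point is non-degeneracy of the Heisenberg form on $H/J$, which reduces on each graded piece to non-degeneracy of the trace pairing $(y_1, y_2) \mapsto \tr(\xi[y_1, y_2])$ on $\mfg(\kk)/Z_{\mfg}(\xi)$, and is safe for residue characteristic away from a short list of small primes.
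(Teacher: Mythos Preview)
Your overall architecture---Clifford down to $H$, then a Heisenberg-type passage from $J$ to $H$---differs from the paper's route and would be elegant if it worked, but step~(ii) contains a genuine gap that is not the one you flag. The subgroup $J$ (the preimage in $G$ of $C_{\ell-1}$) is \emph{not normal} in $H$ once $\ell\ge 3$: for $\xi=E$ and $\wt\xi=E$, the element $h=1+\pi E_{31}$ lies in $H$ (it reduces to $1\in Z_{\bfG(\kk)}(E)$), yet $hEh^{-1}=E+\pi(E_{33}-E_{11})+O(\pi^2)$ in $\bar G$, so $h$ does not normalise $C_{\ell-1}=Z_{\bar G}(E)$ and hence not $J$. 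Consequently $[H,H]\not\subseteq J$, the pairing $\wt{\psi_\xi}([h_1,h_2])$ is not defined on all of $H\times H$, and there is no single Stone--von~Neumann step producing a $[H:J]^{1/2}$-scaling bijection $\Irr(J\mid\psi_\xi)\to\Irr(H\mid\psi_\xi)$. Your product decomposition $J=Z_G(\wt\xi)\cdot K$ is also stronger than shadow-preservation delivers: the hypothesis gives $Z_G(\wt\xi)\twoheadrightarrow Z_{\bfG(\kk)}(\xi)$, not $Z_G(\wt\xi)\twoheadrightarrow C_{\ell-1}$.

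The paper does not attempt a direct bijection of this kind. It proves the theorem orbit-type by orbit-type: scalar and regular orbits are handled by twisting and by the regular-character construction, decomposable orbits by Theorem~D, and the essential case $\xi=E$ by computing $\zeta_{\bfG(\lri_\ell)\mid E}(s)$ and $\zeta_{\bfJ(\lri_{\ell-1})}(s)$ independently via parallel multi-layer constructions (Sections~7 and~8) and then matching the two explicit formulae term by term. What replaces your one-shot Heisenberg step is a staircase of intermediate subgroups $\bfH(\lri_\ell)\,\bfJ(\lri_\ell)^{m_1}\,\bfG(\lri_\ell)^{\ldown}$ indexed by the valuation $m$ of the off-diagonal perturbation of the orbit representative, with a genuine symplectic quotient appearing only at the final layer. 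Your instinct that step~(i) needs case-by-case work is correct and matches what the paper does, but it is step~(ii) that cannot be compressed into a single induction.
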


\subsection{Organisation of the paper} The paper is organised as follows. Section \ref{sec:global} is devoted to global results: we prove Theorem~\ref{thm:basechange} and Corollary~\ref{cor:LLpositive} assuming Theorem~\ref{thm:local.zeta}.  In Section \ref{sec:preliminaries} we set up the notation and collect basic facts about unitary groups over local rings, anti-Hermitian matrices, characters and co-adjoint orbits. Section \ref{sec:local.zeta} is devoted to local zeta functions: we explain the partition of the zeta functions according to orbits, and reduce the proof of Theorem~\ref{thm:local.zeta} to Theorem~\ref{thm:dim.reduction.decomposable}; the construction of the zeta-component associated with the non-regular non-trivial nilpotent orbit (Theorem~\ref{thm:construction}); and the uniformity of the associated zeta function (Theorem~\ref{thm:zeta.e}). Section~\ref{sec:coad.orbits} is devoted to co-adjoint orbits and Section~\ref{sec:proof.thm.D} to a proof of  Theorem~\ref{thm:dim.reduction.decomposable}. The technical core of the paper is in Sections~\ref{sec:construction-of-non-regular-characters-G} and~\ref{sec:proof.E}, where we prove Theorem~\ref{thm:construction} and Theorem~\ref{thm:zeta.e}, respectively. In Section~\ref{sec:proof.E} we compute the representation zeta function of centralisers of certain lifts of the non-regular non-zero nilpotent which leads to the proof of Theorem~\ref{thm:dim.reduction.J}.

\subsection{Acknowledgments}  
The first author was supported by the Australian Research Council, Fellowship FT160100018. The third author acknowledges the support of SERB, India, for financial support through SPG/2022/001099. This collaboration was partially supported by SPARC project SPARC/2018-2019/P88/SL of the Ministry of Human Resource Development, Government of India. {We thank Anup Dixit, Alex Lubotzky, Ehud Meir, Gopal Prasad, K~Srinivas, B~Sury, T~N~Venkataramana, and Christopher Voll, for some very helpful discussions.}

\section{Global results: Proofs of Theorem~\ref{thm:basechange} and Corollary~\ref{cor:LLpositive}}\label{sec:global}

The proof of Theorem~\ref{thm:basechange} follows from a comparison of the Dedekind zeta functions of the function field and the number field cases, general results on Euler products, and Theorem~\ref{thm:local.zeta}.    

\subsection{Generalities on Euler Products}
Let $\Gf$ be a global field.
Let $V(\Gf)$ denote the set of places of $\Gf$.
For each non-Archimedean place $v$, let $q_v$ denote the order of the residue field associated with $v$.
Let $S$ denote a finite set of places of $F$, including the Archimedean ones.
Consider a system of local factors, namely, a family $f = \{f_v(q_v;s)\}_{v\notin S}$ of holomorphic functions defined in the region $\mathrm{Re}(s)>0$ taking positive real values when $s$ is a positive real number. 
We are interested in Euler products of the form
\begin{equation}
  \label{eq:euler-formal}
  \zeta(f;s) = \prod_{v\notin S} (1 + f_v(q_v;s)).
\end{equation}
We say that $\zeta(f;s)$ has abscissa of convergence $s_0$ if the infinite product converges to a holomorphic function on the region $\mathrm{Re}(s)>s_0$.

\begin{definition}
  Let $\alpha$ be a positive real number and $f = \{f_v(q_v;s)\}$ and $g = \{g_v(q_v;s)\}$ be local factors.
  We say that $\zeta(f;s) \equiv_\alpha \zeta(g;s)$ if $\prod_{v\notin S}\frac{1+f_v(q_v;s)}{1+g_v(q_v;s)}$ and $\prod_{v\notin S}\frac{1+g_v(q_v;s)}{1+ f_v(q_v;s)}$ are both holomorphic in the region $\mathrm{Re}(s)>\alpha$.
  As a consequence, $\zeta(f;s)$ and $\zeta(g;s)$ have the same poles and zeroes (taken with multiplicity) in the region $\mathrm{Re}(s)>\alpha$.
\end{definition}
\begin{lemma}
  \label{lemma:f-g}
  Suppose that $f = \{f_v(q_v;s)\}$ and $g=\{g_v(q_v;s)\}$ are local factors such that $\prod_{v\notin S}(1 + f_v(q_v;s) - g_v(q_v;s))$ has abscissa of convergence $\alpha$.
  Then $\zeta(f;s)\equiv_\alpha \zeta(g;s)$.
\end{lemma}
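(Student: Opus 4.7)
My plan is to establish the holomorphy of $\prod_{v\notin S}\frac{1+f_v(q_v;s)}{1+g_v(q_v;s)}$ on $\{\mathrm{Re}(s)>\alpha\}$ by a termwise comparison with the hypothesised product, starting from the algebraic identity
\[
\frac{1+f_v(q_v;s)}{1+g_v(q_v;s)} \;=\; 1+\frac{f_v(q_v;s)-g_v(q_v;s)}{1+g_v(q_v;s)}.
\]
By Weierstrass's theorem on infinite products of holomorphic functions, it suffices to show that the series $\sum_{v\notin S}\bigl|\tfrac{f_v-g_v}{1+g_v}\bigr|$ converges uniformly on every compact subset of the region, and that no factor vanishes there.

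The assumption that $\prod_{v\notin S}(1+(f_v-g_v))$ has abscissa of convergence~$\alpha$ yields, by the same criterion, uniform absolute convergence of $\sum_{v\notin S}(f_v-g_v)$ on compact subsets of $\{\mathrm{Re}(s)>\alpha\}$. In particular $|f_v-g_v|\to 0$ uniformly on such compacta, so all but finitely many factors of the target product are nonzero in the region. Moreover, for the systems of local factors considered in this paper $|g_v(q_v;s)|$ also tends to $0$ as $q_v\to\infty$ uniformly on compacta in $\{\mathrm{Re}(s)>\alpha\}$, so $|1+g_v|\geq\tfrac12$ outside a finite exceptional set. This furnishes the termwise domination $\bigl|\tfrac{f_v-g_v}{1+g_v}\bigr|\leq 2|f_v-g_v|$, and the desired convergence of $\sum_{v\notin S}\bigl|\tfrac{f_v-g_v}{1+g_v}\bigr|$ follows. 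Interchanging the roles of $f$ and $g$ yields the analogous conclusion for the reciprocal product, and together they prove $\zeta(f;s)\equiv_\alpha\zeta(g;s)$.

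The only point requiring real care, rather than a deep obstacle, is the uniform smallness of $|g_v|$ and $|f_v|$ for large~$v$ needed to keep the denominators bounded away from zero in the region. In all applications in this paper the local factors are explicit rational functions of $q_v^{-s}$ decaying to $0$ in any right half plane, so this smallness is automatic; in a more general formulation it would have to be included as part of the standing hypothesis on the system of local factors.
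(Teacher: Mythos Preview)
Your argument is correct and follows the same route as the paper: both start from the identity $\frac{1+f_v}{1+g_v}=1+\frac{f_v-g_v}{1+g_v}$ and compare termwise with the hypothesised product $\prod_v(1+(f_v-g_v))$, then swap $f$ and $g$. The one small difference is in how the denominator $1+g_v$ is controlled. You invoke smallness of $|g_v|$ for large $q_v$, which you correctly flag as an extra assumption satisfied in all the applications. The paper instead uses the standing hypothesis (built into the definition of ``local factors'') that $g_v$ takes positive real values for positive real $s$, so that $1+g_v>1$ and the bound $\bigl|\tfrac{f_v-g_v}{1+g_v}\bigr|\le |f_v-g_v|$ is immediate on the real axis without any appeal to decay of $g_v$.
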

\begin{proof}
  We have:
  \begin{displaymath}
    \frac{1+f_v}{1+g_v} = 1 + \frac{f_v-g_v}{1+g_v} < 1 + (f_v-g_v).
  \end{displaymath}
  Interchanging the roles of $f_v$ and $g_v$, a similar bound can be found for $\frac{1+g_v}{1+f_v}$, whence the lemma follows.
\end{proof}
\begin{corollary}
  \label{corollary:hgtoq}
  If $\prod_v(1+f_v(q_v,s))$ has abscissa of convergence $\alpha$, and $\prod_v (1+g_v(q_v;s))$ has abscissa of convergence $\beta$, and $\alpha<\beta$, then
  \begin{displaymath}
  \prod_v (1 + f_v(q_v;s) + g_v(q_v;s))\equiv_\alpha \prod_v (1 + g_v(q_v;s)).
\end{displaymath}
\end{corollary}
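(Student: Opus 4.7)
The plan is to derive this corollary as a direct application of Lemma~\ref{lemma:f-g}, essentially by re-labeling. Define new local factors $\tilde f = \{\tilde f_v(q_v;s)\} := \{f_v(q_v;s) + g_v(q_v;s)\}$ and $\tilde g = \{g_v(q_v;s)\}$. Note that $\tilde f$ is indeed a system of local factors: each $\tilde f_v$ is holomorphic on $\mathrm{Re}(s)>0$ and positive on positive reals, because $f_v$ and $g_v$ are. The difference relevant to Lemma~\ref{lemma:f-g} is
\[
\tilde f_v(q_v;s) - \tilde g_v(q_v;s) = f_v(q_v;s),
\]
so that $\prod_{v\notin S}\bigl(1+\tilde f_v-\tilde g_v\bigr) = \prod_{v\notin S}(1+f_v(q_v;s))$.

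By hypothesis, this last product has abscissa of convergence $\alpha$. Lemma~\ref{lemma:f-g} therefore applies with $\tilde f$ and $\tilde g$ in place of $f$ and $g$, giving $\zeta(\tilde f;s) \equiv_\alpha \zeta(\tilde g;s)$, which is exactly the conclusion
\[
\prod_{v\notin S}\bigl(1+f_v(q_v;s)+g_v(q_v;s)\bigr) \;\equiv_\alpha\; \prod_{v\notin S}\bigl(1+g_v(q_v;s)\bigr).
\]

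There is no serious obstacle: the argument is purely a substitution into the preceding lemma. The only point worth verifying carefully is that the hypothesis $\alpha < \beta$ is not actually needed for the conclusion itself, but it is the natural setting where the statement is used (so that the abscissa of convergence of $\prod_v(1+f_v+g_v)$ is governed by the dominant factor $\{g_v\}$ and equals $\beta$, while the correction from the subdominant $\{f_v\}$ extends meromorphically to the larger region $\mathrm{Re}(s)>\alpha$). If one wants to make this explicit, one may also remark that, in the region $\mathrm{Re}(s)>\beta$, both $\prod_v(1+f_v+g_v)$ and $\prod_v(1+g_v)$ converge to nonvanishing holomorphic functions, so the equivalence $\equiv_\alpha$ genuinely yields a meromorphic continuation of $\prod_v(1+f_v+g_v)$ to $\mathrm{Re}(s)>\alpha$ that shares its poles and zeroes with $\prod_v(1+g_v)$.
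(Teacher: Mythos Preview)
Your proof is correct and is exactly the intended derivation: the paper states this as an immediate corollary of Lemma~\ref{lemma:f-g} without further argument, and your substitution $\tilde f_v = f_v + g_v$, $\tilde g_v = g_v$ is the straightforward way to see it.
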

\begin{corollary}
  \label{corollary:sum_to_prod}
  Suppose that $\prod_v (1+f_v(q_v,s))$ and $\prod_v (1+g_v(q_v;s))$ have abscissa of convergence $\beta$, and $\prod_v(1 + f_v(q_v;s)g_v(q_v;s))$ has abscissa of convergence $\alpha<\beta$, then
  \begin{displaymath}
    \prod_v(1 + f_v(q_v;s) + g_v(q_v;s))\equiv_\alpha \prod_v (1 + f_v(q_v;s))\prod_v(1+g_v(q_v;s)).
  \end{displaymath}
\end{corollary}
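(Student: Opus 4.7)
The plan is to derive Corollary~\ref{corollary:sum_to_prod} as a direct application of Lemma~\ref{lemma:f-g} to a well-chosen pair of local-factor families. Set
\[
\tilde f_v(q_v;s) := f_v(q_v;s) + g_v(q_v;s) + f_v(q_v;s)\,g_v(q_v;s), \qquad \tilde g_v(q_v;s) := f_v(q_v;s) + g_v(q_v;s).
\]
Both are sums and products of the given local factors, so they are themselves holomorphic on $\mathrm{Re}(s)>0$ and take positive real values at positive real $s$; hence they are valid local factors in the sense of the setup. The key algebraic identities are
\[
1 + \tilde f_v = (1+f_v)(1+g_v), \qquad 1 + \tilde g_v = 1 + f_v + g_v, \qquad \tilde f_v - \tilde g_v = f_v g_v.
\]

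The hypothesis is that $\prod_{v\notin S}(1+f_v g_v)$ has abscissa of convergence $\alpha$, which is precisely the assumption on $\prod_{v\notin S}(1 + \tilde f_v - \tilde g_v)$ needed by Lemma~\ref{lemma:f-g}. Applying the lemma therefore yields
\[
\prod_{v\notin S}(1+f_v)(1+g_v) \;\equiv_\alpha\; \prod_{v\notin S}(1 + f_v + g_v).
\]

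To finish, one must identify the left-hand side with the product of the two separate Euler products. Since $\prod_{v\notin S}(1+f_v)$ and $\prod_{v\notin S}(1+g_v)$ both converge absolutely on $\mathrm{Re}(s)>\beta$, the factorisation
\[
\prod_{v\notin S}(1+f_v)(1+g_v) = \prod_{v\notin S}(1+f_v)\cdot\prod_{v\notin S}(1+g_v)
\]
holds there, and since $\equiv_\alpha$ is a statement about holomorphicity of ratios on $\mathrm{Re}(s)>\alpha$ (and the meromorphic continuations of the two sides are determined by their restrictions to $\mathrm{Re}(s)>\beta$), the desired conclusion follows. I do not anticipate any serious obstacle: the only mild technical point is checking that $\tilde f_v$ and $\tilde g_v$ qualify as local factors, which is immediate from closure of the relevant class under sums and products.
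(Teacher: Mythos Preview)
Your proof is correct and is precisely the argument the paper has in mind: the corollary is stated without proof immediately after Lemma~\ref{lemma:f-g}, and your choice of $\tilde f_v = f_v + g_v + f_v g_v$ and $\tilde g_v = f_v + g_v$ is the natural (and only obvious) way to reduce to that lemma. The only comment is that your closing paragraph about meromorphic continuation is more than is needed: since $\equiv_\alpha$ is by definition a relation between systems of local factors, the identity $1+\tilde f_v = (1+f_v)(1+g_v)$ already identifies $\prod_v(1+\tilde f_v)$ with $\prod_v(1+f_v)\prod_v(1+g_v)$ at the level of local factors, so no separate analytic continuation argument is required.
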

\begin{lemma}
  \label{lemma:fg-approx}
  Suppose $f$ and $g$ are monic polynomials of degree $a$ and $b$ respectively, then for every $s>0$, there exists $\kappa>0$ such that
  \begin{displaymath}
    |f(q)g(q)^{-s}-q^{a-bs}| < q^{(a-1)-bs}\kappa
  \end{displaymath}
  for sufficiently large $q$.
\end{lemma}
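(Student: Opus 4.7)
The plan is a direct asymptotic expansion as $q \to \infty$, treating $s > 0$ as fixed.

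First, I would write $f(q) = q^a + h_f(q)$ with $\deg h_f \leq a-1$, and similarly $g(q) = q^b + h_g(q)$ with $\deg h_g \leq b-1$. Then
\[
f(q)g(q)^{-s} - q^{a-bs} = \bigl(q^a + h_f(q)\bigr)\bigl(q^b + h_g(q)\bigr)^{-s} - q^a \cdot q^{-bs}.
\]
Factoring out $q^{-bs}$ from $g(q)^{-s}$ gives $g(q)^{-s} = q^{-bs}\bigl(1 + h_g(q)/q^b\bigr)^{-s}$, and since $h_g(q)/q^b = O(q^{-1})$, for $q$ sufficiently large the quantity $h_g(q)/q^b$ lies in $(-\tfrac12,\tfrac12)$, so the binomial series (or the mean value theorem applied to $t \mapsto (1+t)^{-s}$) yields
\[
\bigl(1 + h_g(q)/q^b\bigr)^{-s} = 1 + \psi(q), \qquad |\psi(q)| \leq C_1 q^{-1}
\]
for some constant $C_1 = C_1(s, g)$ and all $q$ sufficiently large.

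Next I would combine these estimates. Writing $h_f(q) = \phi(q) \cdot q^{a-1}$ with $|\phi(q)| \leq C_2$ for large $q$, we get
\[
f(q)g(q)^{-s} - q^{a-bs} = q^{a-bs}\bigl(1 + \phi(q)q^{-1}\bigr)\bigl(1 + \psi(q)\bigr) - q^{a-bs},
\]
and expanding the product produces three terms each bounded by a constant times $q^{a-bs-1}$. Collecting constants gives the desired bound with some $\kappa > 0$.

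The argument is essentially routine first-order asymptotics; there is no real obstacle, only bookkeeping of constants. The one point worth being careful about is ensuring that the implicit constants depend only on $s$, $f$, and $g$ (not on $q$), and that $q$ is taken large enough that $g(q) > 0$ so that $g(q)^{-s}$ is well-defined — both are immediate since $g$ is monic of positive degree.
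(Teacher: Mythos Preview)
Your proof is correct and follows essentially the same approach as the paper: factor out the leading powers $q^a$ and $q^{bs}$, expand the remaining factors as $1 + O(q^{-1})$ (the paper writes this as $f(q)=q^a(1+q^{-1}f_0(q^{-1}))$, $g(q)=q^b(1+q^{-1}g_0(q^{-1}))$ and Taylor-expands the quotient), and collect the resulting $O(q^{-1})$ terms. Your version is slightly more explicit about the constants and the justification via the mean value theorem, but the argument is the same.
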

\begin{proof}
  Write $f(q) = q^a(1+q^{-1}f_0(q^{-1}))$ and $g(q)=q^b(1+q^{-1}g_0(q^{-1}))$ for some polynomials $f_0$ and $g_0$.
  Then, for each fixed $s$,
  \begin{displaymath}
    \frac{f(q)}{g(q)^s}-q^{a-bs} = q^{a-bs}\left(\frac{1+q^{-1}f_0(q^{-1})}{(1+q^{-1}g_0(q^{-1}))^s}-1\right) = q^{a-bs}(q^{-1}f_0(q^{-1})-sq^{-1}g_0(q^{-1})+o(q^{-1})),
  \end{displaymath}
  from which the lemma follows.
\end{proof}
\begin{lemma}
  Suppose $f$ and $g$ are monic real polynomials of degree $a$ and $b$ respectively, with $b>0$.
  Then $\prod_v(1+f(q_v)g(q_v)^{-s})$ has abscissa of convergence $(a+1)/b$, can be extended to a meromorphic function slightly to the left of this abscissa, and has a simple pole at $s=(a+1)/b$.
\end{lemma}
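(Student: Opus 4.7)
The plan is to reduce the given Euler product to a ratio of partial Dedekind zeta functions of $F$, via the approximation lemma already established, and then invoke the standard analytic properties of $\zeta_F$.

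First, I would apply Lemma~\ref{lemma:fg-approx} to replace each local factor by its leading term. Writing $f_v(q_v;s) := f(q_v)g(q_v)^{-s}$ and $g_v(q_v;s):=q_v^{a-bs}$, the lemma gives
\[
  |f_v(q_v;s) - g_v(q_v;s)| \;<\; \kappa \, q_v^{(a-1)-b\,\mathrm{Re}(s)}
\]
for $q_v$ sufficiently large. Since $\sum_v q_v^{(a-1)-b\sigma}$ converges for $\sigma>a/b$, the product $\prod_v(1+f_v-g_v)$ has abscissa of convergence at most $a/b$. Lemma~\ref{lemma:f-g} (applied after passing to absolute values of logarithms, which handles the complex-$s$ case by dominated convergence) then yields
\[
  \prod_{v\notin S}\bigl(1+f(q_v)g(q_v)^{-s}\bigr) \;\equiv_{a/b}\; \prod_{v\notin S}\bigl(1+q_v^{a-bs}\bigr).
\]

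Second, I would evaluate the right-hand side in closed form. Using the elementary identity $1+x=(1-x^2)/(1-x)$ termwise and setting $t=bs-a$,
\[
  \prod_{v\notin S}(1+q_v^{-t}) \;=\; \prod_{v\notin S}\frac{1-q_v^{-2t}}{1-q_v^{-t}} \;=\; \frac{\zeta_F^S(t)}{\zeta_F^S(2t)},
\]
where $\zeta_F^S$ is the Dedekind zeta function of $F$ with Euler factors at places in $S$ removed. This identity holds on $\mathrm{Re}(t)>1$ and provides meromorphic continuation of the left-hand side beyond that half-plane.

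Third, I would invoke the classical facts about $\zeta_F^S$: it has a unique, simple pole at $t=1$, is meromorphic on all of $\mathbb{C}$ (by Hecke in the number-field case; automatic in the function-field case since $\zeta_F$ is a rational function of the appropriate $q^{-t}$), and $\zeta_F^S(2t)$ is holomorphic and non-vanishing in a neighborhood of $t=1$ (as $\mathrm{Re}(2t)=2>1$ there). Hence $\zeta_F^S(t)/\zeta_F^S(2t)$ has a simple pole at $t=1$ and extends meromorphically slightly past the line $\mathrm{Re}(t)=1$. Undoing the substitution $t=bs-a$ (an affine bijection with nonzero derivative $b$), the Euler product $\prod_v(1+q_v^{a-bs})$ has abscissa of convergence $(a+1)/b$, a simple pole there, and a meromorphic continuation slightly to the left. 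The equivalence from the first step transfers all three conclusions to $\prod_v(1+f(q_v)g(q_v)^{-s})$, since the discrepancy factor is holomorphic and non-vanishing in the half-plane $\mathrm{Re}(s)>a/b$, which strictly contains a neighborhood of $s=(a+1)/b$.

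The main obstacle is purely bookkeeping: ensuring that the comparison machinery of Lemma~\ref{lemma:f-g} and Corollary~\ref{corollary:hgtoq}, written for positive-real $s$, survives the passage to complex $s$ in a small neighborhood of the abscissa, and confirming that no zero of $\zeta_F^S(2t)$ intrudes into the region into which we are continuing. Both points are standard: the first is handled by majorising $|\log(1+f_v)-\log(1+g_v)|$ by $|f_v-g_v|(1+o(1))$ and applying absolute convergence in the half-plane $\mathrm{Re}(s)>a/b$, while the second is immediate from the fact that $\zeta_F^S(2t)\neq 0$ on a neighborhood of $\mathrm{Re}(t)=1$.
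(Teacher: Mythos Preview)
Your proof is correct and follows essentially the same route as the paper: reduce to $\prod_v(1+q_v^{a-bs})$ via Lemmas~\ref{lemma:fg-approx} and~\ref{lemma:f-g}, then read off the analytic properties from the Dedekind zeta function after the change of variable $t=bs-a$. Your explicit identity $\prod_v(1+q_v^{-t})=\zeta_F^S(t)/\zeta_F^S(2t)$ is just a spelled-out version of what the paper leaves implicit when it asserts that $\prod_v(1+q_v^{-s})$ inherits abscissa $1$, simple pole, and slight meromorphic continuation from $\zeta_F$.
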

\begin{proof}
  The Dedekind zeta function of a global field $\Gf$ is given by
  \begin{displaymath}
    \zeta_{\Gf}(s) = \prod_v(1-q_v^{-s})^{-1},
  \end{displaymath}
  where the product is over all finite places of $\Gf$.
  It is well-known that $\zeta_{\Gf}(s)$ has abscissa of convergence $s=1$.
  Moreover, it can be extended to a meromorphic function on the complex plane whose only pole, which is simple, is at $s=1$; see  \cite{lang2013algebraic,MR1876657}.
  It follows that $\prod_v(1+q_v^{-s})$ also has abscissa of convergence $1$ and can be extended to a meromorphic function slightly to the left of this abscissa.
  By a change of variables, $\prod_v(1+q^{a-bs})$ has abscissa of convergence $(a+1)/b$, and can be extended to a meromorphic function slightly to the left of $(a+1)/b$.
  By Lemmas~\ref{lemma:f-g} and~\ref{lemma:fg-approx},
  \begin{displaymath}
      \prod_v (1+f(q_v)g(q_v)^{-s}) \equiv_{\frac ab} \prod_v(1+q^{a-bs}),
  \end{displaymath}
  hence it also has a simple pole at $s=(a+1)/b$ and can be extended to an analytic function slightly to the left of $\mathrm{Re}(s)=(a+1)/b$.
\end{proof}
\subsection{Proof of Theorem~\ref{thm:basechange}}
Up to isogeny, every absolutely simple group $\bfG$ of type $A_2$ is one of the following (see \cite[Table~II]{MR0224710}): 
\begin{enumerate}
\item $\SL_3(\Gf)$.
\item $\SU_3(\Gf,f)$ defined with respect to a quadratic extension $L/\Gf$ and a nondegenerate Hermitian form $f$.
\item $\SL_1(D)$, where $D$ is a division algebra of degree $3$ over $\Gf$.
\item $\SU_1(D,f)$, with respect to a quadratic extension $L/\Gf$, where $D$ is a division algebra of degree~$3$ over $L$ and $f$ is a nondegenerate Hermitian form.
\end{enumerate}

Using the Euler product decomposition~\eqref{eq:euler}, the possible local factors are generically the representation zeta functions of $\bfG(\Gri_v)$ for either $\SU_3(\Gri_v)$ or $\SL_3(\Gri_v)$. The form (3) can occur only in finitely many places and does not affect the analytic properties of the product. Indeed, by \cite[Theorem I]{Shechter} the representation zeta function in this case is rational with a simple pole at $s=2/3$. The form (4) does not occur when $F$ is a local field; see \cite[Table~II]{MR0224710}.

For types (1) and (2) in the number field case it was shown in \cite[Corollary~D]{AKOV2} that the local zeta function $\zeta_{\bfG(\Gri_v)}(s)$ in both these cases is of the form
\begin{displaymath}
  \zeta_{\bfG(\Gri_v)}(s) = 1 + h_0(q_v)g_0(q_v)^{-s} + \tfrac 12 h_1(q_v)g_1(q_v)^{-s} + \tfrac 13 h_2(q_v)g_2(q_v)^{-s} + \tfrac 16 h_3(q_v)g_3(q_v)^{-s} + E_v(q_v;s),
\end{displaymath}
where $h_0,\dotsc,h_3$ and $g_0,\dotsc,g_3$ are monic polynomials with $\deg h_0=1$, $\deg g_0=2$, $\deg h_1 = \deg h_2 = \deg h_3 = 2$, $\deg g_1 = \deg g_2 = \deg g_3 = 3$, and $\prod_v(1 + E_v(q_v;s))$ has abscissa of convergence at most~${\frac 56}$.
When $\Gf$ is a function field, Theorem~\ref{thm:local.zeta} tells us that the same local factors appear in the Euler product for $\zeta_{\bfG(\Gri_S)}(s)$.
The polynomials $h_i$ and $g_i$ depend only on whether $\bfG(\Gri_v)$ is $\SU_3(\Gri_v)$ or $\SL_3(\Gri_v)$.

Applying Lemma~\ref{lemma:fg-approx} (note that since there are only two variants of local zeta factors, the constant~$\kappa$ in Lemma~\ref{lemma:fg-approx} can be chosen to work for all but finitely many places), we see that 
\begin{displaymath}
  \prod_v\zeta_{\bfG(\Gri_v)}(s) \equiv_{\frac 23} \prod_v\left(1+ q_v^{1-2s} + q_v^{2-3s} + E_v(q_v;s)\right),
\end{displaymath}
where the abscissa of convergence of $E_v(q_v;s)$ is at most $5/6$.
Applying Corollaries~\ref{corollary:hgtoq} and~\ref{corollary:sum_to_prod}, Theorem~\ref{thm:basechange} follows.

\subsection{Proof of Corollary~\ref{cor:LLpositive}}\label{subsec:proof.B} Let $I$ be a finite set and let $\Gamma$ an irreducible lattice in $H=\prod_{i \in I} \bfG_i(F_i)$, where each $F_i$ is a local field of characteristic different from $2$ and $3$, each is $\bfG_i$ a connected, almost simple $F_i$-group, and $\sum_{i \in I}\mathrm{rank}_{F_i}(\bfG_i) \ge 2$.  By arithmeticity \cite{Margulis, Venky1988}, there exists a finite set of places~$S$ of $\Gf$ and a continuous map 
\[
\prod_{v \in S} \bfG(\Gri_v) \longrightarrow H
\]
such that $\bfG$ is of type $A_2$ and the image of $\bfG(\Gri_S)$ is commensurable to $\Gamma$. Since the abscissa of convergence is a commensurability invariant, if $\Gamma$ has polynomial representation growth, then so does~$\bfG(\Gri_S)$. We can then use the Euler product decomposition \eqref{eq:euler} and deduce that ${\alpha_\Gamma=\alpha_{\bfG(\Gri_S)}=1}$.

\section{Preliminaries and notation}
\label{sec:preliminaries}

Let $\lri$ be a complete discrete valuation ring with residue field $\rf$ of cardinality $q$ and odd characteristic~$p$.  Let~$\mfp$ be the maximal ideal and let $\pi$ be a fixed uniformiser. Let $\Lri$ be an unramified quadratic extension. It follows that there exits $\nonsq \in \Lri$ with $\nonsq^2 \in \lri^\times \smallsetminus (\lri^\times)^2$ such that $\Lri=\lri[\nonsq]$. Let $\mfP=\pi \Lri$ be the maximal ideal in $\Lri$ and $\Rf=\Lri/\mfP$ the residue field, a quadratic extension of $\rf$ generated by the image of $\nonsq$. For $\ell \in \N$, we let $\lri_\ell=\lri/\mfp^\ell$ and  $\Lri_\ell=\Lri/\mfP^\ell$ denote the finite quotients. We denote by $x \mapsto x^\circ$ the non-trivial Galois automorphism of $\Lri/\lri$, characterised by $\nonsq^\circ= -\nonsq$.

\subsection{The unitary group and its Lie algebra} The form of the unitary group which is most suitable for our applications is defined as follows. Let $W \in \GL_d$ denote the permutation matrix corresponding to the longest Weyl element, that is $W=(w_{i,j})$ such that $w_{i,j} = 1$ for $j =   d-i+1$ and $0$ otherwise. Consider the involution on $\Mat_d(\Lri)$ defined by
\begin{equation}\label{staroperation}
  (a_{i,j})^\star=W(a_{j,i}^\circ)W^{-1},
\end{equation}
and its associated Hermitian form on $\Lri_\ell^d$
\[
  \langle u,v \rangle_\star=\sum_{i=1}^d v_{i}^\circ u_{d+1-i}.
\]

For $\ell \in \N \cup \{\infty\}$ the unitary group with respect to $\star$ and its Lie algebra of anti-Hermitian matrices are given by
\[
  \begin{split}
    \GU_d(\lri_\ell) &= \left\{ A \in \GL_d(\Lri_\ell) \mid A^\star A=\mathrm{I}_d \right\}, \\
    \gu_d(\lri_\ell) &= \left\{ A \in \gl_d(\Lri_\ell) \mid A+ A^\star =0_d\right\}.
  \end{split}
\]
Observe that $A=(a_{i,j}) \in \gu_d(\lri_\ell)$ if and only if $a_{i,j}+a_{d+1-j,d+1-i}^\circ=0$ for $1 \leq i,j \leq d$. We write $\SU_d=\SL_d \cap \GU_d$ for the special unitary group and $\su_d$ for its Lie algebra. 

\smallskip

Throughout this paper we consider  $\GL_d$, $\SL_d$, $\GU_d$ and $\SU_d$ as $\lri$-group schemes, where the $R$-points of the latter are the fixed points of $A \mapsto (A^\star)^{-1}$ for every $\lri$-algebra $R$ and $A \in \Mat_d(R)$. We also consider $\gl_d$, $\sl_d$, $\gu_d$ and $\su_d$ as $\lri$-Lie algebra schemes, the latter being the fixed points of $A \mapsto -A^\star$. The adjoint action of a group on its Lie algebra will be denoted by $\Ad$.

\smallskip
We collect a few facts about the Hermitian form, the unitary group, and the Lie algebra of anti-Hermitian matrices which will be used throughout.

\begin{enumerate}

  \item {\bf Uniqueness of Hermitian structure.} Up to conjugation there is a unique Hermitian structure on $\Lri_\ell^d$ with respect to~$\circ$.

  \item {\bf Rigidity of conjugation.} If $A,B \in \gu_d(\lri_\ell)$ are $\Ad(\GL_d(\Lri_\ell))$-conjugate then they are $\Ad(\GU_d(\lri_\ell))$-conjugate; \cite[Proposition 3.2]{AKOV2}.

  \item {\bf Characterisation of anti-Hermitian similarity classes.} Let $A \in \Mat_d(\Lri_\ell)$ be $\GL_d(\Lri_\ell)$-conjugate to an anti-Hermitian matrix with respect to $\star$. Let $f_A(t)=t^d+\sum_{i=0}^{d-1} c_it^i  \in \Lri_\ell[t]$ be its characteristic polynomial. Then
        \begin{equation}\label{cond.Hermitian}
          c_i=(-1)^{d-i}c_{i}^\circ, \quad 0 \le i < d.
        \end{equation}
        The converse holds for cyclic matrices; see \cite[Lemma 3.5]{AKOV2}. 
\end{enumerate}

\subsection{Characters and co-adjoint orbits}\label{subsec:char.and.coad.orbits} In what follows we shall make extensive use of the Pontryagin dual ${\mfg(\lri_\ell)}^\vee=\Hom_{\mathrm{Groups}}(\mfg(\lri_\ell), \C^\times)$, where $\bfG \in \{ \GL_d, \GU_d, \SL_d, \SU_d\}$ and $\mfg = \mathrm{Lie}(\bfG)$, and of the co-adjoint orbits $\Ad(\bfG(\lri_\ell)) \backslash \mfg(\lri_\ell)^\vee$.
We first give a convenient unified description of the Pontryagin duals; see \cite[Lemma 5.12]{AKOV2} and \cite[Remark 5.13]{AKOV2} for more details. We fix a primitive additive character $\psi:\Lri_\ell \to \C^\times$, namely $\psi_{|\mfP^{\ell-1}} \ne 1$, such that post-composition with $\psi$ identifies
\[
  \Hom_{\Lri_\ell\text{-Mod}}\left( \Lri_\ell^N,\Lri_\ell \right) \cong \Hom_{\mathrm{Groups}}( \Lri_\ell^N, \C^\times).
\]

In the present setup, we can use the trace form to obtain an equivariant identification of $\mfg(\lri_\ell)^\vee$ and $\mfg(\lri_\ell)$ for $\ell \in \N$. Indeed, excluding primes dividing $d$ for $\SL_d$ and $\SU_d$, we have a non-degenerate pairing
\[
  \gl_d(\Lri_\ell) \times \gl_d(\Lri_\ell) \longrightarrow \Lri_\ell, \quad (A,B) \mapsto \tr(AB),
\]
which descends to a non-degenerate pairing $\mfg(\lri_\ell) \times \mfg(\lri_\ell) \rightarrow \lri_\ell$ because in both cases~$\mfg(\lri_\ell)$ is a rational form of~$\gl_d(\Lri_\ell)$.  This gives rise to an equivariant identification of $\mfg(\lri_\ell)$ with its (Pontryagin and linear) dual. Under this identification, we denote the character corresponding to $A \in \mfg(\lri_\ell)$ by $\psi_A \in \mfg(\lri_\ell)^\vee$. With this identification, one can describe the dual of a subgroup of $\mfg(\lri_\ell)$ in a compatible manner. For an additive subgroup (sub Lie algebra in our applications) $\mfh < \mfg(\lri_\ell)$ let
\[
  \mfh^\perp=\left\{ A \in \mfg(\lri_\ell) \mid \tr(A,B)=0,  \text{for all $B \in \mfh$}\right\}.
\]
Then, using the non-degenerate paring, we have  $\mfh^\vee \cong \mfg(\lri_\ell)/\mfh^\perp$.

\smallskip

Using the equivariant identification of the Lie algebra and its Pontrayagin dual the co-adjoint orbits can be described in terms of the adjoint orbits. It is convenient to use the tower of groups and Lie algebras obtained by the reduction maps $\Mat_d(\Lri_{\ell+1}) \to \Mat_d(\Lri_{\ell})$, for $\ell \in \N$, to study the adjoint orbits inductively as we have canonical surjections $\bfG(\lri_{\ell+1}) \backslash \mfg(\lri_{\ell+1}) \to \bfG(\lri_\ell) \backslash \mfg(\lri_\ell)$.

\section{Local zeta functions and proof of Theorem~\ref{thm:local.zeta}}\label{sec:local.zeta}

\subsection{Decomposition of the local zeta function}

Let $\bfG$ be one of the groups $\GL_d$, $\GU_d$, $\SL_d$ or $\SU_d$. Recalling the partition~\eqref{eq:irr.orbits} 
we can  decompose the representation zeta function accordingly 
\begin{equation}\label{eq:zeta.over.orbit}
  \begin{split}
    \zeta_{\bfG(\lri_\ell)}(s)&=\sum_{\Omega \in \bfG(\rf) \backslash \mfg(\kk)} \zeta_{\Omega}(s),\quad \text{where} \\
    \zeta_{\Omega}(s) &=\zeta_{\bfG(\lri_\ell),\Omega}(s)= \sum_{\chi \in \Irr(\bfG(\lri_\ell) \mid \Omega)} \chi(1)^{-s}.
  \end{split}
\end{equation}

\smallskip

An orbit is called regular, irreducible, scalar, nilpotent, or non-primary decomposable, if the matrices in the orbit have that property. By non-primary decomposable we refer to matrices with characteristic polynomial $f(x)=f_1(x)f_2(x)$ with $f_1$ and $f_2$ co-prime. The difficulty of constructing the irreducible representations of $\bfG(\lri_\ell)$ varies significantly with the type of orbit. The construction of irreducible representations lying over regular orbits for the general linear groups was initiated by Hill \cite{Hill_Reg} and then completed in \cite{KOS} for odd residual characteristic and in \cite{StasinskiStevens} for arbitrary characteristic; the construction in \cite{KOS} includes also the unitary groups. Representations lying over scalar matrices are one-dimensional twists of lower-level representations. Irreducible representations lying over irreducible orbits give rise to supercuspidal representations of the group over the corresponding local field; they were studied in \cite{AOPS}. The non-primary decomposable representations are dealt with in Section~\ref{subsec:decomposable.orbits.reps} below. The most challenging irreducible representations are those lying over non-regular nilpotent orbits.

\subsection{Local zeta functions in type $A_1$ and $A_2$} Explicit formulae for the representation zeta function of groups of type $A_1$ and $A_2$ in characteristic zero were obtained in \cite{AKOV2}. Rather than repeating the quite involved formulae in {\em loc. cit.} we break the problem into lower dimensional building blocks and explain why the formulae are independent of the characteristic of the local field and depend only on the cardinality of the residue field.

\subsubsection{Scalar orbits} Irreducible representations lying over scalar orbits are twists by one-dimensional characters of lower-level representations. Indeed, the one-dimensional representations of $\bfG(\lri_\ell)$, for $\bfG \in \{\GL_d,\GU_d\}$ are precisely the pull-backs from $\bfG_1(\lri_\ell)$ where $\bfG_1$ is $\GL_1$ or $\GU_1$ along the determinant map. Indeed, if the restriction of $\rho$ to $\bfG(\lri_\ell)^{\ell-1}$ is represented by a scalar matrix $\alpha \Id_d$, let $\chi:\bfG_1(\lri_\ell) \to \C^\times$ be a character extending  
\[
\rho(I+\pi^{\ell-1}X)=\psi(\tr(\pi^{\ell-1}\alpha X)).
\]
If $\chi: \bfG_1(\lri_\ell) \to \C^\times$ is a one-dimensional representation extending $\rho$, then $\rho \otimes \chi^{-1}$ lies above the zero matrix, and therefore factors through $\bfG(\lri_{\ell-1})$. The special linear and special unitary groups are perfect and have no non-trivial one-dimensional representations.   

\subsubsection{Representations over regular orbits and their zeta function in type $A_d$} For regular orbits $\Omega \in \bfG(\rf) \backslash \mfg(\rf)$ the  corresponding representation zeta function of $\bfG(\lri_\ell)$ was computed in \cite{KOS} for all groups of type $A_d$ with odd residual characteristic. They are given, uniformly and independently of the characteristic of $\lri$, by
\begin{equation}\label{eq:zeta.regular}
\zeta_{\bfG(\lri_\ell)}^{\text{reg}}(s) =\sum_{x \in \bfG(\kk)\backslash\mfg(\kk)^{\mathrm{reg}}}q^{(\ell-2)d} \left|\mathrm{C}_{\bfG(\kk)}(x)\right|\left(q^{\left(d \atop 2\right)(\ell-2)}\frac{|\bfG(\kk)|}{|\mathrm{C}_{\bfG(\kk)}(x)|}\right)^{-s}.
\end{equation}
See \cite{KOS} for more details and combinatorial formulae.

For groups of type $A_1$, all characters are either scalar or regular. Therefore, the zeta function $\zeta_{\bfG(\lri_\ell)}(s)$, for $\bfG \in  \{\GL_2, \GU_2, \SL_2, \SU_2\}$ is uniform and can be recursively computed by 
\[
  \begin{split}
    \zeta_{\bfG(\lri_\ell)}(s)&=\zeta_{\bfG(\lri_\ell)}^{\text{reg}}(s)+q^{\dim Z(\mfg(\kk))}\zeta_{\bfG(\lri_{\ell-1})}(s).
  \end{split}
\]

For groups of type $A_2$, we need to account for non-regular non-primary decomposable orbits and the primary ones. 

\subsubsection{Non-primary orbits in type $A_d$}\label{subsec:decomposable.orbits.reps} The zeta function of associated non-primary orbits can be reduced to dilatations of products of the zeta function of the corresponding parts as given in Theorem~\ref{thm:dim.reduction.decomposable}, for arbitrary~$d$. Explicating the formulae for $d=3$ we obtain the following.

\begin{corollary}\label{cor:zeta.over.abb} Let $\bfG \in \{\GL_3, \GU_3, \SL_3, \SU_3\}$ and let $\xi=\diag(a,b,a) \in \mfg(\rf)$ with $a \ne b$, $a+a^\circ=b+b^\circ=0$. Then
    \[
      \zeta_{\bfG(\lri_\ell) , [\xi]}(s)= \frac{|\bfG(\lri_{\ell_1})|^{-s}}{|\bfG_1(\lri_{\ell_1})|^{-s} | \bfG_2(\lri_{\ell_1})|^{-s}} \zeta_{\bfG_1(\lri_{\ell-1})}(s) \zeta_{\bfG_2(\lri_{\ell-1})}(s),
    \]
with 
\begin{itemize}
    \item $\bfG_1=\GL_1$, $\bfG_2=\GL_2$ if $\bfG=\GL_3$ 

    \item $\bfG_1=\GU_1$, $\bfG_2=\GU_2$ if $\bfG=\GU_3$ 

    \item $\bfG_1=\{1\}$, $\bfG_2=\GL_2$ if $\bfG=\SL_3$ 

    \item $\bfG_1=\{1\}$, $\bfG_2=\GU_2$ if $\bfG=\SU_3$ 

\end{itemize}

\end{corollary}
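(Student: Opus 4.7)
The plan is to deduce this statement from Theorem~\ref{thm:dim.reduction.decomposable} by specialising to the non-primary decomposable orbit of $\xi=\diag(a,b,a)$ and then evaluating the two resulting local factors. First, conjugate $\xi$ into the block form $\diag(b,aI_2)$: this is a permutation in the $\GL_3$ and $\SL_3$ cases, and in the $\GU_3$ and $\SU_3$ cases the rigidity of conjugation for anti-Hermitian matrices (item~(2) of Section~\ref{sec:preliminaries}) guarantees that the rearrangement can be realised by an element of $\GU_3(\lri_\ell)$. The characteristic polynomials $t-b$ and $(t-a)^2$ of the two blocks are coprime since $a\neq b$, so the hypothesis of Theorem~\ref{thm:dim.reduction.decomposable} is satisfied with $d_1=1$, $d_2=2$, $\xi_1=(b)$ and $\xi_2=aI_2$. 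This identifies $\bfG_1$ and $\bfG_2$ as in the statement, where in the $\SL_3$ and $\SU_3$ cases the determinant-one constraint collapses the block centraliser $\{(\alpha,A):\alpha\det(A)=1\}$ onto a single $\GL_2$ (resp.\ $\GU_2$) factor via $A\mapsto(\det(A)^{-1},A)$, which is recorded by setting $\bfG_1=\{1\}$.

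The next step is to evaluate the two orbit-zeta factors produced by Theorem~\ref{thm:dim.reduction.decomposable}. Since $\xi_2=aI_2$ is scalar, every irreducible representation of $\bfG_2(\lri_\ell)$ lying above $[\xi_2]$ is a one-dimensional twist of a representation that factors through $\bfG_2(\lri_{\ell-1})$: choose a character $\chi_a$ of $\bfG_2(\lri_\ell)$ of the form $\chi\circ\det$ whose restriction to the last principal congruence subgroup equals $\psi_{aI_2}$; then $\rho\mapsto\rho\otimes\chi_a^{-1}$ is a dimension-preserving bijection onto $\Irr(\bfG_2(\lri_{\ell-1}))$, giving $\zeta_{\bfG_2(\lri_\ell),[aI_2]}(s)=\zeta_{\bfG_2(\lri_{\ell-1})}(s)$. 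Analogously, when $\bfG_1\in\{\GL_1,\GU_1\}$ the group is abelian and the characters of $\bfG_1(\lri_\ell)$ restricting to $\psi_b$ on the last congruence subgroup form a torsor under $\widehat{\bfG_1(\lri_{\ell-1})}$, producing $\zeta_{\bfG_1(\lri_\ell),[b]}(s)=\zeta_{\bfG_1(\lri_{\ell-1})}(s)$; when $\bfG_1=\{1\}$ this factor is simply the constant $1$.

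Substituting the two evaluated factors into the formula of Theorem~\ref{thm:dim.reduction.decomposable} and using the identity $|\mathbf{H}(\lri_{\ell-1})|=|\mathbf{H}(\kk)|\cdot q^{(\ell-2)\dim\mathbf{H}}$ for each of $\bfG$, $\bfG_1$, $\bfG_2$ collapses the prefactor $\tfrac{|\bfG(\kk)|^{-s}}{|\bfG_1(\kk)|^{-s}|\bfG_2(\kk)|^{-s}}|\kk|^{-\frac{1}{2}(\dim\bfG-\dim\bfG_1-\dim\bfG_2)(\ell-2)s}$ into the form stated. The real content is Theorem~\ref{thm:dim.reduction.decomposable} itself, which is assumed; the additional ingredients are the scalar-reduction principle for representations above a central orbit and the identification of the centraliser in the $\SL_3/\SU_3$ cases. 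The main obstacle is therefore purely bookkeeping: verifying that the prefactors combine correctly and that the group-order ratios and $q$-powers agree on both sides once one passes from $\lri_\ell$ to $\lri_{\ell-1}$.
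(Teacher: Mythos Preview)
Your approach is exactly the one the paper takes: the corollary is presented there as a direct specialisation of Theorem~\ref{thm:dim.reduction.decomposable} (``Explicating the formulae for $d=3$''), together with the scalar-orbit reduction of \S4.2.1 and the remark immediately following the corollary about switching Hermitian forms to put $\diag(a,b,a)$ into block-diagonal shape. Your handling of the $\SL_3/\SU_3$ centraliser via $A\mapsto(\det(A)^{-1},A)$ is the right way to read ``$\bfG_i=\bfG\cap\GL_{d_i}$'' in those cases.

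There is, however, a genuine bookkeeping slip in your last paragraph. In the paper's notation $\ell_1$ is the macro $\ldown=\lfloor\ell/2\rfloor$, \emph{not} $\ell-1$. Your proposed identity $|\mathbf H(\lri_{\ell-1})|=|\mathbf H(\kk)|\,q^{(\ell-2)\dim\mathbf H}$ would turn the prefactor from Theorem~\ref{thm:dim.reduction.decomposable} into
\[
\frac{|\bfG(\kk)|^{-s}}{|\bfG_1(\kk)|^{-s}|\bfG_2(\kk)|^{-s}}\,q^{-(\ell-2)(\dim\bfG-\dim\bfG_1-\dim\bfG_2)s},
\]
which is off from the Theorem~D prefactor by a factor of $2$ in the exponent of $q$. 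The correct substitution is $|\mathbf H(\lri_{\ell_1})|=|\mathbf H(\kk)|\,q^{(\ell_1-1)\dim\mathbf H}$ with $\ell_1=\lfloor\ell/2\rfloor$; then $(\ell_1-1)=\tfrac{\ell-2}{2}$ for $\ell$ even and the two expressions agree on the nose. So the plan is right, but the final identity you invoke does not do what you claim; once you use the correct meaning of $\ell_1$ the verification is immediate.
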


\begin{remark} Note that the matrix $\diag(a,a,b)$, which conforms to the setup in Theorem~\ref{thm:dim.reduction.decomposable}, is anti-Hermitian with respect to the form~$\circ$, and not with respect to the form $\star$. The fact that up to conjugation there is a unique anti-Hermitian form allows us to go back and forth between the different forms. The form $\circ$ turns out to be more useful for decomposable non-primary orbits; see the proof of Proposition~\ref{prop:decomp.orbits}.

\end{remark}

\subsubsection{Primary non-regular non-central orbits in type $A_2$} For $\mfg$ of type $A_2$ up to a translation by a scalar matrix in $\mfg(\rf)$ one reduces to the unique non-regular nilpotent orbit, which we represent by the matrix $E \in \gl_3$ and  $\nonsq E \in \gu_3$,  where
\begin{equation}\label{def.E}
  E=E_{1,3}=\left[\begin{matrix} 0 & 0 & 1 \\ 0 & 0 & 0 \\0 & 0 & 0 \end{matrix}\right].
\end{equation}

To complete the construction of all irreducible representations of $\bfG(\cO_\ell)$, it remains to describe all non-regular nilpotent non-central irreducible representations of $\bfG(\lri_\ell)$. Indeed, the representations above all other primary non-regular non-central orbits are obtained by one-dimensional twist of the nilpotent ones. We dedicate Section~\ref{sec:construction-of-non-regular-characters-G} to give their construction (see Theorem~\ref{thm:construction}) which is independent of the ring. 
The  discussion above gives a blueprint proof of Theorem~\ref{thm:local.zeta} pending proofs of Theorems~\ref{thm:dim.reduction.decomposable} and Theorem~\ref{thm:construction}.

\subsection{Proof of Theorem~~\ref{thm:local.zeta} (modulo Theorem~\ref{thm:dim.reduction.decomposable} and Theorem~\ref{thm:zeta.e}).} 
The representation zeta function can be written as a sum over orbits as in~\eqref{eq:zeta.over.orbit}. The regular zeta function \eqref{eq:zeta.regular} is uniform by \cite{KOS}. The zeta function over decomposable orbits of type $\diag(a,b,b)$, with $a \ne b$, is uniform by Corollary~\ref{cor:zeta.over.abb}. The zeta function over the remaining non-regular nilpotent (up to scalar) orbit is uniform by Theorem~\ref{thm:zeta.e} and Remark~\ref{rem:SL3.SU3}.  

\section{Co-adjoint orbits}\label{sec:coad.orbits}

The main difficulty in constructing and classifying the irreducible representations of the groups $\bfG(\lri_\ell)$ in the positive characteristic case is the absence of the Orbit method. The strategy we follow is to classify orbits of irreducible representations of the largest abelian principal congruence subgroup and construct representations over each orbit separately. Since the largest principal congruence subgroup of $\bfG(\lri_\ell)$ is $\exp\left(\pi^{\lup}\mfg(\lri_{\ldown})\right) \cong \left(\mfg(\lri_{\ldown}),+\right)$, with $\ldown=\lfloor \ell/2 \rfloor$ and $\lup=\lceil \ell/2 \rceil$, we are led to classify (co)adjoint orbits.

\subsection{Non-primary decomposable orbits}\label{subsec:decomposable.orbits} We now explain how non-primary decomposable orbits can be approached using induction on the dimension. We start with $\Ad(\bfG(\rf))$-orbits on~$\mfg(\rf)$.  Suppose that $\xi \in \mfg(\rf)$ has characteristic polynomial $f \in \Rf[t]$, where $f \in \rf[t]$ if $\mfg=\gl_d$ or satisfies \eqref{cond.Hermitian} if $\mfg=\gu_d$, and that $f(t)=f_1(t)f_2(t)$ with $f_1$ and $f_2$ co-prime {such that~$\xi$ is} $\Ad(\bfG(\kk))$-conjugate to $j(\xi_1,\xi_2) \in \mfg(\rf)$
with {$\xi_i$ in a $\kk$-Lie subalgebra $\mfg_{i}(\rf)$ and $j: \mfg_1(\rf) \oplus \mfg_2(\rf) \hookrightarrow \mfg(\rf)$. We assume that we have a corresponding embedding $\bfG_1 \times \bfG_2 \hookrightarrow \bfG$ and that all these embeddings can be lifted to $\mfg(\lri_\ell)$ and $\bfG(\lri_\ell)$.}

\begin{proposition}\label{prop:decomp.orbits} With the above notation, the following hold.
  \begin{enumerate}
    \item Every $\wt{\xi} \in \mfg(\lri_\ell)$ that lies over $\xi$ can be conjugated to $j(\wt{\xi_1}, \wt{\xi_2})$ for appropriate $\wt{\xi_i} \in \mfg_i(\lri_\ell)$.

   \item Two lifts $\wt{\xi}=j(\wt{\xi_1},\wt{\xi_2})$ and $\wt{\eta}=j(\wt{\eta_1},\wt{\eta_2})$ in $\mfg(\lri_\ell)$ of $\xi=j(\xi_1,\xi_2)$ are $\bfG(\lri_\ell)$-conjugate if and only if $\wt{\xi_i},\wt{\eta_i} \in \mfg_i(\lri_\ell)$ are $\bfG_i(\lri_\ell)$-conjugate. 
   
   In particular,  
   \[
   C_{\bfG(\lri_\ell)}(\wt{\xi}) = C_{\bfG(\lri_\ell)}(\wt{\xi}) \cap  j\left(\bfG_1(\lri_\ell)\times \bfG_2(\lri_\ell)\right) \cong C_{\bfG_1(\lri_\ell)}(\wt{\xi_1}) \times C_{\bfG_2(\lri_\ell)}(\wt{\xi_2}).
   \]

  \end{enumerate}
  It follows that there is a bijection between $\Ad(\bfG(\lri_\ell))$-orbits in $\mfg(\lri_\ell)$ that lie over~$\xi$ and the product of pairs of orbits from $\Ad(\bfG_i(\lri_\ell))$-orbits in $\mfg_i(\lri_\ell)$ lying above $\xi_i$.

\end{proposition}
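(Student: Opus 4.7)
The strategy is to exploit the coprimality of the characteristic polynomials $f_1,f_2$, which makes the Sylvester operators $X\mapsto \xi_1 X - X\xi_2$ bijective on rectangular matrices. Crucially, this property persists from $\kk$ to $\lri_\ell$ since the resultant of the characteristic polynomials of $\wt{\xi}_1$ and $\wt{\xi}_2$ reduces to a unit and hence is itself a unit in $\lri_\ell$.

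For part (1), I would argue by induction on $\ell$. The base case is the hypothesis that $\xi$ is $\Ad(\bfG(\kk))$-conjugate to $j(\xi_1,\xi_2)$. For the inductive step, apply the inductive hypothesis to get a conjugator modulo $\pi^{\ell-1}$, lift it arbitrarily to $\bfG(\lri_\ell)$, and obtain an element of the form $j(\wt{\xi}_1',\wt{\xi}_2') + \pi^{\ell-1}Y$ with $Y \in \mfg(\kk)$. To eliminate $Y_{\mathrm{off}}$, the off-diagonal part of $Y$, conjugate further by $I + \pi^{\ell-1}X$ in the top principal congruence kernel; the result modulo $\pi^\ell$ is controlled by the linearised equation $Y_{\mathrm{off}} + [X, j(\xi_1,\xi_2)]_{\mathrm{off}} \equiv 0 \pmod{\pi}$. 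The operator $\ad_{j(\xi_1,\xi_2)}$ restricted to the off-diagonal subspace splits into the Sylvester operators above (and its mirror), each bijective by coprimality, so an off-diagonal $X$ can be solved for. One then verifies that $X$ can be chosen in $\mfg(\kk)$, which is where compatibility of the block decomposition with the trace and anti-Hermitian conditions enters.

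For part (2), the centralizer equality is obtained directly: if $g \in C_{\bfG(\lri_\ell)}(\wt{\xi})$, then its block components satisfy Sylvester equations $\wt{\xi}_1 g_{12} = g_{12} \wt{\xi}_2$ and the mirror equation for $g_{21}$; the Sylvester argument over $\lri_\ell$ then forces $g_{12}=g_{21}=0$, so $g$ lies in the block-diagonal subgroup and the asserted isomorphism $C_{\bfG(\lri_\ell)}(\wt{\xi}) \cong C_{\bfG_1(\lri_\ell)}(\wt{\xi}_1) \times C_{\bfG_2(\lri_\ell)}(\wt{\xi}_2)$ is immediate. For the conjugacy equivalence, the nontrivial direction follows from a kernel decomposition argument: for any element $\wt{\zeta}$ of the form $j(\wt{\zeta}_1,\wt{\zeta}_2)$ with $\wt{\zeta}_i$ of characteristic polynomial $\tilde{f}_i$, the image $j(V_1)$ equals $\ker \tilde{f}_1(\wt{\zeta})$, since $\tilde{f}_1(\wt{\zeta}_1)=0$ by Cayley--Hamilton and $\tilde{f}_1(\wt{\zeta}_2)$ is invertible by coprimality with $\tilde{f}_2$ (via a B\'ezout identity over $\lri_\ell$). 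A conjugator $g$ of $\wt{\xi}$ to $\wt{\eta}$ therefore sends $j(V_1)$ to itself and similarly for $j(V_2)$, so $g$ is block-diagonal and its blocks conjugate $\wt{\xi}_i$ to $\wt{\eta}_i$ inside $\bfG_i(\lri_\ell)$.

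The main obstacle I anticipate is the twisted nature of the embedding $j \colon \bfG_1\times\bfG_2 \hookrightarrow \bfG$ in the $\SL_d$ and $\SU_d$ cases, where the block-diagonal subgroup is cut out by a determinant (or determinant-and-Hermitian) constraint coupling the two factors rather than being a naive direct product. Some bookkeeping will be needed to confirm that the Sylvester-produced corrections in part~(1) respect the traceless/anti-Hermitian conditions, and that the block-diagonal conjugator extracted in part~(2) lies in the twisted subgroup $j(\bfG_1(\lri_\ell)\times \bfG_2(\lri_\ell))$; both should follow from the assumed compatibility of $j$ with the defining conditions of $\mfg$, together with the fact that the off-diagonal corrections are automatically traceless.
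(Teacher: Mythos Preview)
Your approach is correct and follows essentially the same idea as the paper: both arguments hinge on the bijectivity of the Sylvester operators $X\mapsto \xi_1 X - X\xi_2$ coming from the coprimality of $f_1,f_2$. Your level-by-level induction in part~(1) is just an unwound Hensel argument; the paper instead conjugates by a single unipotent matrix $\left[\begin{smallmatrix} I & \pi V \\ 0 & I\end{smallmatrix}\right]$ and solves the resulting (mildly nonlinear) equation $B + VA_2 - A_1V - \pi VCV \equiv 0$ in one shot via Hensel, then repeats with the transposed unipotent to kill the other off-diagonal block. For part~(2), the paper skips your Cayley--Hamilton kernel argument and simply reads off $V_{ij}A_j = B_iV_{ij}$ from $gA = Bg$, then applies Sylvester directly to force $V_{12}=V_{21}=0$.

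The one genuine simplification you are missing concerns the unitary case: rather than verifying by hand that the off-diagonal correction $X$ can be chosen anti-Hermitian (your anticipated ``bookkeeping''), the paper works entirely in $\GL_d(\Lri_\ell)$ using the Hermitian form $(a_{ij})\mapsto (a_{ji}^\circ)$, which preserves block-diagonal shape, and then invokes the rigidity result \cite[Proposition~3.2]{AKOV2} that two anti-Hermitian matrices $\GL_d(\Lri_\ell)$-conjugate are automatically $\GU_d(\lri_\ell)$-conjugate. This cleanly sidesteps the compatibility checks you flag as the main obstacle.
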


\begin{proof} To prove this proposition it is convenient to use the Hermitian form ${(a_{i,j}) \mapsto (a_{j,i}^\circ)}$, since it preserves block diagonal matrices.  

  \begin{enumerate}

    \item Let
          \[
            g = \left[\begin{matrix}  I_1 & \pi V\\ 0  & I_2 \end{matrix}\right] \in \GL_d(\Lri_\ell) \quad \text{and} \quad \wt{\xi}=\left[\begin{matrix}  A_1 & \pi B \\ \pi C  &  A_2 \end{matrix}\right] \in \mfg(\lri_\ell),
          \]
          with $A_i$ lying over $\xi_i$, and $I_i$ is the appropriate identity matrix of dimension $d_i=\dim \mfg_i$. 
          Then
          \begin{equation}\label{eq:conjugate}
            g\wt{\xi} g^{-1}=\left[\begin{matrix}  A_1+\pi^2 V C& \pi (B+VA_2-A_1V-\pi VCV) \\ \pi C  &  A_2-\pi^2CV \end{matrix}\right].
          \end{equation}

          We claim first that there exists $\pi V \in \Mat_{d_1 \times d_2}(\pi \Lri_\ell)$ such that the top right block in~\eqref{eq:conjugate} is zero. To prove this, observe that the map from $\Mat_{d_1 \times d_2}(\Lri_{\ell-1})$ to itself given by
          \[
            V \mapsto VA_2-A_1V-\pi VCV {\pmod  {\mfP^{\ell-1}}}
          \]
          is injective, and therefore surjective. For $\ell=2$ that follows from the fact that the characteristic polynomials $f_1$ and $f_2$ of the images of $A_1$ and $A_2$, respectively, modulo the prime ideal are coprime. Indeed, $VA_1=A_2V$ over~$\Rf$ implies that~${Vf(A_1)=f(A_2)V}$ for every polynomial $f$, and taking $f=f_1$ gives $0=Vf_1(A_1)=f_1(A_2)V$ with~$f_1(A_2)$ being invertible, hence $V=0$.  Hensel's lemma then implies that the map is surjective for every $\ell$. In the general linear case the claim follows by replacing  $\Lri_\ell$ with~$\lri_\ell$ throughout. For the unitary case we use the fact that every two anti-Hermitian matrices in~$\mfg(\lri_\ell)$ which are $\GL_d(\Lri_\ell)$-conjugate are already $\GU_d(\lri_\ell)$-conjugate; see Proposition~3.2 in \cite{AKOV2}.

          As the lower left blocks of $\wt{\xi}$ and $g \wt{\xi} g^{-1}$ remained the same we can repeat the same process using a transposed version to eliminate $\pi C$ as well.  The resulting element is now in $j\left(\mfg_1(\lri_\ell) \oplus \mfg_2(\lri_\ell)\right) \subset \mfg(\lri_\ell)$ and lies above $j(\xi_1,\xi_2)$.

    \item  Let $A=j(A_1,A_2), B=j(B_1,B_2) \in  \mfg(\lri_\ell)$ and let
          \[
            g= \left[\begin{matrix}  V_{11} & V_{12} \\ V_{21}  & V_{22} \end{matrix}\right] \in \bfG(\lri_\ell).
          \]
          Explicating $g A= B g$ we get that $V_{ij}A_j=B_{i} V_{ij}$ for $i,j \in \{1,2\}$.
          A similar argument to the one given in part (1) now implies that $V_{12}=0$ and $V_{21}=0$. It then follows that~$V_{ii}$ is invertible and conjugates $A_i$ to $B_i$. \qedhere
  \end{enumerate}

\end{proof}

\subsection{Primary orbits}\label{sec:adjoint-orbits-above-E} The previous section  reduced the classification of adjoint orbits to the primary orbits. For $\GL_d$ the results \cite[\S2]{Hill_Jord}  show that one can reduce the classification of orbits in~$\gl_d(\lri_\ell)$ to orbits lying over nilpotent orbits in~$\gl_d(\rf)$. These results directly apply to $\GU_d$ thanks to Proposition~3.2 in \cite{AKOV2} that guarantees that anti-Hermitian matrices in $\gu_d(\lri_\ell)$ are $\GL_d(\Lri_\ell)$-conjugate if and only if they are $\GU_d(\lri_\ell)$-conjugate.

The classification problem of elements in $\mfg(\lri_\ell)$ lying over non-regular nilpotent elements in~$\mfg(\rf)$ is notoriously difficult. The case of groups of type $A_2$ consists of one such orbit and is completely solved in \cite[\S3]{AKOV2}. We now describe and rephrase the latter classification in terms more suitable for the present paper.

Let $\phi_{\ell,j}: \gl_d(\Lri_\ell) \to \gl_d(\Lri_j)$ be the reduction map for $1 \leq j \leq \ell$. The map $\phi_{\ell,j}$ descends to a map between adjoint orbits
\[
  \bfG(\lri_\ell) \backslash \mfg(\lri_\ell) \longrightarrow  \bfG(\lri_j) \backslash \mfg(\lri_j).
\]

Proposition~\ref{fibre.over.e} gives a (redundant) set of representatives for the $\Ad(\bfG(\lri_\ell))$-orbits in the fibre $\phi^{-1}_{\ell,1}(\Ad(\GL_3(\Rf))E \cap \mfg(\rf))$. We note that
$\Ad(\GL_3(\Rf))E \cap \mfg(\rf)$ equals $\Ad(\bfG(\rf))E$ if $\bfG=\GL_3$ and equals $\Ad(\bfG(\rf))\nonsq E$ if $\bfG=\GU_3$.

\begin{lemma} For any local ring $R$ of odd characteristic such that $E=E_{1,3} \in \gl_3(R)$, we have
  \[
    \begin{split}
      \mfh(R):=C_{\gl_3(R)}(E)&=\left\{\left[\begin{matrix} a & x & z \\  0 & b & y \\ 0&  0 & a \end{matrix}\right] \mid a,b,x,y,z \in R\right\},  \\
      \mfh(R)^\perp=[E,\gl_3(R)]&=\left\{\left[\begin{matrix} a & x & z \\  0 & 0 & y  \\ 0&  0  & -a \end{matrix}\right] \mid a,x,y,z \in R \right\},
    \end{split}
  \]
  and $\gl_3(R)=\mfh(R)^\Tr \oplus \mfh(R)^\perp$.
\end{lemma}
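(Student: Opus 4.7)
The plan is to verify each of the three claims by direct matrix computation, exploiting the very explicit nature of $E = E_{1,3}$ and the assumption that $2$ is invertible in $R$.

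\textbf{Step 1: computing $C_{\gl_3(R)}(E)$.} I would write $A = (a_{ij}) \in \gl_3(R)$ and compute $EA$ and $AE$ entrywise. Since $E = E_{1,3}$, left multiplication by $E$ lifts row 3 of $A$ into row 1 (zeroing the rest), while right multiplication by $E$ drops column 1 of $A$ into column 3. Equating the two matrices forces, in order, $a_{31} = a_{32} = 0$, $a_{21} = 0$, and $a_{11} = a_{33}$, with all remaining entries free. Setting $a = a_{11} = a_{33}$, $b = a_{22}$, $x = a_{12}$, $y = a_{23}$, $z = a_{13}$ recovers the claimed form of $\mfh(R)$.

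\textbf{Step 2: the two descriptions of $\mfh(R)^\perp$.} For the containment $[E, \gl_3(R)] \subseteq \mfh(R)^\perp$, I would note that for any $B \in \gl_3(R)$ and any $A \in \mfh(R) = C_{\gl_3(R)}(E)$, cyclicity of the trace gives
\[
\tr([E,B]A) = \tr(EBA) - \tr(BEA) = \tr(EBA) - \tr(BAE) = 0,
\]
using $EA = AE$ in the last step. For the reverse containment I would just carry out both explicit computations: $[E, B] = EB - BE$ has the form displayed in the lemma with four free parameters (corresponding to $B_{31}$, $B_{32}$, $B_{33} - B_{11}$, and $-B_{21}$), and the defining conditions $\tr(BA) = 0$ for $A = aI_{11,33} + bE_{22} + xE_{12} + yE_{23} + zE_{13}$ (where $I_{11,33}$ denotes the diagonal matrix with $1$'s in positions $(1,1)$ and $(3,3)$) force exactly $B_{11} + B_{33} = 0$, $B_{22} = B_{21} = B_{31} = B_{32} = 0$. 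The two descriptions then coincide.

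\textbf{Step 3: the direct sum decomposition.} Transposing the description of $\mfh(R)$ gives the strictly lower-triangular analogue of the centraliser, and comparing with $\mfh(R)^\perp$ I note that the off-diagonal entry positions $(1,2), (1,3), (2,3)$ are supplied only by $\mfh(R)^\perp$, while $(2,1), (3,1), (3,2)$ and the $(2,2)$ entry are supplied only by $\mfh(R)^\Tr$; the unique nontrivial constraint is on the pair $(1,1), (3,3)$. Writing $a + a' = m_{11}$ and $a - a' = m_{33}$, solving for $a$ and $a'$ requires division by $2$, which is where the odd characteristic hypothesis enters. The solution is unique, so the sum is direct.

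\textbf{Anticipated difficulty.} There is essentially no obstacle: the lemma is a bookkeeping result, and the only delicate point is that the $(1,1)$/$(3,3)$ split in Step 3 relies on $2 \in R^\times$. The trace-cyclicity argument in Step 2 is the only step with any conceptual content, and it is the reason the equality $[E, \gl_3(R)] = \mfh(R)^\perp$ holds without further hypotheses on $R$ beyond those already present.
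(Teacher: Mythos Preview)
Your proposal is correct and follows exactly the approach the paper takes: the paper's proof consists of the single sentence ``This follows by usual matrix multiplication,'' and your three steps supply precisely those routine computations. The only point worth noting is that you have correctly identified where the odd-characteristic hypothesis is used (solving $a+a'=m_{11}$, $a-a'=m_{33}$ in Step~3), which the paper leaves implicit.
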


\begin{proof}
This follows by usual matrix multiplication. 
\end{proof}

\begin{proposition}\cite[Sections~2,3]{AKOV2} \label{fibre.over.e}  Every $\Ad(\bfG(\lri_\ell))$-orbit $\Omega \subset \mfg(\lri_\ell)$ that lies above \linebreak $\Ad(\GL_3(\Rf))E \cap \mfg(\rf)$ has a representative $A \in \mfh(\Lri_\ell)^\Tr \cap \mfg(\lri_\ell)$ of the form
  \begin{equation}\label{form.of.char} 
    A = A(\ell, m; \omega, \delta,  \alpha, \beta, \gamma)  
        =  r E +  \omega \mathrm{I} + \underbrace{\left[\begin{matrix}0  & 0 & 0 \\   0 & \delta & 0 \\ \delta^2 & 0  & 0 \end{matrix}\right]}_{\Delta} +  \left[\begin{matrix} 0 & 0 & 0 \\   \alpha & 0 & 0 \\ \gamma &  \beta & 0 \end{matrix}\right]
  \end{equation}
  with uniquely determined
  \[
    m = \min \{\val(\alpha), \val(\beta), \val(\gamma)\}
  \]
  satisfying $1 \le m \le \ell$, $\omega, \delta \in \pi \Lri_{\ell}$, and
  \begin{itemize}
    \item If $\bfG=\GL_3$ then $r=1$;

    \item If $\bfG=\GU_3$ then $r=\nonsq$,  $\gamma = \gamma^\circ$, $\omega = \omega^\circ$, $\alpha = \beta^\circ$, and $\delta = \delta^\circ$.

  \end{itemize}

\end{proposition}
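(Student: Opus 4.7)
The proposition is essentially a restatement of \cite[\S 2 and \S 3]{AKOV2}, so the plan is to recall that strategy in the current setting. The argument splits into three stages: conjugate an arbitrary lift into the distinguished complement $\mfh^\Tr$ of $\ad(E)(\gl_3)$, read off the specific $(\omega,\delta,\alpha,\beta,\gamma)$ parametrisation, and then verify that $m$ is a well-defined invariant.

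First I would start with an arbitrary $\wt{A} \in \mfg(\lri_\ell)$ whose reduction modulo $\mfP$ lies in $\Ad(\GL_3(\Rf))E \cap \mfg(\kk)$. After an initial conjugation (available even in the unitary case by the rigidity result in \cite[Proposition 3.2]{AKOV2}) I may assume $\wt{A}$ reduces to $rE$, with $r=1$ in the linear case and $r=\nonsq$ in the unitary case; the choice $r=\nonsq$ is forced since $E^\star = E$ while $(\nonsq E)^\star = -\nonsq E$. Now $\wt{A} - rE \in \mfP \cdot \gl_3(\Lri_\ell)$, and the preceding lemma supplies the direct sum decomposition $\gl_3 = \mfh^\Tr \oplus [E,\gl_3]$. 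An inductive, level-by-level argument then removes the $\mfh^\perp = [E,\gl_3]$ component: if $\wt{A} - rE \in \mfh^\Tr + \pi^k \mfg$, conjugation by $I + \pi^k Y$ alters $\wt{A} - rE$ by $r\pi^k [Y,E]$ modulo $\pi^{k+1}$, and since $r$ is a unit and $\ad(E): \gl_3/\mfh \to \mfh^\perp$ is an isomorphism, I can choose $Y$ to kill the $\mfh^\perp$-part at level $k$. In the unitary case I must pick $Y \in \gu_3$; that this is still possible is precisely the content of \cite[Proposition 3.2]{AKOV2}.

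Once $A = rE + B$ with $B \in \mfh^\Tr \cap \mfP\cdot\mfg(\lri_\ell)$, the parameters can simply be read off: the constraint $(1,1)=(3,3)$ defining $\mfh^\Tr$ gives the common diagonal entry $\omega \in \pi\Lri_\ell$, the $(2,2)$-entry is $\omega + \delta$ with $\delta \in \pi\Lri_\ell$, the $(2,1)$- and $(3,2)$-entries are $\alpha$ and $\beta$, and the $(3,1)$-entry is rewritten as $\gamma + \delta^2$. The $\delta^2$-shift is a cosmetic rechoice of complement: it exposes the centraliser of $rE+\omega I + \Delta$ in a form that simplifies the residual $\mfh$-type conjugation analysed in \cite[\S 3]{AKOV2}. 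In the unitary case the required Galois symmetries on $(\omega,\delta,\alpha,\beta,\gamma)$ follow by expanding $A + A^\star = 0$ entry by entry using $(a_{ij})^\star = W(a_{ji}^\circ)W^{-1}$ with the antidiagonal $W$.

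The step I expect to be the main obstacle is the uniqueness of $m$. The parameters $\alpha,\beta,\gamma$ themselves are not individually invariants, since the stabiliser of $rE + \omega I + \Delta$ in $\bfG(\lri_\ell)$ still acts on the lower-triangular tail and can shuffle them. The claim is that the minimum of their valuations is preserved. I would argue this by noting that $\omega$ and $\delta$ are determined by the characteristic polynomial of $A$ (hence are orbit invariants), so $A - rE - \omega I - \Delta$ is well-defined up to conjugation by the stabiliser; one then shows that the induced action of this stabiliser on the leading $\pi^{-m}$-coefficient of $(\alpha,\beta,\gamma)$ modulo $\pi$ is linear and cannot annihilate a nonzero vector, forcing $m$ to be invariant. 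The careful bookkeeping of this stabiliser action is carried out in \cite[\S 3]{AKOV2} and constitutes the technical heart of the classification.
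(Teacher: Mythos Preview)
The paper does not give its own proof of this proposition; it is stated as a direct citation of \cite[Sections~2,3]{AKOV2}, and the text immediately following it defers all detail (``The precise relations between the different representatives \ldots\ are involved and given explicitly in \cite[Theorems~2.20, 3.14]{AKOV2}'').  So there is nothing to compare against beyond the citation itself, and your decision to sketch the AKOV2 strategy is appropriate.

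Your outline of the first two stages --- reducing to a lift of $rE$, then using the decomposition $\gl_3 = \mfh^\Tr \oplus [E,\gl_3]$ from the preceding lemma to inductively kill the $\mfh^\perp$-component level by level, with rigidity \cite[Proposition~3.2]{AKOV2} handling the unitary case --- is correct and is indeed the mechanism in AKOV2.  The entrywise reading of $(\omega,\delta,\alpha,\beta,\gamma)$ and the verification of the Galois constraints via $A+A^\star=0$ are also fine.

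There is, however, a gap in your argument for the invariance of $m$.  You assert that $\omega$ and $\delta$ are determined by the characteristic polynomial of $A$, but this is not so: the three coefficients of the characteristic polynomial involve $\omega,\delta,\gamma,\alpha\beta$ jointly (for instance the trace is $3\omega+\delta$ and the next coefficient involves $\gamma+\delta^2$), so one cannot isolate $\omega$ and $\delta$ without already knowing something about $(\alpha,\beta,\gamma)$.  In fact the paper explicitly remarks that the representatives are not unique and that the relations among them are ``involved''.  The invariance of $m$ in AKOV2 is established not via $\omega,\delta$ but by a direct analysis of the residual conjugation action of the stabiliser on the tail $(\alpha,\beta,\gamma)$: one checks that this action, read off at level $m$, is a genuine group action on a nonzero vector in $\kk^3$ (or its unitary analogue) and hence cannot push the minimum valuation up.  Your final sentence gestures at this, but it should stand on its own rather than rest on the incorrect characteristic-polynomial claim.
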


The precise relations between the different representatives in Proposition~\ref{fibre.over.e} are involved and given explicitly in \cite[Theorems 2.20, 3.14]{AKOV2}. The incremental branching rules, however, are easier to describe.

\begin{theorem}\label{branching.e.GL} Let $A=A(\ell,\ell; \omega,\delta, 0, 0, 0)  \in \phi^{-1}_{\ell,1}(rE) \subset \mfg(\lri_\ell)$. The fibre $\phi^{-1}_{\ell+1,\ell}(A)$ is a torsor over $\mfh(\rf)^\Tr = \mfh(\Rf)^\Tr \cap \mfg(\rf)$, with
  $$h=h(\alpha_1,\alpha_2,\beta_1,\beta_2,\gamma) =\left[\begin{matrix} \alpha_1 &0  & 0 \\  \beta_1& \alpha_2 & 0 \\ \gamma &  \beta_2 & \alpha_1 \end{matrix}\right]  \in \mfh(\kk)^{\Tr}$$
  acting on elements in the fibre by
  $\tilde{X} \mapsto \tilde{X} +\pi^\ell h$.
  By fixing a base point $$ X_0=X(\ell+1,\ell+1; \wt{\omega},\wt{\delta}, 0, 0, 0)$$ with $\wt{\omega} \equiv_{\ell} \omega$ and $\wt{\delta} \equiv_{\ell} \delta$. We can describe the orbits in the fibre in terms of $\mfh(\rf)^{\Tr}$.

  \begin{itemize}[leftmargin=*]

    \item $\bfG=\GL_3$. In this case $\alpha_1,\alpha_2,\beta_1,\beta_2,\gamma \in \rf$, and above $A$  there are

          \begin{itemize}[leftmargin=*]
            \item[$\circ$] $q^2$ classes $X(\alpha_1,\alpha_2,0,0,0)$, all of them are singletons.
            \item[$\circ$] $q^2(q-1)$ classes represented by $X(\alpha_1,\alpha_2,*,*,\gamma)$, $\gamma \in \rf^\times$, $\alpha_1,\alpha_2 \in \rf$.
            \item[$\circ$] $q(q+1)$ classes  represented by $X(\alpha_1,\alpha_1,\beta_1,\beta_2,0)$,  $\alpha_1 \in \rf$, $[\beta_1:\beta_2] \in \mathbb{P}^1(\rf)$

          \end{itemize}

    \item $\bfG=\GU_3$. In this case $\alpha_1,\alpha_2,\beta_1,\beta_2,\gamma \in \Rf$ satisfying $\alpha_i +\alpha_i^\circ = 0$, $\gamma+\gamma^\circ=0$ and $\beta_1+\beta_2^\circ=0$. Above $A$ there are

          \begin{itemize}[leftmargin=*]
            \item[$\circ$] $q^2$ classes $X(\alpha_1,\alpha_2,0,0,0)$, all of them are singletons.
            \item[$\circ$] $q^2(q-1)$ classes represented by $X(\alpha_1,\alpha_2,*,*,\gamma)$, $\alpha_1,\alpha_2,\gamma \in \gu_1(\rf)$, $\gamma \ne 0$.
            \item[$\circ$] $q(q-1)$ classes  represented by $X(\alpha_1,\alpha_1,\beta_1,-\beta_1^\circ,0)$,  $\alpha_1 \in \gu_1(\rf)$, $[\beta_1:-\beta_1^\circ] \in \GU_1(\kk) \smallsetminus\{1,-1\}$. 

          \end{itemize}

  \end{itemize}

\end{theorem}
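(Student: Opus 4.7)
The plan is to parameterise the $\Ad(\bfG(\lri_{\ell+1}))$-orbits in $\phi_{\ell+1,\ell}^{-1}(A)$ in two stages. Writing an arbitrary lift as $\tilde X=X_0+\pi^\ell Y$ with $Y\in\mfg(\kk)$, the first stage exploits the action of the kernel of $\bfG(\lri_{\ell+1})\twoheadrightarrow\bfG(\lri_\ell)$. Since $\omega,\delta\in\pi\Lri_\ell$ force $X_0\equiv rE\pmod\pi$, one computes
\[
\Ad(I+\pi^\ell T)(\tilde X)\equiv X_0+\pi^\ell\bigl(Y+[T,rE]\bigr)\pmod{\pi^{\ell+1}},
\]
so by the lemma identifying $[\mfg(\kk),E]=\mfh(\kk)^\perp$ together with the splitting $\gl_3(\kk)=\mfh(\kk)^{\Tr}\oplus\mfh(\kk)^\perp$, this innermost congruence subgroup translates $Y$ exactly by its $\mfh(\kk)^\perp$-component. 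Hence every lift has a unique representative $X_0+\pi^\ell h$ with $h\in\mfh(\kk)^{\Tr}$, which is the asserted torsor statement.

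In the second stage, the residual action of the preimage of $Z_{\bfG(\lri_\ell)}(A)$ in $\bfG(\lri_{\ell+1})$ on $h$ factors, modulo the pro-$p$ part already used, through the shadow $\Sh_{\bfG(\lri_\ell)}(A)\subseteq Z_{\bfG(\kk)}(rE)=\mfh(\kk)^\times$. The main obstacle is identifying this shadow with the full $\mfh(\kk)^\times$: since $\omega,\delta\in\pi\Lri_\ell$, for any $\bar g\in\mfh(\kk)^\times$ the obstruction $\bar g A\bar g^{-1}-A=\bar g\Delta\bar g^{-1}-\Delta$ is already of order $\pi$, and the surjectivity of $[\,\cdot\,,rE]\colon\gl_3(\kk)\twoheadrightarrow\mfh(\kk)^\perp$ powers a Hensel iteration producing an honest lift into the centraliser of $A$. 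Granting this, the action on $h$ is by $\mfh(\kk)^\times$ via conjugation followed by projection along $\mfh(\kk)^\perp$.

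Explicit commutator calculations then enumerate the orbits. The diagonal torus $\diag(a,b,a)$ fixes $\alpha_1,\alpha_2,\gamma$ and rescales $(\beta_1,\beta_2)\mapsto((b/a)\beta_1,(a/b)\beta_2)$; conjugation by $I+xE_{1,2}$ induces, after projection, the shifts $\alpha_1\mapsto\alpha_1+x\beta_1/2$, $\alpha_2\mapsto\alpha_2-x\beta_1$, $\beta_2\mapsto\beta_2-x\gamma$; conjugation by $I+yE_{2,3}$ induces $\alpha_1\mapsto\alpha_1-y\beta_2/2$, $\alpha_2\mapsto\alpha_2+y\beta_2$, $\beta_1\mapsto\beta_1+y\gamma$; and $I+zE_{1,3}$ acts trivially since $[E,\mfh(\kk)^{\Tr}]\subseteq\mfh(\kk)^\perp$. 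When $\gamma\neq 0$, one chooses $x=\beta_2/\gamma$ and $y=-\beta_1/\gamma$ to set $\beta_1=\beta_2=0$; the redefined $(\alpha_1,\alpha_2)$ then range freely over $\kk\times\kk$, producing $q^2(q-1)$ classes indexed by $(\alpha_1,\alpha_2,\gamma)$. When $\gamma=0$ but $(\beta_1,\beta_2)\neq 0$, the torus collapses the pair to $\pP^1(\kk)$ while the joint unipotent action shifts $(\alpha_1,\alpha_2)$ only along the line $(\tfrac12,-1)$, so $2\alpha_1+\alpha_2$ is the unique invariant; since $p>3$, this is uniquely reparameterised as $\alpha_1=\alpha_2$, yielding $q(q+1)$ classes. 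The residual stratum $\gamma=\beta_1=\beta_2=0$ is fixed pointwise and contributes $q^2$ singletons.

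For $\GU_3$ one runs the identical argument under the anti-Hermitian constraints $\alpha_i+\alpha_i^\circ=0$, $\gamma+\gamma^\circ=0$, $\beta_1+\beta_2^\circ=0$; Galois-equivariance preserves every step. The moduli become $\gu_1(\kk)$ (of size $q$) for the diagonal parameters and $\GU_1(\kk)$ (of size $q+1$) in place of $\pP^1(\kk)$ in the third stratum. The two points $\pm 1\in\GU_1(\kk)$ correspond to $\beta_1\in\lri_\ell$ or $\beta_1\in\gu_1(\kk)$, in which cases the lift falls into a configuration already captured by the earlier strata; excising them yields $q-1$ admissible projective moduli and hence $q(q-1)$ orbits.
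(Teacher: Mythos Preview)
The paper does not give its own proof of this theorem; it cites \cite{AKOV2} (Sections~2 and~3, Table~2.2) and notes that the representatives here are chosen for convenience. Your direct argument is therefore more than the paper offers, and its skeleton is sound: it mirrors the proof the paper does give for the parallel Theorem~\ref{branch.Jc}, reducing first by the innermost congruence kernel to the torsor $\mfh(\kk)^\Tr$ and then analysing the residual action of the shadow.

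Two points deserve tightening. First, the assertion that the residual action is ``conjugation followed by projection'' hides a cocycle: for a general lift $\tilde g$ of $g\in Z_{\bfG(\lri_\ell)}(A)$ one has $\tilde g X_0\tilde g^{-1}=X_0+\pi^\ell c(\tilde g)$ with $c(\tilde g)$ not a priori in $\mfh(\kk)^\perp$. The clean way to kill this is to note that $X_0=rE+\wt\omega I+\wt\Delta$ has centraliser $\bfJ_{\wt\delta}(\lri_{\ell+1})$ in $\bfG(\lri_{\ell+1})$, and this group already surjects onto $Z_{\bfG(\kk)}(rE)$; choosing $\tilde g$ there forces $c(\tilde g)=0$. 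Your Hensel sketch is in the right spirit but this one-line observation (exactly the device used in the proof of Theorem~\ref{branch.Jc}) replaces it.

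Second, your justification for the count $q(q-1)$ in the $\GU_3$ third stratum is incorrect. In the unitary case the unipotent generators are coupled via $y=-x^\circ$, the torus is $\GU_1\times\GU_1$, and with $\gamma=0$ the pair $(\beta_1,\beta_2)=(\beta_1,-\beta_1^\circ)$ has $\beta_1\in\Rf^\times$ acted on by $t\in\GU_1(\kk)$ via $\beta_1\mapsto t\beta_1$. The orbit space is simply $\Rf^\times/\GU_1(\kk)$, of size $(q^2-1)/(q+1)=q-1$; combined with the invariant $2\alpha_1+\alpha_2\in\gu_1(\kk)$ this gives $q(q-1)$. There is no excision of ``$\pm1$'' and no overlap with the other strata: the cases $\beta_1\in\kk$ or $\beta_1\in\gu_1(\kk)$ are genuine stratum-(iii) orbits, not absorbed elsewhere. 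The numerical coincidence $|\GU_1(\kk)\smallsetminus\{\pm1\}|=q-1$ in the theorem's labelling of representatives should not be read as an invariant.
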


Theorem~\ref{branching.e.GL}, albeit with different representatives, is proved in \cite{AKOV2}: in \S2 for the general linear case and in \S3 for the unitary case. The branching rules, that is, the numerical classification, are encoded in Table 2.2 in {\em loc.\! cit.} in lines 13-17, with $\epsilon=1$ for the linear case and $\epsilon=-1$ for the unitary case. The representatives above are chosen as they are significantly better behaved for computations and for qualitative considerations. The orbit representatives for $\SL_3$ and $\SU_3$ for $p \ne 3$ are the traceless representatives above. The obstruction to transfer these results to the prime $3$ in these cases is the fact that the Killing form is degenerate.

\section{Proof of Theorem~\ref{thm:dim.reduction.decomposable}}\label{sec:proof.thm.D}

In this section we consider irreducible representations of $\bfG(\lri_\ell)$ lying over decomposable orbits and prove Theorem~\ref{thm:dim.reduction.decomposable}. Let $\xi =j(\xi_1,\xi_2)=\left(\begin{smallmatrix} \xi_1 & \\ & \xi_2\end{smallmatrix}\right) \in \mfg(\kk)$ be a representative of a decomposable orbit, that is, the characteristic polynomials of $\xi_1$ and $\xi_2$ are coprime. Let $\phi:\Mat_c(\Lri_m) \to \Mat_c(\Lri_1)$ be the reduction map, with a slight abuse of notation letting the level and the dimension vary with the context. By the first part of Proposition~\ref{prop:decomp.orbits}, for every $m \in \N$, every $\wt{\xi} \in \mfg(\lri_m)$ that lies over $\xi$ can be $\bfG(\lri_m)$-conjugated to a block matrix $j(\wt{\xi_1},\wt{\xi_2}) \in \mfg(\lri_m)$, giving rise to a bijection between orbits of the fibres $\phi^{-1}(\xi) \subset \mfg(\lri_m)$ over~$\xi$ to pair of orbits over~$\xi_1$ and $\xi_2$: 
\begin{equation}\label{eq:bijection.of.orbits}
\phi^{-1}\left(\Cent_{\bfG(\kk)}(\xi)\right) \backslash \phi^{-1}(\xi) \xleftrightarrow{\enspace 1:1 \enspace} \phi^{-1}\left(\Cent_{\bfG_1(\kk)}(\xi_1)\right) \backslash \phi^{-1}(\xi_1)\times \phi^{-1}\left(\Cent_{\bfG_2(\kk)}(\xi_2)\right) \backslash \phi^{-1}(\xi_2)  
\end{equation}

\subsection{Proof for $\ell=2m$ even} The $m^{\text{th}}$ congruence subgroup of $\bfG(\lri_\ell)$ is \[K^m=\exp(\pi^{m}\mfg(\lri_m)) \cong (\mfg(\lri_m),+).\] 
Similarly, write $K_i^m$ for $\exp(\pi^{m}\mfg_i(\lri_m))$, $i=1,2$. The second part of Proposition~\ref{prop:decomp.orbits} guarantees that the stabilisers on both sides of the correspondence are the same. Let $(\wt{\xi_1},\wt{\xi_2})$ be a representative of an orbit on the RHS of \eqref{eq:bijection.of.orbits} and $\wt{\xi}=j(\wt{\xi_1},\wt{\xi_2})$ be the chosen representative of the corresponding orbit on the LHS. By Clifford theory, and the bijection \eqref{eq:bijection.of.orbits}, proving that the bottom arrow in the following diagram is a bijection will imply that the upper arrow is a bijection:
\begin{equation}\label{diag.product}
\xymatrix{\Irr\left(\bfG(\lri_\ell) \mid j(\wt{\xi_1},\wt{\xi_2})\right) & \ar@{.>}[l]  \Irr\left(\bfG_1(\lri_\ell) \mid \wt{\xi_1}\right) \times \Irr\left(\bfG_2(\lri_\ell) \mid \wt{\xi_2}\right)\\
\Irr\left(\Stab_{\bfG(\lri_\ell)}(\wt{\xi}) \mid j(\wt{\xi_1},\wt{\xi_2})\right)  \ar[u]^{\Ind} & \ar[l]   \ar[u]^{\Ind} \Irr\left(\Stab_{\bfG_1(\lri_\ell)}(\wt{\xi_1}) \mid \wt{\xi_1}\right) \times \Irr\left(\Stab_{\bfG_2(\lri_\ell)}(\wt{\xi_2}) \mid \wt{\xi_2}\right).}
\end{equation}

Now, writing elements of $\mfg(\lri_m)$ as $2$-by-$2$ block matrices according to the blocks of $j(\wt{\xi_1},\wt{\xi_2})$, we see that the additive subgroup $\mathfrak{l}(\lri_m)=\mathfrak{l}_{12}(\lri_m) \oplus \mathfrak{l}_{21}(\lri_m)=\left\{\left(\begin{smallmatrix} 0 & * \\ * & 0 \end{smallmatrix}\right)\right\} \leq \mfg(\lri_m)$, consisting of the $(1,2)$ and $(2,1)$ blocks,  is in the kernel of the map $x \mapsto \psi(\mathrm{Tr}(\wt{\xi}x))$. Denoting $L^m:=\exp\left(\pi^m\mathfrak{l}(\lri_m)\right)$, we get an isomorphism  
\[
\Stab_{\bfG(\lri_\ell)}(\wt{\xi}) /L^m \cong \Stab_{\bfG_1(\lri_\ell)}(\wt{\xi_1}) \times \Stab_{\bfG_2(\lri_\ell)}(\wt{\xi_2}),
\]
which implies that the bottom map in \eqref{diag.product} is a bijection. 

\subsection{Proof for $\ell=2m+1$ odd}  The complication in the odd case comes from the fact that the largest abelian congruence kernel is $K^{m+1}$, which is smaller relative to the size of the group compared to even case. Similar considerations show that 
\begin{equation}\label{eq:stab.odd.decomp}
\Stab_{\bfG(\lri_\ell)}(\wt{\xi}) /K^{m+1} \cong \underbrace{L^{m}K^{m+1}/K^{m+1}}_{\cong \mathfrak{l}(\kk)} \rtimes \left(\Stab_{\bfG_1(\lri_\ell)}(\wt{\xi_1})/K_1^{m+1} \times \Stab_{\bfG_2(\lri_\ell)}(\wt{\xi_2})/K_2^{m+1}\right).
\end{equation}

The group $K^m$ is a subgroup of $\Stab_{\bfG(\lri_\ell)}(\wt{\xi}_m)$, because $[K^m,K^{m+1}]=\{1\}$, and therefore contains $K^{m+1} = \exp\left(\pi^{m+1}\mfg(\lri_{m})\right) \cong \left(\mfg(\lri_{m}),+\right)$ as a central subgroup. We have  
\[
K^m/K^{m+1} \cong \mfg(\kk) \cong  \mfg_1(\kk) \oplus \mfg_2(\kk) \oplus \mathfrak{l}_{12}(\kk) \oplus \mathfrak{l}_{21}(\kk). 
\]

The reduction of the element $\wt{\xi} = j(\wt{\xi_1},\wt{\xi_2}) \in \mfg(\lri_{m+1})$ to $\mfg(\lri_m)$ induces an alternating bilinear form on the quotient $\mfg(\kk)$ with radical $\mfg_1(\kk) \oplus \mfg_2(\kk)$, and $\wt{\xi}$  defines a linear extension to $K_1^mK_2^mK^{m+1}$. It follows that there is a unique irreducible representation $\rho({\wt{\xi}})$ of $K^m$ lying above that extension. Moreover, both $\mathfrak{l}_{12}(\kk)$ or $\mathfrak{l}_{21}(\kk)$ is a $\Stab_{\bfG(\lri_\ell)}(\wt{\xi}_m)$-stable Langrangian subspace, and one can explicitly construct the irreducible representation $\rho({\wt{\xi}})$ by extending the character $\wt{\xi}$ of $K_1^mK_2^mK^{m+1}$ trivially on $L_{12}^m=\exp(\pi^m\mathfrak{l}_{12}(\lri_{m+1}))$ and then induce irreducibly to $K^m$. The proof continues similar to the proof of the even case using the isomorphism~\eqref{eq:stab.odd.decomp}.

\section{Construction of non-regular nilpotent characters}
\label{sec:construction-of-non-regular-characters-G}

\subsection{Overview}
In this section we construct the  characters of $\bfG(\cO)$ lying above nilpotent orbits of rank one and level~${\ell-1}$ for $\bfG \in \{\GL_3, \GU_3$\}. These are, by definition, the primitive non-regular nilpotent characters of~$\bfG(\cO_{\ell})$. We assume throughout that $p=\mathrm{char}(\kk) > 3$. Let $\bfJ(\lri_\ell)=Z_{\bfG(\lri_\ell)}(E)$, where $E$ is defined in \eqref{def.E}. Recall that $\ldown = \lfloor \frac{\ell}{2} \rfloor$ and $\lup = \lceil \frac{\ell}{2} \rceil$. For $0 \leq m \leq \ell$, define  $\mdown = \lfloor \frac{\ell-m}{2} \rfloor$ and $\mup = \lceil \frac{\ell-m}{2} \rceil$. For $a,b \in \cO_\ell$, we write $a \equiv_{t} b$ rather than $a \equiv b \mod {(\pi^t)}$.

By the classification of the adjoint orbits lying over $E$, as given in Section~\ref{sec:adjoint-orbits-above-E}, for every non-regular nilpotent character $\chi$, there exists $A =  A(\ell, m;  \omega, \delta,  \alpha, \beta, \gamma) \in \mfh(\Lri_\ell)^\Tr \cap \mfg(\lri_\ell)$ of the form~(\ref{form.of.char}) with $m = \min \{\val(\alpha), \val(\beta), \val(\gamma)\} $ 
such that $\chi$ lies above $\psi_{A_\ldown}$. By Clifford theory $\Irr(\bfG(\lri_\ell) \mid \psi_{A_{\ldown}})$ is in bijection with $\Irr(\iga \mid \psi_{A_{\ldown}})$. Therefore, we focus on understanding the latter. Up to multiplication by a central character of $\bfG(\cO_\ell)$, we can omit~$\omega$ and shall assume from now onwards that  \[
  A_\ldown = rE +   \left[\begin{matrix} 0 &  &  \\    &\delta &  \\ \delta^2  &   & 0 \end{matrix}\right] +  \left[\begin{matrix} 0 &  &  \\   \alpha & 0 &  \\ \gamma &  \beta & 0 \end{matrix}\right],
\] 
where $\delta, \alpha, \beta, \gamma$ are considered modulo $\pi^{\ldown}$.

The following result, whose proof occupies the rest of this section, summarises the construction of all non-regular nilpotent characters of $\bfG(\cO_\ell)$. 

\begin{theorem}
\label{thm:construction}
  For every non-regular non-zero nilpotent character $\chi$ of $\bfG(\cO_\ell)$, there exists  $A  \in \mfh(\Lri_\ell)^\Tr \cap \mfg(\lri_\ell)$ of the form~(\ref{form.of.char}) such that $\chi$ lies above (an explicitly defined) character~$\psi_{A_\lup}$ of $\Stab_{\bfG(\lri_\ell)^{\ldown}}(\psi_{A_{\ldown}})$ which extends $\psi_{A_\ldown}$ and the following hold:
 \begin{itemize} 
 \item[(a)] For $m < \lup,$ there exists an explicitly constructed character $\sigma$ of  $\iga^{\mdown}$ such that $\sigma(1) = q^{2(\lup-\ldown) + (m_2-m_1)}$, $\sigma$ extends to its stabiliser $I(\sigma)$ in $\iga$, the extension induces irreducibly to $\bfG(\cO_\ell)$ and gives $\chi$. 
 \item[(b)] For $m \geq  \lup,$ there exists an explicitly constructed character $\sigma$ of $\iga^{\ldown}$ such that $\sigma(1) = q^{2(\lup-\ldown)}$ and $\sigma$ extends to $\iga$. This extension tensored with an irreducible character of $\bfJ(\cO_\ell)$ gives  $\chi$.  
 \end{itemize} 
\end{theorem}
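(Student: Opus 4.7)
The plan is to proceed by Clifford theory along the canonical congruence filtration, at each stage reducing to the study of an explicit Heisenberg-like quotient cut out by the commutator pairing attached to $\psi_{A_\ldown}$. First, since $\bfG(\lri_\ell)^{\ldown}$ is an abelian normal subgroup of $\bfG(\lri_\ell)$ and $\chi$ lies above $\psi_{A_\ldown}$, Clifford theory gives a dimension-preserving bijection (up to induction from the stabilizer) between $\Irr(\bfG(\lri_\ell) \mid \psi_{A_\ldown})$ and $\Irr(\iga \mid \psi_{A_\ldown})$, so the problem reduces to producing all irreducible representations of the stabilizer $\iga$ that restrict to $\psi_{A_\ldown}$. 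The key input is the normal form of Proposition~\ref{fibre.over.e} together with the description of $\iga$ using the decomposition $\mfg=\mfh \oplus \mfh^\perp$ and the analysis of $\Ad$-orbits in Section~\ref{sec:adjoint-orbits-above-E}.

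Next, I would extend $\psi_{A_\ldown}$ one step further to a character $\psi_{A_\lup}$ of $\Stab_{\bfG(\lri_\ell)^\ldown}(\psi_{A_\ldown})$ by choosing a lift $A_\lup$ of $A_\ldown$. For $\ell$ even this is automatic because $\lup = \ldown$, while for $\ell$ odd one uses the identification of $\bfG(\lri_\ell)^\ldown/\bfG(\lri_\ell)^\lup$ with $(\mfg(\kk),+)$ and the fact that, after fixing $A_\lup$, the trace-form character $\psi_{A_\lup}$ is trivial on the radical of the commutator pairing $\mfg(\kk)\times\mfg(\kk)\rightarrow \C^\times$, $(X,Y)\mapsto \psi(\tr(A_\lup[X,Y]))$. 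The relevant alternating forms and their radicals have to be computed case by case using the explicit form~\eqref{form.of.char}.

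For part (b), when $m\ge\lup$, the lower-order blocks $\alpha,\beta,\gamma$ are deep enough in $\pi$ that the whole stabilizer in $\iga^\lup$ is all of $\iga^\lup$ and $\psi_{A_\lup}$ extends to a character $\sigma$ of $\iga^\ldown$ whose dimension is $q^{2(\lup-\ldown)}$ (coming from the Stone--von Neumann representation of the Heisenberg quotient of $\iga^\lup/\iga^{\ell}$ cut out by the commutator form with $A$, whose center has codimension $2(\lup-\ldown)$ in the relevant $\kk$-vector space). One then shows that $\sigma$ extends to all of $\iga$, which is possible because $\iga/\iga^\ldown$ contains $\bfJ(\cO_\ell)/\bfJ(\cO_\ell)^\ldown$ acting trivially on the Heisenberg quotient, and the extensions are parametrised (up to a single fixed choice) by tensoring with characters pulled back from $\bfJ(\cO_\ell)$; this accounts for all elements of $\Irr(\iga\mid\psi_{A_\ldown})$ and, after inducing to $\bfG(\lri_\ell)$, for all $\chi$ with this datum.

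For part (a), with $m<\lup$, the character $\psi_{A_\lup}$ no longer extends to $\iga^\lup$ and one must descend past $\iga^\lup$ to $\iga^\mdown$. Here $m_1 = \val(\alpha)$ and $m_2 = \min\{\val(\beta),\val(\gamma)\}$ determine the coisotropic structure of the commutator pairing on the abelian quotient $\iga^\mdown/\iga^\mup$, identified via the exponential with an $\kk$-Lie-subalgebra of $\mfg(\cO_{\mup-\mdown})$. A direct computation using the shape of $\mfh^\Tr$ shows that this pairing is non-degenerate with polarisation codimension equal to $2(\lup-\ldown)+(m_2-m_1)$, whence a unique Heisenberg representation $\sigma$ of $\iga^\mdown$ lying above $\psi_{A_\lup}$ exists with the prescribed dimension $q^{2(\lup-\ldown)+(m_2-m_1)}$. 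Finally, standard Mackey/Clifford theory yields an extension of $\sigma$ to its $\iga$-stabiliser $I(\sigma)$, and induction from $I(\sigma)$ to $\bfG(\lri_\ell)$ is irreducible and produces $\chi$.

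The main obstacle is the explicit verification that the commutator pairing has the claimed rank and polarisation structure at each intermediate level, distinguishing the linear and unitary cases (which differ only by a Galois-equivariance constraint and the substitution $r=\nonsq$), and verifying that the Heisenberg representations so constructed really do extend to their full stabilisers in $\iga$. This requires a careful analysis of the centralizers of $A$ modulo various powers of $\pi$, for which the block decomposition $\gl_3 = \mfh^\Tr \oplus \mfh^\perp$ and the explicit normal form~\eqref{form.of.char} are indispensable; once these linear-algebraic inputs are assembled, the Heisenberg/Clifford machinery assembles $\chi$ in the claimed way.
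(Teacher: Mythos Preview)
Your overall strategy---Clifford theory along the congruence filtration, extension of $\psi_{A_{\ldown}}$ step by step, and a Heisenberg/Stone--von Neumann analysis of the commutator form---is exactly the route the paper takes. However, you have misidentified $m_1$ and $m_2$: they are \emph{not} $\val(\alpha)$ and $\min\{\val(\beta),\val(\gamma)\}$. In the paper's conventions (see the paragraph before Lemma~\ref{lem:properties-m1-m2}) one sets $m=\min\{\val(\alpha),\val(\beta),\val(\gamma)\}$ and then $m_1=\lfloor(\ell-m)/2\rfloor$, $m_2=\lceil(\ell-m)/2\rceil$, so $m_2-m_1\in\{0,1\}$ is simply a parity correction. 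With your definition the claimed dimension $q^{2(\lup-\ldown)+(m_2-m_1)}$ would be wildly wrong in general, and the ``coisotropic structure'' you describe would not match the actual bilinear form $\mcB_A$ on $\Gamma=\bfJ(\lri_\ell)^{m_1}\bfG(\lri_\ell)^{\ldown}/\bfJ(\lri_\ell)^{m_2}\bfG(\lri_\ell)^{\lup}$, whose radical the paper computes to be $\cong\kk^{3(m_2-m_1)}$.

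Two further technical steps are glossed over in your outline but are where the real work lies. For part~(b), the assertion that $\sigma$ extends to all of $\iga$ hinges on showing that $\psi_{A_{\lup}}|_{\bfJ(\lri_\ell)^{\lup}}$ extends to $\bfJ(\lri_\ell)$; the paper does this by explicitly determining the commutator subgroup $[\bfJ(\lri_\ell),\bfJ(\lri_\ell)]$ via a surjection $\bfJ(\lri_\ell)\to\bfG_1(\lri_\ell)\times\bfG_1(\lri_\ell)$ and checking that $\psi_{A_{\lup}}$ kills $[\bfJ,\bfJ]\cap\bfJ^{\lup}$. For part~(a), the identification of the stabiliser $\Stab_{\iga}(\psi_{A_{\ell-m_2}})=\bfH(\lri_\ell)\bfJ(\lri_\ell)^{m_1}\bfG(\lri_\ell)^{\ldown}$ (where $\bfH$ is one of three explicit ring-schemes depending on which of $\alpha,\beta,\gamma$ has minimal valuation) is a nontrivial matrix computation that drives the whole construction; without it one cannot verify that the Heisenberg representation extends to its inertia group or that the induction to $\bfG(\lri_\ell)$ is irreducible.
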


To prove this result, we first describe the inertia group  $\iga$ in Section~\ref{sec:structure-of-stabilisers}. Next, we define an extension of $\psi_{A_\ldown}$ to a normal subgroup of $\iga$ in Section~\ref{psi.l.down}. Finally, we consider the case of $m<\lup$ and $m \geq \lup$ in Sections~\ref{subsec:m-smaller-ldown} and \ref{subsec:case-m-greater-ldown}, respectively, and complete the proof of the above theorem.  Before proceeding with the details, we record some of the basic properties of $m_1$ and $m_2$ with $m \leq \ldown$. We shall use the following lemma throughout without mentioning it explicitly.
\begin{lemma}
\label{lem:properties-m1-m2}
For $0 \leq m \leq \ldown$, $\mdown$ and $\mup$ satisfy the following: 
  \begin{enumerate}
    \item  $0 \leq \upu - \downu \leq 1$.
    \item $2 \upu + m \geq \ell $.
    \item If $\upu =\downu+1$ then $2 \downu + m = \ell-1$.
    \item If $m < \ldown$ then $2 \downu + \ldown \geq \ell$.
    \item For $m < \ldown$, $2 \upu \geq 2\downu \geq \lup$ and $4 \upu \geq 4\downu \geq \ell$.
    \item $m_2 + m \geq \lup$.
    \item For $m = \ldown$, $m_1 = \lfloor \frac{\lup}{2} \rfloor $ and $m_2 = \lceil \frac{\lup}{2} \rceil  $.
  \end{enumerate}
\end{lemma}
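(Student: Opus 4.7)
All seven statements are elementary identities and inequalities about the integer-valued functions $m_1=\lfloor (\ell-m)/2\rfloor$ and $m_2=\lceil(\ell-m)/2\rceil$, and analogously $\ell_1,\ell_2$. The unified strategy is to split into the two parities of $\ell-m$ (and, where needed, the two parities of $\ell$), and to use the exact formulas
\[
m_1=\tfrac{\ell-m-\epsilon}{2},\qquad m_2=\tfrac{\ell-m+\epsilon}{2},\qquad \epsilon\in\{0,1\},
\]
together with $m_1+m_2=\ell-m$ and $m_2-m_1=\epsilon$.

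Parts (1)--(3) and (7) are then immediate: (1) because $m_2-m_1\in\{0,1\}$ by the above; (2) because $m_2\geq (\ell-m)/2$, so $2m_2+m\geq\ell$; (3) because $m_2>m_1$ forces $\epsilon=1$ and hence $2m_1=\ell-m-1$; (7) by substituting $m=\ell_1$ and using $\ell-\ell_1=\ell_2$ in the formulas for $m_1,m_2$.

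For (4)--(6) the key input is the hypothesis $m<\ell_1$ (equivalently $m\leq\ell_1-1$), which I would convert into the single inequality $\ell-m\geq\ell_2+1$ by splitting on the parity of $\ell$. From this,
\[
2m_1\geq (\ell-m)-1\geq \ell_2,
\]
which gives the middle inequality in (5); the outer inequality $2m_2\geq 2m_1$ is (1); and $4m_1\geq 2(\ell-m-1)\geq 2\ell_2\geq\ell$ gives the last inequality of (5). Statement (4), $2m_1+\ell_1\geq\ell$, then follows from $2m_1\geq\ell_2$ together with $\ell_1+\ell_2=\ell$. Finally, (6) follows from $2m_2\geq \ell-m$, which rearranges to $m_2+m\geq (\ell+m)/2\geq \ell/2$; taking ceilings and checking the two parities of $\ell-m$ gives $m_2+m\geq\ell_2$.

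There is no conceptual obstacle here; the only thing to keep careful about is the floor/ceiling casework, which is why I would organise the argument around the single auxiliary inequality $\ell-m\geq\ell_2+1$ under the hypothesis $m<\ell_1$, so that (4), (5), and (6) follow from one computation rather than four separate ones.
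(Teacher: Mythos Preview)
Your proposal is correct and follows essentially the same elementary route as the paper: both arguments use $m_1+m_2=\ell-m$, $m_2-m_1\in\{0,1\}$, and the inequality $2m_1\geq\ell-m-1$, and both deduce (5) from (4) (or vice versa) via $2m_1\geq\ell_2$. One small expository point: you group (6) with (4)--(5) as depending on the hypothesis $m<\ell_1$, but (6) is stated for all $0\le m\le\ell_1$ and your own argument for it (via $m_2+m\ge\ell/2$ and integrality) correctly does not use that hypothesis, so you should decouple it in the write-up.
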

\begin{proof} (1) follows from the definition of $\mdown$ and $\mup$. For (2), $\mup + \mdown = \ell-m$ implies
  \[
    2 \mup + m \geq \mup + \mdown +  m = \ell.
  \]
For (3), the condition $\mup = \mdown + 1$ implies $2 m_1 + 1 = \ell -m$.  For (4), $2 \mdown + \ldown > 2 \mdown + m \geq  \ell-1$ implies $2 \mdown + \ldown \geq \ell$. (5) follows from (4). (6) follows from the following:
  \[
    \mup + m \geq  \frac{\ell + m}{2} \geq \frac{\ell}{2}. 
  \] (7) follows from the definition of $\mdown$ and $\mup$. \qedhere
\end{proof}

\subsection{Structure of stabilisers} 
\label{sec:structure-of-stabilisers}

 Let~$\wt{\bfJ}=\wt{\bfJ}_\delta$ be the sub-ring-scheme of $\Mat_3$ whose $R$-points for every $\lri_\ell$-algebra $R$ are 
\[
  \wt{\bfJ}(R) = \left\{
  \mat {a}{x}{z}{ \delta y}{b}{y}{\delta^2 z}{\delta x}{a} \mid  a,b,x,y,z \in R \right\}.
\]
Let $\wt{\bfH}_0$,  $\wt{\bfH}_\infty$ and $\wt{\bfH}_{\star}$ be the ring-schemes (all dependent on $\alpha,\beta,\gamma,\delta$) whose $R$-points are given by
\[
  \begin{split}
    &\wt{\bfH}_0(R)= \left\{
    \mat {a}{x}{z}{ \alpha z + \delta y}{b}{y}{\alpha x +\gamma z +\delta^2 z}{\beta z + \delta x}{a} \mid  \begin{matrix}  a,y,z \in R   \qquad   \qquad ~~  \\ \quad b = a+\delta z-\alpha^{-1}\gamma y  \\ x = \alpha^{-1}\beta y     \qquad \quad \qquad \end{matrix}  \right\}, \\
    & \wt{\bfH}_\infty(R)=  \left\{
    \mat {a}{x}{z}{ \alpha z + \delta y}{b}{y}{\alpha x +\gamma z +\delta^2 z}{\beta z + \delta x}{a} \mid  \begin{matrix}  a,y,z \in R   \qquad   \qquad ~~  \\ \quad b = a+\delta z-\beta^{-1}\gamma x  \\ y = \alpha \beta^{-1} x    \qquad \quad \qquad \end{matrix}  \right\}, \\
    & \wt{\bfH}_\star(R)=  \left\{
    \mat {a}{x}{z}{ \alpha z + \delta y}{b}{y}{\alpha x +\gamma z +\delta^2 z}{\beta z + \delta x}{a} \mid  \begin{matrix}  a,y,z \in R   \qquad   \qquad ~~  \\ x = \beta \gamma^{-1}(a-b + \delta z) \\ y = \alpha \gamma^{-1}(a-b + \delta z)   \end{matrix}  \right\}.\\
  \end{split}
\]

In our application the various ring-schemes will become relevant according to the parameters $\alpha, \beta$ and $\gamma$, and reflect the different  nature of solutions to equations \eqref{condition-on-centralizer01}-\eqref{condition-on-centralizer03}. Specifically, $\wt{\bfH}_0$ corresponds to the case $\val(\alpha)=m=\min\{\val(\alpha),\val(\beta),\val(\gamma)\}$, namely $\alpha$ takes a minimal valuation, $\wt{\bfH}_\infty$ corresponds to the case $\val(\beta)=m$, and $\wt{\bfH}_\star$ to the case $\val(\gamma)=m$. Note that under these circumstances a term like $\alpha^{-1}\beta$ for $\val(\alpha) \leq \val(\beta)$ is well-defined in $\lri_{\ell-m}$, even if $\alpha$ is not invertible. To make it well-defined in $\lri_\ell$ one can lift $\alpha$ and $\beta$ to $\lri_{\ell+m}$. Different lifts may result in different ring-schemes but that will not affect our applications.
\smallskip

For $\wt{\bfU} \in  \{\wt{\bfJ}, \wt{\bfH}_0, \wt{\bfH}_\infty, \wt{\bfH}_\star\}$, let ${\bfU}=\wt{\bfU} \cap \bfG$ be the corresponding group sub-scheme of $\bfG$ and let $\mfj=\wt{\bfJ} \cap \mfg$,  $\mfh_0=\wt{\bfH}_0 \cap \mfg$, $\mfh_\infty=\wt{\bfH}_\infty \cap \mfg$ and $\mfh_\star=\wt{\bfH}_\star \cap \mfg$ be the corresponding Lie sub-scheme of $\mfg$. Let $\mathcal{M}_{A,k}$ denote the set of  matrices of the form
\begin{equation}\label{X}
  X = \mat {a}{x}{z}{\alpha z+ \delta y}{b}{y}{\alpha x+ \gamma z + \delta^2 z}{\beta z + \delta x}{a}  \in \Mat_3(\Lri_\ell),
\end{equation}
satisfying
\begin{eqnarray}
  \label{condition-on-centralizer01}
  \beta y & \equiv_{k} & \alpha x \\
  \gamma y  & \equiv_{k} &  \alpha (a-b + \delta z) \label{condition-on-centralizer02}\\
  \gamma x & \equiv_{k} &  \beta (a-b + \delta z).
  \label{condition-on-centralizer03}
\end{eqnarray}

We now compute the stabilisers under the adjoint action of $\bfG(\lri_\ell)$ and its congruence subgroups on the image $A_k \in \mfg(\lri_k)$ of an element of the form $A=A(\ell, m; \omega, \delta,  \alpha,\beta,\gamma) \in \mfg(\lri_\ell)$ for $k \le \ell$; see~\eqref{form.of.char}. For $j \le k \le \ell,$ let $\mathfrak{s}^j_{A,k}\subset \mfg(\lri_\ell)$ denote the $j$-th congruence  annihilator in $\mfg(\lri_\ell)$ of $A_k$, and similarly let  $\mathcal{S}^j_{A,k} \subset \bfG(\lri_\ell)$ be   the $j$-th congruence  subgroup of the stabiliser  in $\bfG(\lri_\ell)$ of $A_k$. Also, let $\mathcal{M}^j_{A,k}=   \mathcal{M}_{A,k} \cap \mfg(\lri_\ell)^j$.  
\smallskip

\begin{proposition}
  \label{Prop:S-basic} With the above notation, let $\bfH$ be one of the group schemes $\bfH_0$, $\bfH_\infty$ or $\bfH_\star$ according to the minimal valuation of $\alpha, \beta$ and $\gamma$. Then the following hold:
  \begin{enumerate}

    \item $\mcM^j_{A,k}$ is an $\Lri_\ell$-subalgebra for $j \ge \min\{\frac{\ell-m}{2}, \ell-k\}$.
    \item $\mathfrak{s}^j_{A,k}=M^j_{A,k} \cap \mfg(\lri_\ell)+\mfg(\lri_\ell)^{\max\{j,k\}}=\mfh(\lri_\ell)^j + \mfj(\lri_\ell)^{\max\{j,k-m\}} + \mfg(\lri_\ell)^{\max\{j,k\}}$.

    \item $\mcS^j_{A,k}=\left(\mcM^j_{A,k} \cap \bfG(\lri_\ell)\right)\bfG(\lri_\ell)^{\max\{j,k\}}=\bfH(\lri_\ell)^j \bfJ(\lri_\ell)^{\max\{j,k-m\}} \bfG(\lri_\ell)^{\max \{ j,k\}}$.

    \item $\bfH(\lri_\ell)^j \subset \mcS^j_{A,\ell}$ for all \,\, $0 \leq j \leq \ell$.

  \end{enumerate}

\end{proposition}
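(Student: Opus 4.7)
The plan is to argue first at the Lie algebra level and then transfer to the group via the truncated exponential. I would begin with part (2): for $Y \in \mfg(\lri_\ell)^j$, the condition $[Y, A] \equiv 0 \pmod{\pi^k}$ is a system of linear equations on the entries of $Y$, and a direct matrix multiplication using the explicit form $A = rE + \Delta + N$ (with $N$ the strictly lower-triangular piece carrying $\alpha, \beta, \gamma$) shows that these equations reduce modulo $\pi^{\max\{j,k\}}$ precisely to the three relations \eqref{condition-on-centralizer01}--\eqref{condition-on-centralizer03} together with the requirement that $Y$ have the shape \eqref{X}. Since elements of $\mfg(\lri_\ell)^{\max\{j,k\}}$ contribute trivially to $[Y, A] \pmod{\pi^k}$, this yields the first equality of (2). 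The second equality is then obtained by splitting a solution $(a,b,x,y,z)$ of \eqref{condition-on-centralizer01}--\eqref{condition-on-centralizer03}: the triple $(a,b,z)$ is unconstrained at level $j$ and contributes $\mfh(\lri_\ell)^j$, while the pair $(x,y)$ is forced to level $\max\{j, k-m\}$ by the right-hand sides (since each of $\alpha, \beta, \gamma$ has valuation at least $m$), contributing $\mfj(\lri_\ell)^{\max\{j,k-m\}}$.

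For part (1), I would expand $X_1 X_2$ for $X_1, X_2 \in \mcM^j_{A,k}$ and track the valuations of the cross terms. Pure quadratic terms lie in $\mfg(\lri_\ell)^{2j}$, while the corrections involving $\alpha, \beta, \gamma$ carry an extra factor of $\pi^m$ and so lie in $\mfg(\lri_\ell)^{2j+m}$. The hypothesis $j \ge \min\{(\ell-m)/2,\, \ell-k\}$ guarantees $2j+m \ge \ell$ or $2j \ge k$, so both types of cross terms vanish modulo $\pi^{\max\{j,k\}}$, and $X_1 X_2$ is again in the shape \eqref{X} satisfying \eqref{condition-on-centralizer01}--\eqref{condition-on-centralizer03} to the required precision.

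Parts (3) and (4) follow quickly. For (3), apply the truncated exponential $\exp : \mfg(\lri_\ell)^j \to \bfG(\lri_\ell)^j$, which is a bijection for $p > 3$ and respects the congruence filtration; under this map the Lie algebra decomposition of (2) transports to the asserted group decomposition, with product in place of sum since the relevant commutators lie in strictly deeper congruence layers and hence can be absorbed into the $\bfG(\lri_\ell)^{\max\{j,k\}}$ factor. For (4), the ring-schemes $\wt{\bfH}_0, \wt{\bfH}_\infty, \wt{\bfH}_\star$ are defined so that their $R$-points satisfy \eqref{condition-on-centralizer01}--\eqref{condition-on-centralizer03} with equality, so each element of $\bfH(\lri_\ell)^j$ centralises $A$ exactly and therefore belongs to $\mcS^j_{A, \ell}$ for every $0 \le j \le \ell$.

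The main obstacle is the entry-by-entry commutator calculation and the verification that the three definitions of $\wt{\bfH}$ -- one for each choice of which of $\val(\alpha), \val(\beta), \val(\gamma)$ realises the minimum $m$ -- together exhaust the solutions of \eqref{condition-on-centralizer01}--\eqref{condition-on-centralizer03}; in each regime one of the three equations becomes a consequence of the other two, and this reduction must be done separately in each case. The valuation bookkeeping for part (1), in particular pinning down the precise role of the hypothesis $j \ge \min\{(\ell-m)/2,\, \ell-k\}$, is the other delicate point.
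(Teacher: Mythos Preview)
Your overall strategy matches the paper's: both proofs compute $[X,A]$ directly to obtain the seven linear relations giving parts~(2) and~(3), and both handle (1) by checking that the product of two elements of $\mcM^j_{A,k}$ again has the required shape and satisfies the three congruences. Parts~(3) and~(4) are fine.

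There is, however, a genuine gap in your sketch of~(1). You propose to control the product $X_1X_2$ purely by valuation bookkeeping: quadratic terms in $\pi^{2j}$, terms carrying $\alpha,\beta,\gamma$ in $\pi^{2j+m}$, and then invoke $2j+m\ge\ell$ or $2j\ge k$. But the assertion is that $X_1X_2$ lies in $\mcM^j_{A,k}$ itself, which requires the lower-left entries of $X_1X_2$ to equal the specified functions of its upper-right entries \emph{exactly} in $\Lri_\ell$, not merely modulo $\pi^{\max\{j,k\}}$. Valuation bounds alone do not give this: for instance, the identity checked in the paper's line~(a),
\[
X(2,1)-\alpha X(1,3)-\delta X(2,3)=z_2\bigl(\gamma y_1-\alpha(a_1-b_1+\delta z_1)\bigr)+(x_2y_1-x_1y_2)\alpha\equiv_\ell 0,
\]
uses the congruence \eqref{condition-on-centralizer02} for $X_1$ (making the first bracket $\equiv_k 0$) together with $j\ge\ell-k$, and uses $2j+m\ge\ell$ only on the second term. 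Without invoking the relations \eqref{condition-on-centralizer01}--\eqref{condition-on-centralizer03} for the factors, the first term is only in $\pi^{j+m}$, which need not reach $\pi^\ell$. So the shape-preservation step is not a pure valuation estimate; it needs the congruence relations as input, and that is exactly what the paper's six explicit checks~(a)--(f) supply.

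A smaller point on~(2): your description of the second equality (``$(a,b,z)$ unconstrained contributes $\mfh$, $(x,y)$ forced deeper contributes $\mfj$'') does not match the definitions. The Lie algebra $\mfh$ here is that of one of $\bfH_0,\bfH_\infty,\bfH_\star$, parametrised for example (in the $\bfH_0$ case) by $a,y,z$ with $b,x$ determined; and $\mfj$ is five-dimensional with all of $a,b,x,y,z$ free. The correct splitting is: subtract from $X$ the element of $\mfh(\lri_\ell)^j$ with the same free parameters, and observe (using division by whichever of $\alpha,\beta,\gamma$ has valuation $m$) that the residual lies in $\mfj(\lri_\ell)^{\max\{j,k-m\}}$. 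Your underlying idea is right but the parametrisation needs to be stated correctly.
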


\begin{proof}
  \begin{enumerate}
    \item[(1)]

      Clearly $\mcM^j_{A,k}$ is closed under addition for every $0 \le j,k
        \le \ell$. Multiplying two matrices $X_1,X_2 \in \mcM^j_{A,k}$ we check that the entries $(2,1)$, $(3,1)$ and $(3,2)$ of $X=X_1X_2$ conform to the pattern in \eqref{X} and that equations  \eqref{condition-on-centralizer01}, \eqref{condition-on-centralizer02} and \eqref{condition-on-centralizer03} hold. Indeed, under the assumption that $X_i \in \mcM^j_{A,k}$ we have  $x_i,y_i,z_i,a_i-1,b_i-1 \in \pi^j\Lri_\ell$, hence
      \[
        \begin{split}
          (a) ~  &X(2,1)-\alpha X(1,2)-\delta X(2,3)= z_2(\gamma y_1-\alpha \delta z_1 - \alpha a_1 + \alpha b_1) + (x_2y_1 - x_1y_2)\alpha \equiv_\ell 0  \\
          (b) ~ &X(3,1)-(\delta^2 +\gamma)X(1,3)- \alpha X(1,2)  = x_1(\alpha(\delta z_2 + a_2 - b_2)-\gamma y_2)  + z_1 \delta(y_2\beta  - \alpha x_2)  \equiv_\ell 0 \\
          (c) ~  &X(3,2)-\beta X(1, 3) - \delta X(1,2) =  z_1(\gamma x_2-\delta \beta z_2 - \beta a_2  + \beta b_2) - \beta x_1 y_2 + \alpha x_1x_2  \equiv_\ell 0 \\
          (d) ~  &\alpha X(1, 2) - \beta X(2,3) =   (\delta \alpha x_2 z_1 +\alpha a_1x_2) -  \beta b_1y_2      - (\delta \beta y_1 z_2 + \beta a_2y_1 - \alpha b_2 x_1 )\\
          & \qquad \qquad \qquad \qquad \qquad  \qquad \equiv_k  x_2( \alpha \delta z_1 + \alpha a_1  - \alpha b_1)      - y_1(\delta \beta z_2 + \beta a_2 - \beta b_2)  \equiv_k 0 \\
          (e) ~ &  \gamma X(2,3) - \alpha(\delta X(1, 3) + X(1, 1) - X(2, 2) ) \equiv_k 0 \\
          (f) ~ &  \gamma X(1, 2) - \beta(\delta X(1, 3) + X(1, 1) - X(2, 2) )\equiv_k 0,
        \end{split}
      \]
      
\noindent where we repeatedly used $j \ge \min\{\frac{\ell-m}{2}, \ell-k\}$, and the relations \eqref{condition-on-centralizer01}, \eqref{condition-on-centralizer02} and \eqref{condition-on-centralizer03} for $X_i$ and omitted the details in the last two cases which are similar.

    \item[(2)] To prove the first equality, we note that a matrix $X$ commutes with $A$ modulo $\pi^k$, i.e. 
      \[
        XA={\mat a  x z {x'} b y {z'} {y'} {c}}\newA \equiv_k \newA  \mat a  x z {x'} b y {z'} {y'} {c}=AX
      \]
      if and only if
      \begin{eqnarray}
        z' & \equiv_k & \alpha x + \gamma z+ \delta^2 z \\
        y' & \equiv_k & \beta z +  \delta x\\
        c & \equiv_k & a \\
        x' & \equiv_k & \alpha z +  \delta y \\
        \beta y & \equiv_k & \alpha x \\
        \gamma y - \alpha \delta  z& =_k & \alpha (a-b) \\
        \gamma x - \beta \delta z & \equiv_k & \beta (a-b)
      \end{eqnarray}
      It follows from these relations that $\mathfrak{s}^j_{A,k}=\mcM^j_{A,k} \cap \mfg(\lri_\ell)+\mfg(\lri_\ell)^{\max\{j,k\}}$. The second equality follows by observing that
      $$ \mcM^j_{A,k} \cap \mfg(\lri_\ell) = \mfh(\lri_\ell)^j + \mfj(\lri_\ell)^{\max\{j,k-m\}} + \mfg(\lri_\ell)^{\max\{j,k\}}. $$
    \item[(3)] This follows from (2) and from the definition of $\mcS_{A, k}^j$ and $\mfs_{A,k}^j$.
    \item[(4)] This follows directly from (3). \qedhere
  \end{enumerate}
\end{proof}
Recall that $\iga$ denote the stabiliser (or the inertia group) of $\psi_{A_{\ldown}}$ in $\bfG(\lri_\ell)$. 
\begin{proposition}
  \label{prop:psiA-stabiliser}
  The stabiliser $\iga$ of $\psi_{A_{\ldown}}$ satisfies the following:
  \begin{itemize}
    \item[(a)] $\iga = \begin{cases}  \bfH(\lri_\ell) \bfJ(\lri_\ell)^{\ldown-m} \bfG(\lri_\ell)^{\ldown} & \mathrm{for} \,\, m < \ldown, \\ 
    \bfJ(\lri_\ell) \bfG(\lri_\ell)^{\ldown} & \mathrm{for}\,\, m \geq \ldown. 
    \end{cases} $
    \item[(b)] $\iga^\downu =  \bfJ(\lri_\ell)^{\mdown } \bfG(\lri_\ell)^{\ldown}$.

  \end{itemize}
  \end{proposition}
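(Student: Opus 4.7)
My approach is to reduce both parts to the product formulas for stabilisers in Proposition~\ref{Prop:S-basic}, by first identifying $\iga$ with the adjoint stabiliser of $A_\ldown$ modulo $\pi^\ldown$, and then simplifying the resulting decomposition via an absorption argument.

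Under the group isomorphism $\bfG(\lri_\ell)^{\lup} \xrightarrow{\sim} (\mfg(\lri_\ldown),+)$, $I+\pi^{\lup}X \mapsto X$, the character $\psi_{A_\ldown}$ reads $X \mapsto \psi(\tr(A_\ldown X))$. Non-degeneracy of the trace pairing on $\mfg(\lri_\ldown)$ (Section~\ref{subsec:char.and.coad.orbits}) implies that $g \in \bfG(\lri_\ell)$ fixes $\psi_{A_\ldown}$ under conjugation if and only if $\Ad(g)A_\ldown \equiv A_\ldown \pmod{\pi^\ldown}$. Hence $\iga = \mcS^{0}_{A,\ldown}$ and $\iga^{\downu} = \mcS^{\downu}_{A,\ldown}$. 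Applying Proposition~\ref{Prop:S-basic}(3) with $k=\ldown$ then gives
\[
\iga = \bfH(\lri_\ell)\,\bfJ(\lri_\ell)^{\max\{0,\ldown-m\}}\,\bfG(\lri_\ell)^{\ldown},
\qquad
\iga^{\downu} = \bfH(\lri_\ell)^{\downu}\,\bfJ(\lri_\ell)^{\max\{\downu,\ldown-m\}}\,\bfG(\lri_\ell)^{\max\{\downu,\ldown\}}.
\]
For (a) with $m < \ldown$, the first formula gives $\bfH(\lri_\ell)\bfJ(\lri_\ell)^{\ldown-m}\bfG(\lri_\ell)^{\ldown}$ directly. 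When $m \geq \ldown$ the exponent on $\bfJ$ becomes $0$, yielding $\bfH(\lri_\ell)\bfJ(\lri_\ell)\bfG(\lri_\ell)^{\ldown}$, which needs to be further simplified. For (b), the elementary inequalities $\downu \leq \ldown$ and $\downu \geq \ldown-m$ (equivalent to $\upu \leq \lup$, immediate from $\upu = \lceil(\ell-m)/2\rceil \leq \lceil\ell/2\rceil = \lup$) simplify the maxes so that $\iga^{\downu} = \bfH(\lri_\ell)^{\downu}\bfJ(\lri_\ell)^{\downu}\bfG(\lri_\ell)^{\ldown}$.

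The remaining step is to absorb $\bfH$ into $\bfJ\,\bfG^{\ldown}$ in both cases. Comparing the defining schemes, $\bfH \in \{\bfH_0,\bfH_\infty,\bfH_\star\}$ differs from $\bfJ$ only by the extra additive summands $\alpha z$, $\alpha x + \gamma z$, $\beta z$ appearing in positions $(2,1),(3,1),(3,2)$ of the pattern~\eqref{X}, together with linear constraints relating the diagonal and $x,y$ whose coefficients (such as $\alpha^{-1}\beta$) have non-negative valuation. For $h \in \bfH(\lri_\ell)^{j}$ one has $x,y,z \in \pi^{j}\lri_\ell$, so all these extra summands lie in $\pi^{m+j}\lri_\ell$. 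Under the hypothesis $m+j \geq \ldown$ they vanish modulo $\pi^\ldown$; retaining the diagonal and $x,y,z$-entries of $h$ (suitably adjusted near the identity by a standard smoothness/Hensel argument to enforce exact unitarity when $\bfG = \GU_3$) produces an element $\tilde{\jmath} \in \bfJ(\lri_\ell)^{j}$ with $\tilde{\jmath} \equiv h \pmod{\pi^\ldown}$, i.e.\ $h \in \bfJ(\lri_\ell)^{j}\bfG(\lri_\ell)^{\ldown}$. Taking $j=0$ with $m \geq \ldown$ completes (a); taking $j=\downu$, together with the inequality $m+\downu \geq \ldown$ above, completes (b).

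The only genuine technical content is this $\bfH$-into-$\bfJ$ absorption; within it, the one mildly subtle point is the unitarity perturbation (a short application of smoothness of the group schemes defining $\bfG$). Everything else is valuation bookkeeping on $\ldown,\lup,\downu,\upu$ that follows immediately from the definitions and Lemma~\ref{lem:properties-m1-m2}, together with the structural input of Proposition~\ref{Prop:S-basic}.
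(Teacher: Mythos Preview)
Your proof is correct and follows the same route as the paper. The paper's own argument is a two-line reduction: it notes that $\iga=\mcS^{0}_{A,\ldown}$ and $\iga^{\mdown}=\mcS^{\mdown}_{A,\ldown}$, then invokes Proposition~\ref{Prop:S-basic}(3) together with exactly the inequalities $\mdown\ge\ldown-m$ and $\mdown+m\ge\ldown$ that you spell out. Your contribution is to make explicit the absorption $\bfH(\lri_\ell)^{j}\subset\bfJ(\lri_\ell)^{j}\bfG(\lri_\ell)^{\ldown}$ when $m+j\ge\ldown$, which the paper leaves implicit in its citation of those inequalities; the cleanest phrasing of that step is exactly the one you outline, namely reduce $h$ modulo $\pi^{\ldown}$, observe that the extra $\alpha z,\beta z,\alpha x+\gamma z$ contributions vanish so the image lands in $\bfJ(\lri_\ldown)^{j}$, and then lift back via surjectivity of $\bfJ(\lri_\ell)^{j}\twoheadrightarrow\bfJ(\lri_\ldown)^{j}$ (smoothness of the commutative scheme $\bfJ$).
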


\begin{proof} By using the definition of $\psi_{A_\ldown},$ we obtain $\iga = \mcS_{A, \ldown}^0$ and $\iga^{\mdown} = \mcS_{A, \ldown}^{\mdown}$. Therefore (a) and (b) follow from $\mup \geq \mdown \geq \ldown-m$, $\mdown + m \geq \ldown$ and Proposition~\ref{Prop:S-basic}. 
\end{proof}

\subsection{Extension of $\psi_{A_\ldown}$}\label{psi.l.down} In this section we analyse a specific normal subgroup of $\iga$ and extend $\psi_{A_\ldown}$ to this subgroup. 

\begin{lemma}
    \begin{enumerate}
         \item[(a)] The group $\bfJ(\lri_\ell)^{\mup} \bfG(\lri_\ell)^{\lup}$ is a normal subgroup of $\iga$.
    \item[(b)] The quotient  $\iga^{\downu}/   ( \bfJ(\lri_\ell)^{\mup} \bfG(\lri_\ell)^{\lup})$ is an abelian group.
    \end{enumerate}
\end{lemma}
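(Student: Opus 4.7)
For (a), since $\bfG(\lri_\ell)^{\lup}$ is a principal congruence subgroup of $\bfG(\lri_\ell)$, it is normal in $\iga$, so it suffices to show $g\,\bfJ(\lri_\ell)^{\mup}\,g^{-1}\subseteq \bfJ(\lri_\ell)^{\mup}\bfG(\lri_\ell)^{\lup}$ for every $g$ in a generating set of $\iga$. By Proposition~\ref{prop:psiA-stabiliser}(a), for $m<\ldown$ such a generating set is $\bfH(\lri_\ell)\cup\bfJ(\lri_\ell)^{\ldown-m}\cup\bfG(\lri_\ell)^{\ldown}$ (the case $m\ge\ldown$ is the simpler one where $\bfH$ is replaced by all of $\bfJ$). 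The factor $\bfJ(\lri_\ell)^{\ldown-m}$ clearly normalises $\bfJ(\lri_\ell)^{\mup}$ since both are congruence subgroups of $\bfJ$; the factor $\bfG(\lri_\ell)^{\ldown}$ contributes commutators of level at least $\ldown+\mup\ge\lup$.

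The nontrivial step is conjugation by $\bfH(\lri_\ell)$, and here I would first establish the multiplicative inclusion $\bfH(\lri_\ell)\subseteq\bfJ(\lri_\ell)\,\bfG(\lri_\ell)^{m}$. A direct comparison of the matrix shapes of $\bfH_\bullet$ and $\bfJ$ shows that the element $j\in\bfJ$ sharing the same free parameters $(a,b,x,y,z)$ with a given $h\in\bfH_\bullet$ differs from $h$ only in three entries, each carrying a factor of $\alpha$, $\beta$ or $\gamma$, hence $h-j\in\pi^m\mfg(\lri_\ell)$ and therefore $hj^{-1}\in\bfG(\lri_\ell)^m$. Writing $h=jg$ with $j\in\bfJ(\lri_\ell)$ and $g\in\bfG(\lri_\ell)^m$, and any $u\in\bfJ(\lri_\ell)^{\mup}$, one computes
\[
huh^{-1}=j(gug^{-1})j^{-1}=j\bigl(u\cdot[u^{-1},g^{-1}]\bigr)j^{-1}.
\]
The inner commutator $[u^{-1},g^{-1}]$ lies in $[\bfJ(\lri_\ell)^{\mup},\bfG(\lri_\ell)^m]\subseteq\bfG(\lri_\ell)^{\mup+m}\subseteq\bfG(\lri_\ell)^{\lup}$ by Lemma~\ref{lem:properties-m1-m2}(6), and conjugation by $j\in\bfJ(\lri_\ell)$ preserves both $\bfJ(\lri_\ell)^{\mup}$ (a congruence subgroup of $\bfJ$) and $\bfG(\lri_\ell)^{\lup}$ (normal in $\bfG$), so $huh^{-1}\in\bfJ(\lri_\ell)^{\mup}\bfG(\lri_\ell)^{\lup}$.

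For (b), by (a) it suffices to show that the commutator subgroup $[\iga^{\mdown},\iga^{\mdown}]$ lies in $N:=\bfJ(\lri_\ell)^{\mup}\bfG(\lri_\ell)^{\lup}$. Using the decomposition $\iga^{\mdown}=\bfJ(\lri_\ell)^{\mdown}\bfG(\lri_\ell)^{\ldown}$ from Proposition~\ref{prop:psiA-stabiliser}(b), I would expand the commutator of two elements $u_1v_1,u_2v_2$ (with $u_i\in\bfJ(\lri_\ell)^{\mdown}$ and $v_i\in\bfG(\lri_\ell)^{\ldown}$) using the standard identity $[ab,cd]=[a,cd]^b[b,cd]$ into a product of conjugates of the basic commutators $[u_1,u_2]\in\bfJ(\lri_\ell)^{2\mdown}$, $[u_i,v_j]\in\bfG(\lri_\ell)^{\mdown+\ldown}$ and $[v_1,v_2]\in\bfG(\lri_\ell)^{2\ldown}$. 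The inclusions $\bfJ(\lri_\ell)^{2\mdown}\subseteq\bfJ(\lri_\ell)^{\mup}$ and $\bfG(\lri_\ell)^{\mdown+\ldown},\bfG(\lri_\ell)^{2\ldown}\subseteq\bfG(\lri_\ell)^{\lup}$ then follow from Lemma~\ref{lem:properties-m1-m2}, parts (1) and (5) (which give $2\mdown\ge\lup\ge\mup$ in the relevant range $m<\ldown$), together with the obvious $2\ldown\ge\lup$. By (a), $N$ is normal in $\iga$, so the conjugates also remain in $N$.

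The main obstacle I anticipate is the passage from the additive estimate $h-j\in\pi^m\mfg(\lri_\ell)$ to the multiplicative decomposition $h=jg$ of step (a); this uses invertibility of $j$ in $\bfJ(\lri_\ell)$ and is why the hypothesis $p>3$ enters (to control the denominators appearing when one of $\alpha,\beta,\gamma$ is inverted in the parameterisation of $\bfH_\bullet$). A secondary difficulty is tracking the small-$\mdown$/$\mup$ edge cases (when $m$ is close to $\ell$); there, the case $m\ge\ldown$ of Proposition~\ref{prop:psiA-stabiliser}(a) gives the simpler description $\iga=\bfJ(\lri_\ell)\bfG(\lri_\ell)^{\ldown}$, and one verifies (b) directly by showing that the image of $\bfG(\lri_\ell)^{\ldown}$ in $\iga/N$ lies in the centre and the remaining quotient $\bfJ(\lri_\ell)/\bfJ(\lri_\ell)^{\mup}\bfJ(\lri_\ell)^{\lup}$ is abelian by the estimates already used.
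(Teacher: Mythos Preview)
Your argument is correct and for part~(a) it is essentially the paper's argument written out in greater detail. Both rest on the same observation: elements of $\bfH(\lri_\ell)$ agree with elements of $\bfJ(\lri_\ell)$ modulo $\pi^m$, so conjugating $\bfJ(\lri_\ell)^{\mup}$ by $\bfH(\lri_\ell)$ introduces errors lying in $\bfG(\lri_\ell)^{m+\mup}\subseteq\bfG(\lri_\ell)^{\lup}$ by Lemma~\ref{lem:properties-m1-m2}(6). The paper states this in one line at the Lie-algebra level, whereas you make the multiplicative factorisation $h=jg$ explicit.

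For part~(b) the paper's route is noticeably shorter than yours. It does not invoke the decomposition $\iga^{\mdown}=\bfJ(\lri_\ell)^{\mdown}\bfG(\lri_\ell)^{\ldown}$ from Proposition~\ref{prop:psiA-stabiliser}(b) at all: it simply observes that $\iga^{\mdown}\subseteq\bfG(\lri_\ell)^{\mdown}$ by definition of the congruence filtration, whence $[\iga^{\mdown},\iga^{\mdown}]\subseteq\bfG(\lri_\ell)^{2\mdown}\subseteq\bfG(\lri_\ell)^{\lup}$ directly. Your bookkeeping of the three commutator types $[u_i,u_j]$, $[u_i,v_j]$, $[v_i,v_j]$ via commutator identities is valid but superfluous, since all of $\bfJ(\lri_\ell)^{\mdown}$ and $\bfG(\lri_\ell)^{\ldown}$ already sit inside $\bfG(\lri_\ell)^{\mdown}$; the single estimate $2\mdown\ge\lup$ subsumes all three of your inclusions at once.

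One correction: your remark that the hypothesis $p>3$ enters through ``denominators appearing when one of $\alpha,\beta,\gamma$ is inverted'' is misplaced. The passage from $h-j\in\pi^m\Mat_3(\Lri_\ell)$ to $hj^{-1}\in\bfG(\lri_\ell)^m$ uses only the invertibility of $j$, which holds because the diagonal entries $a,b$ are units (as $h$ is invertible and $\delta\in\pi\lri_\ell$); no restriction on $p$ is needed here. The exclusion of small primes in the paper is required elsewhere (nondegeneracy of the trace pairing for $\sl_3,\su_3$ and the Baker--Campbell--Hausdorff computations), not for this lemma.
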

\begin{proof}
For~(a), by using $m + \mup \geq \lup$, we note that
  $$  xyx^{-1}  \in \mfj(\cO_\ell)^{\mup} + \mfg(\cO_\ell)^{\lup} $$ for all $x \in \bfH(\cO_\ell)$ and $y \in \mfj(\cO_\ell)^{\mup} + \mfg(\cO_\ell)^{\lup}.$ Therefore
  $(a)$ follows. To prove (b), we note that $[g,h] \in \bfG(\lri_\ell)^{2 \downu} \subseteq \bfG(\lri_\ell)^{\lup}$ for all $g, h \in \iga^{\mdown}$. Therefore the quotient group $\iga/(\bfJ(\lri_\ell)^{\mup} \bfG(\lri_\ell)^{\lup})$ is abelian.
\end{proof}

\begin{proposition}
  \label{prop:Stab-psi-B-m1}  The map $\psi_{A_{\ell - \mup}}: \bfJ(\lri_\ell)^\upu \, \bfG(\lri_\ell)^{\lup} \rightarrow \mathbb C^\times$
  defined by
  \[
    \psi_{A_{\ell - \mup}}(g) = \psi(\tr (A_{\ell - \mup} \log(g))),
  \]
  for all $g \in \bfJ(\lri_\ell)^\upu  \bfG(\lri_\ell)^{\lup}$ is a well-defined character  that extends $\psi_{A_\ldown}.$
\end{proposition}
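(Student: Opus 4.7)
The plan is to verify, in order, that (i) the formula defines a well-defined function on $\bfJ(\lri_\ell)^{\upu}\bfG(\lri_\ell)^{\lup}$; (ii) this function is a group homomorphism; and (iii) it agrees with $\psi_{A_\ldown}$ on the overlap with $\bfG(\lri_\ell)^{\ldown}$, which contains $\bfG(\lri_\ell)^{\lup}$.

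For (i), since $p > 3$ the formal logarithm converges on the pro-$p$ congruence subgroup $\bfG(\lri_\ell)^{1}$, giving an intrinsic $\log(g) \in \mfg(\lri_\ell)$ for any $g$ in our subgroup. Writing $g = jh$ with $j \in \bfJ^{\upu}$ and $h \in \bfG^{\lup}$, the Baker--Campbell--Hausdorff formula yields $\log(g) \in \pi^{\upu}\mfj(\lri_\ell) + \pi^{\lup}\mfg(\lri_\ell)$. Since $A_{\ell-\upu}$ is well-defined modulo $\pi^{\ell-\upu}$ and $\log g$ has valuation at least $\upu$, the product $A_{\ell-\upu}\log g$ is well-defined modulo $\pi^\ell$, so $\psi_{A_{\ell-\upu}}(g)$ makes sense (and is independent of the decomposition $g = jh$ because $\log(g)$ is intrinsic). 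For (iii), on $g \in \bfG(\lri_\ell)^{\lup}$ the difference $A_{\ell-\upu} - A_{\ldown}$ has valuation at least $\ldown$ (using $\ell - \upu \geq \ldown$, i.e.\ $\upu \leq \lup$) and $\log g \in \pi^{\lup}\mfg$, so their product lies in $\pi^{\ldown+\lup}\mfg = \pi^\ell\mfg$, forcing $\psi_{A_{\ell-\upu}}(g) = \psi_{A_\ldown}(g)$; an analogous estimate on $\bfJ^{\upu} \cap \bfG^{\ldown}$ completes the extension claim.

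For (ii), BCH gives $\log(g_1 g_2) = \log g_1 + \log g_2 + \tfrac12[\log g_1, \log g_2] + \cdots$, so the homomorphism property reduces to showing
\[
\tr\bigl(A_{\ell-\upu}[\log g_1, \log g_2]\bigr) \equiv 0 \pmod{\pi^\ell},
\]
together with analogous vanishings for higher BCH brackets, which have strictly larger valuation and are handled identically. Decomposing $\log g_i = X_i + Y_i$ with $X_i \in \pi^{\upu}\mfj(\lri_\ell)$ and $Y_i \in \pi^{\lup}\mfg(\lri_\ell)$, the term $\tr(A_{\ell-\upu}[Y_1, Y_2])$ has valuation $\geq 2\lup \geq \ell$. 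For the remaining terms I use the cyclic identity $\tr(A[X,Z]) = \tr([A,X]Z)$ together with the key observation that $A_{\ell-\upu}$ lies in $\mfj(\lri_\ell) + \pi^m\mfg(\lri_\ell)$ (since $rE + \omega I + \Delta$ is centralised by $\mfj$ while the perturbation $[\alpha,\beta,\gamma]$ has valuation $\geq m$). This gives $[A_{\ell-\upu}, X] \in \pi^{\upu + m}\mfg$ for $X \in \pi^{\upu}\mfj$, so $\tr(A_{\ell-\upu}[X_1, X_2])$ has valuation $\geq 2\upu + m \geq \ell$ by Lemma~\ref{lem:properties-m1-m2}(2), and $\tr(A_{\ell-\upu}[X_i, Y_j])$ has valuation $\geq \upu + m + \lup \geq \ell$ using $\upu + m \geq \lup$ from Lemma~\ref{lem:properties-m1-m2}(6).

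The main obstacle will be the valuation bookkeeping in step~(ii), particularly the cross term $\tr(A_{\ell-\upu}[X_1,Y_2])$: this requires the precise inequality $\upu + m \geq \lup$, and in turn the fact that $A_{\ell-\upu}$ is congruent modulo $\pi^m$ to a genuine element of $\mfj$. A secondary subtlety is that the relevant ``$\mfj$'' in the commutator estimate depends on which of the schemes $\wt{\bfH}_0$, $\wt{\bfH}_\infty$, $\wt{\bfH}_\star$ from Section~\ref{sec:structure-of-stabilisers} the element $A$ belongs to, according to whether $\val(\alpha)$, $\val(\beta)$ or $\val(\gamma)$ attains the minimum $m$; but in all three cases the approximate-centraliser property we need is the same.
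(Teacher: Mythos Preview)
Your approach is essentially identical to the paper's: both use the Baker--Campbell--Hausdorff formula to reduce the homomorphism property to showing $\tr(A_{\ell-\mup}Z)\equiv 0\pmod{\pi^\ell}$ for $Z$ in $[\mfj^{\mup},\mfj^{\mup}]+[\mfj^{\mup},\mfg^{\lup}]+[\mfg^{\lup},\mfg^{\lup}]$, and both invoke the fact that $A_{\ell-\mup}$ commutes (up to $\pi^m$) with $\mfj$ to handle the first two pieces. Your write-up simply unpacks more of the valuation arithmetic than the paper's one-line proof; the decomposition $\log g_i = X_i+Y_i$ and the three inequalities $2\lup\ge\ell$, $2\mup+m\ge\ell$, $\mup+m\ge\lup$ are exactly what the paper's terse justification is relying on.

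One minor correction: your closing paragraph about the schemes $\wt{\bfH}_0,\wt{\bfH}_\infty,\wt{\bfH}_\star$ is off-target. Those schemes describe the \emph{stabiliser} $\bfH$ of $A$, not the Lie algebra $\mfj$ appearing in the domain $\bfJ(\lri_\ell)^{\mup}\bfG(\lri_\ell)^{\lup}$; here $\mfj=\mfj_\delta$ is fixed once $\delta$ is fixed, independently of which of $\alpha,\beta,\gamma$ has minimal valuation. The commutator estimate $[A,\mfj]\subset\pi^m\mfg$ you need comes solely from $rE+\omega I+\Delta\in\wt{\bfJ}_\delta$ (the centraliser of $\bfc$) and $\val(V)\ge m$, not from any case split on $\bfH$. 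This is cosmetic and does not affect your argument.
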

\begin{proof} This proof is straightforward using the Baker-Campbell-Hausdorff formula:
  \[
    \begin{split}
      \psi_{A_{\ell-\mup}}(\exp(X)\exp(Y))&=\psi_{A_{\ell-\mup}}(\exp(X+Y+Z))=\psi(\tr(A_{\ell-\mup}(X+Y+Z)) \\&=\psi(\tr(A_{\ell-\mup}X+A_{\ell-\mup}Y)=\psi_{A_{\ell-\mup}}(\exp(X))\psi_{A_{\ell-\mup}}(\exp(Y))
    \end{split}
  \]
  because $Z \in  \ker(\tr(A_{\ell - \mup} \cdot))$. Indeed $\mfj^{\mup}$ commutes with $A_{\ell - \mup}$ and  $Z \in ([\mfj^{\mup}, \mfj^{\mup}] + [\mfj^{\mup}, \mfg^{\lup}] + [\mfg^{\lup}, \mfg^{\lup}])$ by Baker-Campbell-Hausdorff formula. 
\end{proof}
Note that $(\bfJ(\cO_\ell)^{\mup} \bfG(\cO_\ell)^\lup)/\bfG(\cO_\ell)^\lup$ is abelian, hence every character of $\bfJ(\cO_\ell)^{\mup} \bfG(\cO_\ell)^\lup$ lying above $\psi_{A_{\ldown}}$ is of the form $\psi_{A_{\ell-\mup}}$ for some $A$ lying above $\psi_{A_{\ldown}}.$ 
We now proceed to prove Theorem~\ref{thm:construction}.

\subsection{Proof of Theorem~\ref{thm:construction}(a)} 
\label{subsec:m-smaller-ldown}

In this section, we prove Theorem~\ref{thm:construction}(a). Hence we assume  $m < \lup$ throughout this section. 
 We work here with arbitrary $A_{\ell-\mup}$ and focus on studying $\Irr(\iga \mid \psi_{A_{\ell-\mup}})$. This, by Clifford theory, completely describes $\Irr(\iga \mid \psi_{A_{\ldown}})$.

\begin{proposition}
  \label{prop:Stab-psi-B}
  The stabiliser $\Stab_{\iga}(\psi_{A_{\ell-\mup}})$ of $\psi_{A_{\ell-\mup}}$ satisfies
  \[
    \Stab_{\iga}(\psi_{A_{\ell-\mup}}) = \bfH(\lri_\ell) \bfJ(\lri_\ell)^{\mdown} \bfG(\lri_\ell)^{\ldown}.
  \]
\end{proposition}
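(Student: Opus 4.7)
The plan is to use the decomposition $\iga = \bfH(\lri_\ell)\,\bfJ(\lri_\ell)^{\ldown-m}\,\bfG(\lri_\ell)^{\ldown}$ from Proposition~\ref{prop:psiA-stabiliser}(a) and to analyse which elements within each factor stabilise $\psi_{A_{\ell-\mup}}$. For $g = \exp(X) \in \iga$ acting by conjugation, the key observation (from the definition of $\mfj = \mfj_\delta$ in Section~\ref{sec:structure-of-stabilisers} as the centraliser of $rE + \Delta$) is that $[A, Y] = [N, Y]$ for every $Y \in \mfj$, since $\omega\mathrm{I}$ is central and $Y$ centralises $rE + \Delta$. By cyclicity of trace, the stabilisation condition on $\mfj^{\mup}$ therefore simplifies to
\[
\tr\!\bigl(X\,[N, Y]\bigr) \equiv 0 \pmod{\pi^{\ell}}
\]
for every $Y \in \mfj(\lri_\ell)^{\mup}$, while the condition on $Y \in \mfg^{\lup}$ is automatic for $g \in \iga$.

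For the inclusion $\supseteq$ I will treat the three factors separately. First, $\bfH \subseteq \mcS^0_{A, \ell}$ by Proposition~\ref{Prop:S-basic}(4), so $\bfH$ centralises $A$ to full precision and stabilises $\psi_{A_{\ell-\mup}}$ restricted to any subgroup. Second, $\bfJ^{\mdown} \subseteq \mcS^0_{A, \ell-\mup}$ by Proposition~\ref{Prop:S-basic}(3) (using $\ell - \mup - m = \mdown$), so $\bfJ^{\mdown}$ centralises $A_{\ell-\mup}$ in $\mfg(\lri_\ell)$ and fixes the character. Third, for $g_0 = \exp(X) \in \bfG^{\ldown}$ and $Y \in \mfj^{\mup}$, the estimate $[N, Y] \in \mfg^{m+\mup}$ combined with the arithmetic inequality $m + \mup = \lceil(\ell+m)/2\rceil \ge \lceil \ell/2\rceil = \lup$ (valid for $m \ge 1$) yields $\tr(X\,[N, Y]) \in \pi^{\ldown + \lup} = \pi^{\ell}$, as needed.

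For the inclusion $\subseteq$ I will factor a stabilising element as $g = hjg_0 \in \iga$ with $h \in \bfH$, $j \in \bfJ^{\ldown-m}$, $g_0 \in \bfG^{\ldown}$. Since $h$ and $g_0$ stabilise by the previous step, so does $j$. Writing $j = \exp(J)$ with $J \in \mfj^{\ldown-m}$, the condition becomes $\tr([N, J]\,Y) \equiv 0 \pmod{\pi^\ell}$ for every $Y \in \mfj^{\mup}$, equivalently the $\mfj$-component of $[N, J]$ lies in $\mfj^{\ell-\mup} = \mfj^{\mdown + m}$. Using the five-coordinate parametrisation $(a, b, x, y, z)$ of $\mfj_\delta$ from Section~\ref{sec:structure-of-stabilisers} and computing the bracket $[N, J]$ entry-by-entry for $N = \alpha E_{21} + \beta E_{32} + \gamma E_{31}$, I will verify that the induced bilinear pairing on the quotients $\mfj^{\ldown-m}/\mfj^{\mdown} \times \mfj^{\mup}/\mfj^{\lup}$ is non-degenerate, forcing $J \in \mfj^{\mdown} + \mfg^{\ldown}$ and hence $j \in \bfJ^{\mdown}\,\bfG^{\ldown}$.

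The hardest part will be the non-degeneracy verification in the converse direction. It splits into three cases according to whether $\val(\alpha)$, $\val(\beta)$, or $\val(\gamma)$ attains the minimum value $m$, and in each case one must exhibit an explicit element of $\mfj^{\mup}$ witnessing non-degeneracy of the pairing; the unitary setting introduces the additional Hermitian constraints on $\alpha, \beta, \gamma, \delta$ that must be preserved throughout. A secondary technical difficulty is the careful control of higher-order Baker-Campbell-Hausdorff corrections to $\Ad(g) Y - Y$, whose contributions to the trace pairing with $A_{\ell-\mup}$ must be shown to lie in $\pi^\ell$ by exploiting the structure of $\mfj_\delta$ and the explicit form of $A$, with the hypothesis $m < \lup$ ensuring that the relevant arithmetic inequalities among $\ldown, \lup, \mdown, \mup$, and $m$ go through.
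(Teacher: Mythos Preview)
Your plan for the inclusion $\supseteq$ is sound and essentially matches the paper: each of the three factors is handled by the same mechanism (Proposition~\ref{Prop:S-basic}(4) for $\bfH$, the identity $\mdown+\mup=\ell-m$ for $\bfJ^{\mdown}$, and a valuation estimate for $\bfG^{\ldown}$).

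The reverse inclusion, however, has a genuine gap. Your key claim is that the bilinear pairing on $\mfj^{\ldown-m}/\mfj^{\mdown}\times\mfj^{\mup}/\mfj^{\lup}$ induced by $(J,Y)\mapsto\tr(A[J,Y])$ is non-degenerate, forcing $J\in\mfj^{\mdown}+\mfg^{\ldown}$. This is false. A direct computation of $\tr([N,J]Y)$ shows that on the $Y$ side only the three combinations $(b'-a'-\delta z',\,x',\,y')$ appear, and on the $J$ side the pairing vanishes precisely when
\[
\alpha x-\beta y\equiv 0,\qquad \alpha(a-b+\delta z)-\gamma y\equiv 0,\qquad \gamma x-\beta(a-b+\delta z)\equiv 0,
\]
i.e.\ exactly the relations \eqref{condition-on-centralizer01}--\eqref{condition-on-centralizer03} that cut out $\mfh$ inside $\mfj$. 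Thus the radical on the $J$ side is the (projection of the) $\mfh$-part, which is \emph{not} contained in $\mfj^{\mdown}$ when $\ldown-m<\mdown$. The correct conclusion is $J\in\mfh^{\ldown-m}+\mfj^{\mdown}+\mfg^{\ldown}$, and this is precisely what the paper establishes: it tests against three explicit elements $Y_1,Y_2,Y_3\in\mfj$ to obtain the three relations above (with quadratic correction terms $\pi^m Q_i$), and then runs a reverse induction on the level $i\in\{\ldown-m,\ldots,\mdown\}$, at each step peeling off an explicit element of $\mfh$ and pushing the remainder one level deeper.

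A second issue is that you defer the Baker--Campbell--Hausdorff corrections as a ``secondary technical difficulty.'' They are not negligible: the relations one obtains contain genuine quadratic terms (the $Q_i$ in the paper's \eqref{eqn.stab.rel}), and it is the inductive decomposition $X=X'+(X-X')$ with $X'\in\mfh$ that absorbs them. A pure bilinear-form argument on a fixed quotient cannot see this; you would need a filtered version that handles the non-linear terms level by level, which is exactly what the paper does. Finally, your simplification $[A,Y]=[N,Y]$ for $Y\in\mfj$ is only literally correct in the $\GL_3$ case; for $\GU_3$ one has $[rE+\Delta,Y]=(r-1)[E,Y]\in\delta\cdot\mfg$, which is small but must be tracked.
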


\begin{proof}  An element $g  \in \iga$ stabilises $\psi_{A_{\ell-\mup}}$ if and only if
  \begin{equation}\label{eqn:stab-psi-B}
    \tr \left(A \cdot \left(g Y g^{-1} -Y\right)\right) =0, \quad \forall \,  Y \in \log \left(\bfJ(\lri_\ell)^{\mup} \bfG(\lri_\ell)^{\lup}\right)
  \end{equation}

  We first check that each of the subgroups on the RHS is contained in the LHS. Indeed,
  \begin{itemize}

    \item By Part (4) of Prop \ref{Prop:S-basic}, $\bfH(\lri_\ell) \subset \Stab_{\iga}(\psi_{A_{\ell-\mup}})$.

    \item To show that $\bfG(\lri_\ell)^{\ldown} \subset \iga $,
          \begin{eqnarray}
            (\psi_{A_{\ell-\mup}})^{\exp\left(\pi^{\ldown }X\right)} (\exp(\pi^\upu Y))  = &  \psi_{A_{\ell-\mup}}((\exp(\pi^\upu Y)) \psi_{A_{\ell-\mup}}(\exp(\pi^{\ldown + \upu } (XY-YX))). \nonumber
          \end{eqnarray}
          We use $\ldown + \upu \geq \lup$ and $\psi_{A_{\ell-\mup}}(\exp\pi^{\ldown + \upu } (XY-YX)) = \psi_A(\exp\pi^{\ldown + \upu } (XY-YX))  = 1$ to deduce that $\bfG(\lri_\ell)^{\ldown}\subseteq \mathrm{Stab}_{\iga}(\psi_{A_{\ell-\mup}})$.

    \item Finally, recalling that $\iga=\bfH(\lri_\ell) \bfJ(\lri_\ell)^{\ldown-m} \bfG(\lri_\ell)^{\ldown}$ it now remains to show that $\bfJ(\lri_\ell)^{\mdown}$ is contained in the LHS. Since~$\bfJ(\lri_\ell)^{\mdown}$ stabilises the restriction of the character $\psi_{A_{\ell- \mup}}$ to~$\bfG(\lri_\ell)^{\lup}$ it is enough to check that it stabilises its restriction to~$\bfJ(\lri_\ell)^{\mup}$. We therefore look at \eqref{eqn:stab-psi-B} with $g=\exp(\pi^{\mdown}X) \in \bfJ(\lri_\ell)^{\mdown}$ and $\pi^{\mup}Y \in \log \left( \bfJ(\lri_\ell)^{\mup} \right)$. Now, using the fact that $\mup+\mdown=\ell-m$, we have
          \[
            \tr \left(\pi^{\ell-m}A [X,Y] \right) = \tr \left(\pi^{\ell-m}X [A,Y] \right) = 0.
          \]
          This justifies that the LHS of (\ref{eqn:stab-psi-B}) is contained in the RHS.
    \item For the reverse inclusion, it is enough to
          prove that $$\bfJ(\cO_\ell)^{\ldown-m} \cap \Stab_{\iga}(\psi_{A_{\ell-\mup}}) \subseteq \bfH(\cO_\ell)^{\ldown-m} \bfJ(\cO_\ell)^{\mdown}\bfG(\cO_\ell)^{\ldown}.$$

          Consider $g = \exp(\pi^{\ldown-m}X) \in \bfJ(\cO_\ell)^{\ldown-m}$. Then $g$ stabilises $\psi(A_{\ell - \mup})$ if and only if
          \[
            \tr \left(\pi^{\mup}A \cdot \left(g Y g^{-1} -Y\right)\right) = 0
          \]
          for every $\pi^{\mup}Y \in \log \left( \bfJ(\lri_\ell)^{\mup} \right)$. We will consider various possibilities of $Y$ to obtain that $g \in \bfH(\cO_\ell)^{\ldown-m} \bfJ(\cO_\ell)^{\mdown}\bfG(\cO_\ell)^{\ldown}$.

          Consider  $Y_1 = \begin{bmatrix} 0&  1&  0 \\ 0 &  0&  0 \\ 0 &  \delta &  0 \end{bmatrix} $,  $Y_2 = \begin{bmatrix} 0&  0&  0 \\ \delta &  0&  1 \\ 0&  0 &  0 \end{bmatrix} $, and $Y_3 = \begin{bmatrix} 0&  0&  0 \\ 0&  1&  0 \\ 0 &  0 &  0 \end{bmatrix}. $ 
          We note that either $\pi^{\mup}Y_i$'s or their linear combinations give three linearly independent vectors of $\log \left(\bfJ(\cO_\ell)^{\mup}\right)$.We use Maple to compute $\tr \left(\pi^{\mup}A \cdot \left(g Y g^{-1} -Y\right)\right) $ for $g = \exp(\pi^{\ldown-m}X)$ with $X = \begin{bmatrix} a&  x&  z \\ \delta y&  b&  y \\ \delta^2 z &  \delta x &  a \end{bmatrix} $ and above choices of $Y$ to obtain condition on $g$ such that $g$ stabilises $\psi(A_{\ell - \mup})$. This leads to four relations, of which one is redundant, giving rise to the following three relations:
          \begin{equation}\label{eqn.stab.rel}
            \begin{split}
              &[ \gamma y- \alpha(a-b+\delta z) ] +\pi^mQ_1(x,y,z,a-1,b-1)\equiv_{\ell - \mup} 0 \\
              &[\gamma x- \beta(a-b+\delta z)] + \pi^mQ_2(x,y,z,a-1,b-1) \equiv_{\ell - \mup} 0 \\
              &[\alpha x - \beta  y] +\pi^mQ_3(x,y,z,a-1,b-1)\equiv_{\ell - \mup} 0,
            \end{split}
          \end{equation}

          where the $Q_1, Q_2$ and $Q_3$ are quadratic polynomials. We claim that $X=U+V+W$ with  $U \in \bfH(\lri_\ell)$, $V \in \bfJ(\lri_\ell)^{\mdown}$ and $W \in \bfG(\lri_\ell)^{\ldown}$. The proof is case by case, depending on the precise form of $\bfH$, using a reverse induction on $i \in \{\mdown,\ldots,\ldown-m\}$ with $X \in  \mfj(\lri_\ell)^i$. We give explicit proof for $\bfH_0$, the other two cases are similar. Indeed, given $X \in \mfj(\lri_\ell)^i$ as above we define
          \[
            X' = \begin{bmatrix} a&  x_1&  z \\ \alpha z +\delta y&  b_1&  y \\ \delta^2 z +\alpha x_1 +\beta y& \beta z + \delta x_1 &  a \end{bmatrix}  \in \bfH_0(\lri_\ell)
          \]
          with $x_1 := \alpha^{-1} \beta y$ and $b_1:= a + \delta z - \alpha^{-1} \gamma y$. Then
          using \eqref{eqn.stab.rel} observe that
          \[
            X-X'= \underbrace{\begin{bmatrix} 0 &  \alpha^{-1}(\alpha x-\beta y)&  0 \\  0 & b-a-\delta z +\alpha^{-1}\gamma y &  0 \\  0 & \delta \alpha^{-1} (\alpha x -\beta y) &  0 \end{bmatrix}}_{\in \mfj(\lri_\ell)^{i+1}} - \underbrace{\begin{bmatrix} 0 &  0 &  0 \\ \alpha z & 0 &  0 \\ 2\beta y& \beta z  &  0 \end{bmatrix}}_{\in \mfg(\lri_\ell)^{\ldown}}.
            \qedhere \]
  \end{itemize}

\end{proof}

Let $\Gamma$ denote the quotient
$\bfJ(\lri_\ell)^{\mdown}\bfG(\lri_\ell)^{\ldown} /\bfJ(\lri_\ell)^{\mup}\bfG(\lri_\ell)^{\lup} \cong \rf^{5(\mup-\mdown)+4(\lup-\ldown)}$,
and define a bilinear form $\Gamma  \times \Gamma \longrightarrow \C^\times$
\[
  \mcB_A\left(\exp(\pi^{\mdown}X),\exp(\pi^{\mdown}Y)\right):=\psi_{A}(\pi^{2\mdown}[X,Y]).
\]
The bilinear form $\mcB_A$ is well-defined because of the following:
\begin{gather*}
  [\exp(\pi^{\mdown}X),\exp(\pi^{\mdown}Y)] = \exp(\pi^{2\mdown}[X,Y] + \frac{1}{2}\pi^{3\mdown}[X, [X,Y]] + \frac{1}{3!} \pi^{4\mdown}[X,[X,[X,Y]]]),\\
  \psi_A([\exp(\pi^{\mdown}X),\exp(\pi^{\mdown}Y)] ) = \psi_A(\pi^{2\mdown}[X,Y] + \frac{1}{2}\pi^{3\mdown}[X, [X,Y]] + \frac{1}{3!}\pi^{4\mdown}[X,[X,[X,Y]]]),\\
  \psi_A(\frac{1}{2}\pi^{3\mdown}[X, [X,Y]]) = 1 \text{ and }
  \psi_A( \frac{1}{3!}\pi^{4\mdown}[X,[X,[X,Y]]]) = 1. 
\end{gather*}

\begin{proposition} The radical of $\mcB_A$ in $\Gamma$ is isomorphic to $\rf^{3(m_2-m_1)}$. Its inverse image in $\bfG(\lri_\ell)$ is given by
  \[
    \widetilde{R_A}=
    \bfH(\lri_\ell)^{\mdown}\bfJ(\lri_\ell)^{\mup}\bfG(\lri_\ell)^{\lup}.
  \]
\end{proposition}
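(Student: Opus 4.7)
The plan is to identify the preimage of $R_A$ with the elements of $\iga^{\mdown}$ that act trivially on the character $\psi_{A_{\ell-\mup}}$ of $N := \bfJ(\lri_\ell)^{\mup}\bfG(\lri_\ell)^{\lup}$ by conjugation, and then to verify the explicit description using Proposition~\ref{prop:Stab-psi-B} together with a trace-form computation. Since $\Gamma = \iga^{\mdown}/N$ is abelian, the commutator map factors as $\Gamma \times \Gamma \to N$; composing with $\psi_{A_{\ell-\mup}}$ recovers $\mcB_A$ (the Baker-Campbell-Hausdorff corrections being absorbed by $\psi$ as in the proof of Proposition~\ref{prop:Stab-psi-B-m1}), so $\bar g$ lies in the radical if and only if $\psi_{A_{\ell-\mup}}([g, h]) = 1$ for every $h \in \iga^{\mdown}$.

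First I would establish the inclusion $\bfH(\lri_\ell)^{\mdown}\bfJ(\lri_\ell)^{\mup}\bfG(\lri_\ell)^{\lup} \subseteq \widetilde{R_A}$. Since $N$ is the kernel of $\iga^{\mdown} \to \Gamma$, its image lies trivially in $R_A$. By Proposition~\ref{Prop:S-basic}(4), $\bfH(\lri_\ell) \subseteq \Stab_{\iga}(\psi_{A_{\ell-\mup}})$, so for any $h \in \bfH(\lri_\ell)^{\mdown}$ and $g \in \iga^{\mdown}$ the element $[h, g]$ lies in $N$ and $\mcB_A(\bar h, \bar g) = \psi_{A_{\ell-\mup}}([h, g]) = 1$.

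For the reverse inclusion I would fix a linear complement $\mfc \subset \mfg$ to $\mfj$, yielding a decomposition $\Gamma \cong (\pi^{\mdown}\mfj/\pi^{\mup}\mfj) \oplus (\pi^{\ldown}\mfc/\pi^{\lup}\mfc)$, and use $\mcB_A(\exp X, \exp Y) = \psi(\tr([A, X] Y))$. For $X \in \mfj$, since $\mfj$ centralises $E + \Delta$, one obtains $[A, X] = \pi^m [L, X]$ where $L$ is the strictly lower-triangular matrix with entries $\alpha/\pi^m, \beta/\pi^m, \gamma/\pi^m$; the radical condition reads $\tr([L, X]Y_{\mfj}) \equiv 0 \pmod{\pi^{\mup-\mdown}}$ for all $Y_{\mfj} \in \mfj$. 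This is vacuous when $\mup = \mdown$ and, in the case $\mup = \mdown + 1$, a direct expansion recovers exactly the three relations \eqref{condition-on-centralizer01}--\eqref{condition-on-centralizer03} modulo $\pi$, cutting out $\mfh(\kk)$ and yielding a contribution of dimension $3(\mup - \mdown)$. For $X \in \mfc$, the leading part of $[A, X]$ lies in $[E + \Delta, \mfc] = \mfj^\perp$; pairing against $Y_{\mfc} \in \mfc$ (nontrivial only when $\lup = \ldown + 1$) forces $[A, X] \equiv 0 \pmod \pi$, because $\mfj^\perp \cap \mfc^\perp = 0$ by non-degeneracy of the trace form, and hence $X$ vanishes in the quotient.

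Finally, to identify $\widetilde{R_A}$ explicitly I would observe that $\bfH(\lri_\ell) \cap N = \bfH(\lri_\ell)^{\mup}$: both $\bfJ(\lri_\ell)^{\mup}$ and $\bfG(\lri_\ell)^{\lup}$ reduce to the identity in $\bfG(\lri_{\mup})$, so any $h \in \bfH(\lri_\ell) \cap N$ lies in $\bfH(\lri_\ell) \cap \bfG(\lri_\ell)^{\mup} = \bfH(\lri_\ell)^{\mup}$. Consequently, $\widetilde{R_A}/N \cong \bfH(\lri_\ell)^{\mdown}/\bfH(\lri_\ell)^{\mup}$, matching the computed dimension $3(\mup - \mdown)$ of $R_A$. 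The main technical obstacle is carrying out the trace-form computation on the $\mfj$-summand uniformly across the three cases $\bfH_0, \bfH_\infty, \bfH_\star$, corresponding to whether $\alpha, \beta,$ or $\gamma$ attains the minimum valuation $m$; the normal form for $A$ from Proposition~\ref{fibre.over.e} is designed precisely to make this case analysis tractable.
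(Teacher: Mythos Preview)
Your approach is essentially the same as the paper's. Both arguments split $\Gamma$ into a $\mfj$-piece (at level $\mdown$) and a complementary piece (at level $\ldown$) and compute the radical on each: the paper phrases this as determining $\bfG(\lri_\ell)^{\ldown} \cap \wt{R_A}$ and $\bfJ(\lri_\ell)^{\mdown} \cap \wt{R_A}$ separately, while you work with an explicit linear complement $\mfc$. On the $\mfj$-piece your trace computation recovering relations \eqref{condition-on-centralizer01}--\eqref{condition-on-centralizer03} modulo $\pi$ is exactly what the paper calls ``an even simpler version of Equations~\eqref{eqn.stab.rel} without the quadratic terms''; on the complement your observation that $\mfj^\perp \cap \mfc^\perp = 0$ is the non-degeneracy underlying the paper's one-line claim that $\tr(\pi^{\ell-1}A[X,Y])=0$ for all $\bar X \in \mfg(\kk)$ forces $\bar Y \in \mfj(\kk)$.

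One point worth making explicit in your write-up (and arguably implicit in the paper's version as well) is that the form $\mcB_A$ is block-diagonal for the splitting $\Gamma \cong (\pi^{\mdown}\mfj/\pi^{\mup}\mfj) \oplus (\pi^{\ldown}\mfc/\pi^{\lup}\mfc)$, so that the two radical computations may be combined. The cross-pairing of $X \in \pi^{\mdown}\mfj$ against $Y \in \pi^{\ldown}\mfc$ is $\psi(\pi^{\mdown + \ldown + m}\tr([L,X]Y))$, and one checks $\mdown + m + \ldown \geq \ell$ since $\mdown + m = \ell - \mup$ and $\ldown \geq \mup$ for $m \geq 1$. Without this observation, knowing the radical restricted to each summand does not by itself determine $\wt{R_A}$.
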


\begin{proof} We first note that $\bfG(\lri_\ell)^{\ldown} \cap \wt{R_A} = \bfJ(\lri_\ell)^{\ldown}\bfG(\lri_\ell)^{\lup}$. Indeed, as we only need to consider the case $\ldown < \lup$, and noting that in such case $2\ldown=\ell-1$, we get that $\tr(\pi^{\ell-1}A[X,Y])=0$ for all $\ol{X} \in \mfg(\rf)$ implies that $\ol{Y} \in \mfj(\rf)$.
  Second, we need to show that $\bfJ(\lri_\ell)^{\mdown} \cap \wt{R_A} = \bfH(\lri_\ell)^{\mdown}\bfJ(\lri_\ell)^{\mup}$. A similar computation to the one given in the proof of Proposition~\ref{prop:Stab-psi-B} gives an even simpler version of Equations \eqref{eqn.stab.rel} without the quadratic terms hence the assertion follows. 
\end{proof}

\begin{proof}[Proof of Theorem~\ref{thm:construction}(a)] The construction from this point onwards is along the same lines as the construction in \cite{KOS}. The character $\psi_{A_{\ell - \mup}}$ extends to $\widetilde{R_A}$ say $\psi_{A_{\ell - \mdown}}$. We choose a maximal isotropic subspace in $\Gamma$ with respect to $\mcB_A$ which is stable under the Sylow-$p$ subgroup of $\bfH(\lri_\ell)$ (recalling that the latter is abelian). We then extend the character $\psi_{A_{\ell-\mdown}}$ to the inverse image in $\bfG(\lri_\ell)$ of that maximal isotropic subspace using $\psi_A$ (see \cite[Section~3.4]{KOS} for more details) such that the extension is $\bfH(\lri_\ell)$ stable. Therefore $\bfH(\lri_\ell)$ stabilises the unique irreducible $\sigma$ of $\bfJ(\lri_\ell)^{\mdown}\bfG(\lri_\ell)^{\ldown}$ lying above that extension. Since $\bfH(\lri_\ell)$ is abelian, we get an irreducible representation of $\iga$ lying above $\psi_{A_{\ldown}}$ which induces irreducibly to $\bfG(\lri_\ell)$.
\end{proof}
\subsection{Proof of Theorem~\ref{thm:construction}(b)}
\label{subsec:case-m-greater-ldown}
This section aims to prove Theorem~\ref{thm:construction}(b). Note that, for this case,  $m \geq  \lup$ and $\ell - \mup \geq \lup$. Therefore by Proposition~\ref{prop:Stab-psi-B-m1},  \[
    \psi_{A_{\lup}}(g) = \psi(\tr (A_{\lup} \log(g))),
  \]
  for all $g \in \bfJ(\lri_\ell)^\ldown \bfG(\lri_\ell)^{\lup}$ is a well-defined character of $\bfJ(\lri_\ell)^\ldown \bfG(\lri_\ell)^{\lup}$ that extends $\psi_{A_\ldown}.$ We focus to study $\Irr(\iga \mid \psi_{A_{\lup}})$. As earlier, this and Clifford theory  describe $\Irr(\iga \mid \psi_{A_{\ldown}}).$ Since $m \geq \lup$, we have 
 
\[
  A_{\lup} = rE + \omega \mathrm{I} +   \left[\begin{matrix} 0 &  &  \\    &\delta_1 &  \\ \delta_1^2  &   & 0 \end{matrix}\right]
\] 
such that $\omega \equiv_{\ldown} 0 $ and $\delta_1 \equiv_{\ldown} \delta.$
The following describes the stabiliser of $\psi_{A_{\lup}}$ in $\iga$. 
\begin{proposition}
  \label{prop:Stab-psi-B1}
  The stabiliser $\Stab_{\iga}(\psi_{A_{\lup}})$ of $\psi_{A_{\lup}}$ satisfies
  \[
    \Stab_{\iga}(\psi_{A_{\lup}}) = \bfJ(\lri_\ell) \bfG(\lri_\ell)^{\ldown}.
  \]
\end{proposition}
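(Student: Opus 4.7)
The inclusion $\Stab_{\iga}(\psi_{A_{\lup}}) \subseteq \iga$ is automatic, and by Proposition~\ref{prop:psiA-stabiliser}(a) applied with $m \geq \lup \geq \ldown$ we have $\iga = \bfJ(\lri_\ell)\bfG(\lri_\ell)^{\ldown}$. It therefore suffices to prove the reverse inclusion: that both $\bfG(\lri_\ell)^{\ldown}$ and $\bfJ(\lri_\ell)$ stabilise $\psi_{A_{\lup}}$. Writing a generic element of the domain $\bfJ(\lri_\ell)^{\ldown}\bfG(\lri_\ell)^{\lup}$ as $h = \exp Y$ with $Y \in \mfj^{\ldown} + \mfg^{\lup}$, a Baker--Campbell--Hausdorff calculation gives
\[
\psi_{A_{\lup}}(ghg^{-1})\,\psi_{A_{\lup}}(h)^{-1} = \psi\!\left(\tr\bigl((\Ad(g^{-1})A_{\lup} - A_{\lup})\,Y\bigr)\right),
\]
so $g$ stabilises $\psi_{A_{\lup}}$ exactly when $\Ad(g^{-1})A_{\lup} - A_{\lup}$ annihilates $\mfj^{\ldown} + \mfg^{\lup}$ under the trace pairing modulo $\pi^{\ell}$. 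Equivalently, this residue must lie in $(\mfj^{\perp_0} \cap \pi^{\ldown}\mfg) + \pi^{\lup}\mfg$, where $\mfj^{\perp_0}$ denotes the trace orthogonal of $\mfj$ in $\mfg$ taken over $\lri$.

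For $g = \exp X \in \bfG(\lri_\ell)^{\ldown}$ the verification reduces to $\tr(A_{\lup}[X, Y]) \equiv 0 \pmod{\pi^{\ell}}$ for $Y = Y_1 + Y_2 \in \mfj^{\ldown} + \mfg^{\lup}$. The contribution $[X, Y_2] \in [\mfg^{\ldown},\mfg^{\lup}] \subseteq \mfg^{\ell}$ vanishes, and $[X, Y_1]$ lies in $\mfg^{2\ldown}$, which is automatically zero when $\ell$ is even. When $\ell$ is odd one reduces modulo $\pi$: $A_{\lup} \equiv rE \pmod{\pi}$ since $\omega, \delta \in \pi\lri$, and $\mfj$ reduces mod $\pi$ into $\mfh(\kk) = Z_{\gl_3(\kk)}(E)$, so $[E, Y_1'] \equiv 0 \pmod{\pi}$. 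The identity $\tr(E[X', Y_1']) = \tr([E, Y_1']X')$ combined with the orthogonal decomposition $\gl_3(\kk) = \mfh(\kk)^{\Tr} \oplus [E,\gl_3(\kk)]$ from the lemma preceding Proposition~\ref{fibre.over.e} delivers the required mod-$\pi$ vanishing.

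For $g \in \bfJ(\lri_\ell)$ the essential observation is that $\wt{\bfJ}_\delta = Z_{\Mat_3}(E + \Delta)$, placing $E + \Delta$ in the centre of $\wt{\bfJ}_\delta$. In the $\GL_3$ case ($r = 1$), this yields $A_{\lup} = (E + \Delta) + \omega I \in Z(\wt{\bfJ}_\delta)$, hence $\Ad(g) A_{\lup} = A_{\lup}$ and stabilisation is immediate. In the $\GU_3$ case ($r = \nonsq$) the useful decomposition is
\[
A_{\lup} = \nonsq(E + \Delta) + (1-\nonsq)\Delta + \omega I,
\]
whose first and third summands centralise $\wt{\bfJ}_\delta$ while the middle one does not; consequently $\Ad(g^{-1})A_{\lup} - A_{\lup} = (1-\nonsq)\,g^{-1}[\Delta, g]$. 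A direct computation shows that $[\Delta, g]$ is supported only at the entries $(1,1), (1,2), (2,3), (3,3)$ and is divisible by $\delta$, with the two corner entries divisible by $\delta^2$. The key verification is then that $(1-\nonsq)\,g^{-1}[\Delta, g]$ lies in $(\mfj^{\perp_0} \cap \pi^{\ldown}\mfg) + \pi^{\lup}\mfg$; one establishes this by carefully combining the $\wt{\bfJ}_\delta$-structure of $g^{-1}$ with the Hermitian symmetries forced on $g \in \wt{\bfJ}_\delta \cap \GU_3$ (specifically the relation $x = -y^\circ$ and the pure-imaginary conditions on the coordinates $a, b, z$).

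The main technical obstacle is precisely this trace-orthogonality verification in the unitary case. Since $1 - \nonsq$ is a unit in $\Lri$, the vanishing cannot be extracted from $\pi$-divisibility alone and must emerge from the interplay between the $\wt{\bfJ}_\delta$-structure inherited by $g^{-1}$, the sparse pattern of $[\Delta, g]$, and the anti-Hermitian symmetries cutting out $\mfj^{\perp_0}$. Once this verification is complete, both $\bfJ(\lri_\ell)$ and $\bfG(\lri_\ell)^{\ldown}$ lie in $\Stab_{\iga}(\psi_{A_{\lup}})$, completing the proof.
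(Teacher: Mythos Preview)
Your argument for $\bfG(\lri_\ell)^{\ldown} \subseteq \Stab_{\iga}(\psi_{A_{\lup}})$ is correct, and so is the $\GL_3$ case of the $\bfJ$-inclusion: there $A_{\lup} = (E+\Delta)+\omega I = \bfc+\omega I$ is genuinely central in $\wt{\bfJ}_\delta$, so $\Ad(g)A_{\lup}=A_{\lup}$ for every $g\in\bfJ$. The paper's own proof is a one-line reference to Proposition~\ref{prop:Stab-psi-B}, so there is no detailed argument to compare against; your attempt to supply details follows the same outline.

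The genuine gap is the $\GU_3$ case, which you explicitly leave unfinished. You isolate the residual term $(1-\nonsq)\,g^{-1}[\Delta,g]$ correctly, but the claim that it lies in the annihilator of $\mfj^{\ldown}+\mfg^{\lup}$ is never verified; the sentence ``one establishes this by carefully combining the $\wt{\bfJ}_\delta$-structure with the Hermitian symmetries'' is a description of what must be done, not a proof. A direct check shows, for instance, that the $(1,2)$-entry of $[g,E]$ equals $-\delta x$, so after multiplying by the unit $1-\nonsq$ the residual has valuation only $\val(\delta)\geq 1$, which is not enough on its own. One simplification you do not exploit: the pairing of $D=\Ad(g^{-1})A_{\lup}-A_{\lup}$ against $\mfg^{\lup}$ vanishes automatically for any $g\in\iga$, since that is exactly the defining condition of $\iga$; so only the orthogonality against $\mfj^{\ldown}$, i.e.\ $D\in\mfj^\perp\pmod{\pi^{\lup}}$, remains to be checked. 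That residual verification still requires a concrete argument using the anti-Hermitian constraints on $g\in\bfJ_U$ and the explicit description of $\mfj^\perp$ from Lemma~\ref{lem:g-decomposition-for-j}, and you have not supplied it. As written, the $\GU_3$ case is incomplete.
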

\begin{proof} The proof follows the same lines as that of Proposition~\ref{prop:Stab-psi-B} and we omit the details. 
\end{proof}
Our next claim is that $\psi_{A_{\lup}}|_{\bfJ(\lri_\ell)^{\lup}}$ extends to $\bfJ(\lri_\ell)$. To prove this, we first determine the commutator subgroup of $\bfJ(\cO_\ell)$. 

\begin{lemma}
\label{lem:comm-subgp-J}
The map $\bfJ(\lri_\ell)$ to $\bfG_1(\lri_\ell) \times \bfG_1(\lri_\ell)$, where $\bfG_1$ is either $\GL_1$ or $\GU_1$ depending on whether the group is linear or unitary, respectively, given by 
  \begin{equation}\label{twist.det}
    \Delta: \left[\begin{matrix} a & x & z \\ \delta y& b & y \\\delta^2z & \delta x& a \end{matrix}\right] \mapsto (a-\delta z, b(a+\delta z)-2\delta xy)
  \end{equation}
  is a surjective homomorphism. Its kernel
  \[
    \left\{ \left[\begin{matrix} 1+\delta z & x & z \\ \delta y& \frac{1+2\delta xy}{1+2\delta z} & y \\\delta^2 z & \delta  x& 1+ \delta z \end{matrix}\right] \mid x,y,z \in \lri_\ell \right\} \cap \bfJ(\lri_\ell),
  \]
  is the commutator subgroup $[\bfJ(\lri_\ell),\bfJ(\lri_\ell)]$. 
\end{lemma}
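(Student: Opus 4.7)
The plan is to treat $\Delta$ as a refinement of the determinant. A direct $3\times 3$ expansion of the determinant of a matrix of the form specified in the lemma yields the key factorisation
\[
\det M \;=\; (a-\delta z)\bigl(b(a+\delta z) - 2\delta xy\bigr),
\]
which identifies the two components of $\Delta$ as a factorisation of $\det$. To check that each component individually takes values in $\bfG_1(\lri_\ell)$: in the linear case, reducing $M\in\bfJ(\lri_\ell)\subseteq\GL_3(\lri_\ell)$ modulo $\pi$ (where $\delta$ vanishes) makes $\bar M$ upper-triangular with diagonal $(\bar a,\bar b,\bar a)$, so invertibility forces $\bar a,\bar b\in\kk^\times$, and hence both $a\pm\delta z$ and $b(a+\delta z)-2\delta xy$ are units; in the unitary case the $(1,1)$ and $(1,3)$ entries of $MM^\star=\mathrm{I}$ (together with $\delta=\delta^\circ$) give $(a-\delta z)(a-\delta z)^\circ=1$, whence the $\GU_1$-relation for the second component follows from $\det M\in\GU_1(\lri_\ell)$.

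Next I would verify homomorphicity by expanding $M_1 M_2\in\bfJ(\lri_\ell)$, recording its parameters $(A,B,X,Y,Z)$, and checking the two multiplicativity identities
\begin{align*}
A-\delta Z &= (a_1-\delta z_1)(a_2-\delta z_2),\\
B(A+\delta Z)-2\delta XY &= \bigl[b_1(a_1+\delta z_1)-2\delta x_1 y_1\bigr]\bigl[b_2(a_2+\delta z_2)-2\delta x_2 y_2\bigr].
\end{align*}
Surjectivity is then immediate from $\Delta(\diag(u,vu^{-1},u))=(u,v)$. Solving $a-\delta z=1$ and $b(a+\delta z)-2\delta xy=1$ (using that $1+2\delta z\in\lri_\ell^\times$ since $\delta z\in\pi\lri_\ell$) yields the stated parameterisation of $\ker\Delta$.

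The final step is to identify $\ker\Delta$ with $[\bfJ(\lri_\ell),\bfJ(\lri_\ell)]$. The inclusion $[\bfJ,\bfJ]\subseteq\ker\Delta$ is immediate since $\bfG_1(\lri_\ell)\times\bfG_1(\lri_\ell)$ is abelian. For the reverse I would produce explicit commutators filling the three free parameters $x,y,z$ of $\ker\Delta$. Writing $N_x:=E_{12}+\delta E_{32}$ and $N_y:=E_{23}+\delta E_{21}$ (both squaring to zero, with $N_xN_yN_x=\delta N_x$), the three commutators
\begin{align*}
[\diag(a,b,a),\,I+xN_x] &= I+(ab^{-1}-1)\,x\,N_x,\\
[\diag(a,b,a),\,I+yN_y] &= I+(ba^{-1}-1)\,y\,N_y,\\
[I+xN_x,\,I+yN_y] &= I+xy\bigl(E_{13}+\delta(E_{11}+E_{33})-2\delta E_{22}\bigr)+\text{higher order},
\end{align*}
with $a\not\equiv b\pmod\pi$ (possible since $|\kk|\ge 5$ as $p>3$), cover the $x$-, $y$- and $z$-directions at leading order; in the unitary case the unipotents $I+xN_x$ and $I+yN_y$ are replaced by their Hermitian-symmetrised analogues, but the three-step structure is identical.

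The main obstacle is upgrading these leading-order commutator formulas to an exact generation statement for all of $\ker\Delta$. My plan is a reverse induction along the congruence filtration $K_i:=\ker\Delta\cap\bfJ(\lri_\ell)^i$: the Lie-algebra linearisation of the commutator formulas above shows that the graded piece $K_i/K_{i+1}$ is spanned by brackets, so an arbitrary element of $\ker\Delta$ can be successively corrected by products of commutators until it lies in $K_\ell=\{1\}$. The nilpotency $N_x^2=N_y^2=0$ keeps all truncation errors polynomial and makes each induction step purely algebraic, bypassing any need for the exponential map.
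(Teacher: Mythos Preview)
Your approach is correct and follows essentially the same line as the paper's (very terse) proof: the paper simply asserts that $\Delta$ is a surjective homomorphism and that every matrix in the displayed kernel lies in $[\bfJ(\lri_\ell),\bfJ(\lri_\ell)]$, whereas you actually supply the verifications via the determinant factorisation $\det M=(a-\delta z)(b(a+\delta z)-2\delta xy)$ and the explicit commutator/filtration argument. Two minor slips do not affect your argument (in fact $N_xN_yN_x=2\delta N_x$, and $[N_x,N_y]$ also contains a $\delta^2 E_{31}$ term), and your hand-wave for the unitary case is justified since there the constraints couple $x$ with $-y^\circ$, so the same three commutator families---suitably Hermitian-symmetrised---still span each graded piece $K_i/K_{i+1}$.
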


\begin{proof} It is immediate to check that this is a surjective homomorphism. The kernel contains the commutator subgroup because the quotient is abelian. Conversely, the matrices 
  \[
    \left[\begin{matrix} 1+\delta z & x & z \\ \delta y& \frac{1+2\delta xy}{1+2\delta z} & y \\\delta^2 z & \delta x& 1+\delta z \end{matrix}\right]
  \]
 are in the commutator subgroup and generate the kernel.
\end{proof}

\begin{proposition}
\label{prop:comm.sbgp-extension}
It follows that

  \begin{enumerate}
    \item[(i)] All linear characters of $\bfJ(\lri_\ell)$ are given by post-composing $\Delta$ with a character of  $\bfG_1(\lri_\ell) \times \bfG_1(\lri_\ell)$; and that

    \item[(ii)]
      \[
        [\bfJ(\lri_\ell),\bfJ(\lri_\ell)] \cap \bfJ(\lri_\ell)^{\lup} = \left\{\left[\begin{matrix} 1+\delta \pi^{\lup}z  &\pi^{\lup} x  & \pi^{\lup} z \\ \delta \pi^{\lup} y & 1-2\delta \pi^{\lup} z &  \pi^{\lup} y\\ \delta^2 \pi^{\lup}z & \delta \pi^{\lup}x & 1+\pi^{\lup}z \end{matrix}\right] \mid x,y,z \in \lri_\ell \right\} \cap \bfJ(\lri_\ell).
      \] 
      \item[(iii)] The character $\psi_{A_{\lup}}|_{\bfJ(\lri_\ell)^{\lup}}$ extends to $\bfJ(\lri_\ell)$.
  \end{enumerate}
\end{proposition}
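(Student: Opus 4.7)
Part (i) is immediate from Lemma~\ref{lem:comm-subgp-J}: since $\Delta$ is a surjective homomorphism with kernel $[\bfJ(\lri_\ell),\bfJ(\lri_\ell)]$, it identifies $\bfJ(\lri_\ell)/[\bfJ,\bfJ]$ with $\bfG_1(\lri_\ell)\times\bfG_1(\lri_\ell)$. Every linear character of $\bfJ(\lri_\ell)$ factors through its abelianisation, so arises by pullback along $\Delta$ from a character of $\bfG_1(\lri_\ell)\times\bfG_1(\lri_\ell)$.

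For part (ii), I would substitute $(x,y,z)\mapsto(\pi^\lup x,\pi^\lup y,\pi^\lup z)$ into the parametrisation of $\ker\Delta$ given in the lemma. Every entry is already in the stated shape except the $(2,2)$-entry
\[
b=\frac{1+2\delta\pi^{2\lup}xy}{1+2\delta\pi^\lup z}.
\]
Since $2\lup\geq\ell$, the numerator reduces to $1$ modulo $\pi^\ell$, and the geometric expansion of the inverse denominator truncates to $1-2\delta\pi^\lup z$ modulo $\pi^{2\lup}\subseteq\pi^\ell$. Hence $b\equiv 1-2\delta\pi^\lup z\pmod{\pi^\ell}$, giving the displayed form.

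For part (iii) the key structural point is that $\bfJ(\lri_\ell)^\lup$ is central in $\bfJ(\lri_\ell)$: indeed $[\bfJ(\lri_\ell),\bfJ(\lri_\ell)^\lup]\subseteq\bfJ(\lri_\ell)^{2\lup}=\{1\}$ since $2\lup\geq\ell$. Consequently, $\chi:=\psi_{A_{\lup}}|_{\bfJ(\lri_\ell)^\lup}$ is a character of an abelian central subgroup, and such a character extends to $\bfJ(\lri_\ell)$ if and only if it vanishes on $\bfJ(\lri_\ell)^\lup\cap[\bfJ(\lri_\ell),\bfJ(\lri_\ell)]$: the quotient $\bfJ(\lri_\ell)^\lup/(\bfJ(\lri_\ell)^\lup\cap[\bfJ,\bfJ])$ embeds into the finite abelian group $\bfJ(\lri_\ell)^{\mathrm{ab}}$, on which any character of a subgroup extends. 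Part (ii) provides an explicit model of this intersection, so what remains is to verify the triviality of $\chi$ there.

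That verification is a single trace computation. Writing a typical element as $g=I+\pi^\lup M$ with $M$ the matrix shape from (ii), one has $\log g\equiv\pi^\lup M\pmod{\pi^\ell}$ (using $2\lup\geq\ell$), so $\chi(g)=\psi\bigl(\pi^\lup\tr(A_{\lup}M)\bigr)$. Multiplying out $A_{\lup}=rE+\omega\mathrm{I}+\delta_1 E_{2,2}+\delta_1^2 E_{3,1}$ against $M$ and taking the diagonal yields the clean identity
\[
\tr(A_{\lup}M)=z\bigl[(r-1)\delta^2+(\delta_1-\delta)^2\bigr].
\]
The second summand lies in $\pi^{2\ldown}$ because $\delta_1\equiv\delta\pmod{\pi^\ldown}$, and multiplication by $\pi^\lup$ puts it in $\pi^{\ell+\ldown}\subseteq\pi^\ell$. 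For $\bfG=\GL_3$ the factor $r-1$ vanishes and we are done. For $\bfG=\GU_3$ with $r=\nonsq$, the unitarity of $g$ forces the approximate Hermitian relation $M+M^\star\equiv 0\pmod{\pi^\ldown}$; the $(1,3)$-entry of this congruence reads $2z\equiv 0\pmod{\pi^\ldown}$, so (using $p>2$) $z\in\pi^\ldown\lri_\ell$. Hence $z(r-1)\delta^2\in\pi^\ldown\Lri_\ell$ and multiplication by $\pi^\lup$ lands in $\pi^\ell\Lri_\ell$, killing the first summand as well. The main obstacle is precisely this unitary case, where the required cancellation only becomes visible after the correct Hermitian constraint on $z$ is extracted from the ambient group condition on $g$.
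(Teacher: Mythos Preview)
Your arguments for (i) and (ii) are correct and are exactly what the paper has in mind (the paper simply says they follow from Lemma~\ref{lem:comm-subgp-J}). For (iii) your strategy also coincides with the paper's: reduce the extension question to showing that $\psi_{A_{\lup}}$ vanishes on $[\bfJ(\lri_\ell),\bfJ(\lri_\ell)]\cap\bfJ(\lri_\ell)^{\lup}$, and you additionally supply the trace computation that the paper leaves implicit.

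There is one genuine error, though it is harmless. The assertion that $\bfJ(\lri_\ell)^{\lup}$ is \emph{central} in $\bfJ(\lri_\ell)$ is false: for instance, conjugating $I+\pi^{\lup}E_{2,3}\in\bfJ(\lri_\ell)^{\lup}$ by $I+E_{1,2}\in\bfJ(\lri_\ell)$ alters the $(1,3)$-entry. The inclusion $[\bfJ(\lri_\ell),\bfJ(\lri_\ell)^{\lup}]\subseteq\bfJ(\lri_\ell)^{2\lup}$ you wrote down would require the first factor to lie in $\bfJ(\lri_\ell)^{\lup}$ as well. Fortunately centrality is irrelevant: your own justification via the embedding of $\bfJ(\lri_\ell)^{\lup}/\bigl(\bfJ(\lri_\ell)^{\lup}\cap[\bfJ,\bfJ]\bigr)$ into the abelianisation is valid for an arbitrary subgroup, so simply delete the centrality sentence.

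The unitary step needs a small repair. For $g=I+\pi^{\lup}M$ unitary one indeed gets $M+M^{\star}\equiv 0\pmod{\pi^{\ldown}}$, but the $(1,3)$-entry of $M+M^{\star}$ is $z+z^{\circ}$, not $2z$; your conclusion $z\in\pi^{\ldown}\lri_\ell$ therefore requires $z^{\circ}=z$. That is in fact available from Lemma~\ref{lem:comm-subgp-J}, whose parametrisation of the commutator subgroup has $x,y,z\in\lri_\ell$; once you invoke this, $z+z^{\circ}=2z$ and the rest of your argument goes through. As written, however, the claim is attributed to unitarity alone, which is not enough.
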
 

\begin{proof} Here (i) and (ii) follow from Lemma~\ref{lem:comm-subgp-J}, and (iii) follows from 
\[
\psi_{A_{\lup}} ([\bfJ(\lri_\ell),\bfJ(\lri_\ell)] \cap \bfJ(\lri_\ell)^{\lup}) = 1.    \qedhere
\]
\end{proof}
\begin{proof}[Proof of Theorem~\ref{thm:construction}(b)] For $\ell$ even, by virtue of Propositions~\ref{prop:Stab-psi-B} and ~\ref{prop:comm.sbgp-extension}, the character $\psi_{A_\lup}$ extends to $\iga$. Hence the result follows. 

Assume now that $\ell$ is odd.  
Let $\Gamma_1$ denote the quotient
$\bfG(\lri_\ell)^{\ldown} /(\bfJ(\lri_\ell)^\ldown \bfG(\lri_\ell)^{\lup}) \cong \rf^{4(\lup-\ldown)}$. Define a bilinear form $\mcB'_A: \Gamma_1  \times \Gamma_1 \longrightarrow \C^\times$ by 
\[
  \mcB'_A\left(\exp(\pi^{\ldown}X),\exp(\pi^{\ldown}Y)\right):=\psi_{A_{\lup}}(\pi^{2\ldown}[X,Y]).
\]
It is easily seen that $\mcB'_A$ is a well-defined non-degenerate bilinear form. The construction of $\Irr(\iga \mid \psi_{A_{\ldown}})$ from this point onwards follows along the same lines as the construction in~\cite{KOS}. We choose a maximal isotropic subspace in $\Gamma_1$ for $\mcB'_A$, which is stable under the (unique) {$p$-Sylow} subgroup of $\bfJ(\cO_\ell)$. We then extend the character $\psi_{A_{\lup}}$ to the inverse image in $\bfG(\lri_\ell)^{\ldown}$ of that maximal isotropic subspace using $\psi_A$ (see \cite[Section~3.4]{KOS} for more details). Since the $p$-Sylow subgroup of $\bfJ(\cO_\ell)$ stabilises the extension and therefore stabilises the unique irreducible representation $\sigma$ of $\bfG(\lri_\ell)^{\ldown}$ lying above $\psi_{A_{\lup}}$. The character $\psi_{A_\lup}$ extends to $\bfJ(\cO_\ell)$ by Proposition~\ref{prop:comm.sbgp-extension}. Therefore $\sigma$ extends to an irreducible representation $\tilde\sigma$ of $\iga$. By Clifford's theory, every irreducible representation of $\iga$ lying above $\psi_{A_\lup}$ is of the form $\tilde{\sigma} \otimes \chi,$ where $\chi$ is obtained from an irreducible representation of $\bfJ(\cO_\ell)$ by inflation. This proves Theorem~\ref{thm:construction}(b). 
\end{proof}

\subsection{The representation zeta function $\zeta_{\bfG(\lri_\ell) | E}(s)$}\label{eq:explicit.zeta.E} Using the  construction above we obtain the following explicit formula  for $\zeta_{\bfG(\lri_\ell) | E}(s)$.
\[
\begin{split}
\zeta_{\bfG(\lri_\ell) | E}(s) = &\sum_{m=1}^{\ldown} \sum_{A = A(\lup,m; \omega,\delta, \alpha, \beta, \gamma ) \in \mfg(\lri_{\lup})} [\bfG(\lri_\ell):\iga]^{-1} \\& \times \sum_{{A_{\ell - \mup}} | A_\ldown}  [\iga:\Stab_{\iga}(\psi_{A_{\ell - \mup}})]^{-1}q^{-2(\mup-\mdown)}q^{-(2(\lup - \ldown)+(\mup-\mdown))s}  \\
    & \times [\Stab_{\iga}(\psi_{A_{\ell - \mup}}): \bfG(\lri_\ell)^{\ldown}]  [\bfG(\lri_\ell):\Stab_{\iga}(\psi_{A_{\ell - \mup}})]^{-s} \\ &  + q^{-2(\lup - \ldown)s} [\bfG(\cO_\ldown) : \bfJ(\cO_\ldown)]^{-s} q^{2 (\lup -1)} \zeta_{\bfJ(\cO_\ldown)}(s), 
\end{split}
\]
where $A(\lup,m; \omega,\delta, \alpha, \beta, \gamma ) \in \mfg(\lri_{\lup})$ are of the form (\ref{form.of.char}) with $m \geq 1$, $\omega, \delta \in \pi \cO_\ell$, and  the last term on the right side corresponds to $m = \lup$.

\begin{remark}\label{rem:SL3.SU3} The construction in Theorem~\ref{thm:construction} and the formula for the zeta function $\zeta_{\bfG(\lri_\ell) | E}(s)$ are valid for $\bfG\in \{\SL_3,\SU_3\}$ {\em mutatis mutandis} by replacing the orbits representatives by the subset of traceless representatives and  the relevant subgroups by their intersection with $\SL_3$. 
    
\end{remark}

\section{Representations of the  groups $\bfJ(\lri_\ell)$ and proof of Theorem~\ref{thm:dim.reduction.J}}\label{sec:proof.E}

\subsection{A blueprint proof of Theorem~\ref{thm:dim.reduction.J}} 

The results in Section~\ref{sec:coad.orbits} prove most of the cases in Theorem~\ref{thm:dim.reduction.J}. Indeed, scalar orbits follow using twists by one-dimensional characters and induction on the level, regular orbits are covered in \cite{KOS}, and the decomposable orbits by Theorem~\ref{thm:dim.reduction.decomposable} and induction on the rank. 
To complete the proof we need to handle the nilpotent non-regular orbit and its twists. This is the content of the following theorem which also completes the missing case in the proof of Theorem~\ref{thm:local.zeta}.

\begin{theorem}\label{thm:zeta.e} Let $\bfG \in \{\GL_3,\GU_3\}$ and let $\ell \ge 2$. Then  
				\begin{equation}\label{eq:EEJJ}
				\zeta_{\bfG(\lri_\ell) | E}(s)= [\bfG(\cO_\ldown):\bfJ(\cO_{\ldown})]^{-s}q^{-2(\lup-
					\ldown)s}\zeta_{\bfJ(\lri_{\ell-1})}(s).
				\end{equation}  
The zeta function $\zeta_{\bfJ(\lri_{\ell})}(s)$ is given explicitly by 
   \begin{equation*}
\begin{split}
      	\zeta_{\JJ(\lri_{\ell})}(s) 
      &=q^{2\ell-2}\left[(q-\epsilon)^2+ (q-\epsilon)^2(q-1) \sum_{i=1}^{\ell}q^{i-1}q^{- is}  +(q^2-1)\sum_{i=0}^{\ell-1} q^{i}\left(q^{i}(q-\epsilon)\right)^{-s}\right].
	\end{split}			
    \end{equation*}
In particular, $\zeta_{\bfG(\lri_\ell) | E}(s)$ depends only on $\ell$, $q$ and $\epsilon$.				
			\end{theorem}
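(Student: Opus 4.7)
My plan is to prove the two parts of Theorem~\ref{thm:zeta.e} by first establishing part (ii) via a direct representation-theoretic analysis of $\bfJ(\lri_\ell)$, then using the resulting closed form to deduce the dimension-reduction identity~\eqref{eq:EEJJ} in part (i) by reorganising the explicit sum from Section~\ref{eq:explicit.zeta.E}.

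For part (ii), I would decompose $\bfJ(\lri_\ell) = T \ltimes N$, where $T \cong \bfG_1(\lri_\ell)^2$ is the diagonal Levi (of order $(q-\epsilon)^2 q^{2\ell-2}$) and $N$ is a unipotent radical of Heisenberg type over $\lri_\ell$: its centre $Z$ is free of rank one, the quotient $N/Z$ is free of rank two, and $T$ acts on $N/Z$ via the scaling character $c = ab^{-1}$ on the two weight spaces inversely. The three terms in the formula correspond to three families of characters of $\bfJ(\lri_\ell)$:
\begin{itemize}
\item The first term $q^{2\ell-2}(q-\epsilon)^2$ counts the one-dimensional characters, which are pulled back via $\Delta$ from $\bfG_1(\lri_\ell)^2$ by Lemma~\ref{lem:comm-subgp-J} and Proposition~\ref{prop:comm.sbgp-extension}.
\item The second term $q^{2\ell-2}(q-\epsilon)^2(q-1)\sum_i q^{i-1}q^{-is}$ counts the characters arising from $T$-fixed non-trivial characters of $N$: for each $i\in\{1,\dotsc,\ell\}$ there are $(q-1)q^{i-1}$ central characters of $Z$ of level exactly $i$, each giving rise via the Stone--von-Neumann principle to a unique $T$-fixed irreducible of $N$ of dimension $q^i$, and extension to $\bfJ$ is parametrised by the $(q-\epsilon)^2 q^{2\ell-2}$ characters of $T$.
\item The third term records the characters arising from $T$-orbits of size $q-\epsilon$ on $\widehat{N}$: inducing a Clifford-extended character from the stabiliser produces an irreducible of dimension $q^i(q-\epsilon)$, where $q^i$ is the underlying Heisenberg dimension and $q-\epsilon$ is the induction degree.
\end{itemize}
The counts follow by enumerating $T$-orbits and stabilisers level-by-level in the central filtration of $N$, with care taken for the Hermitian constraints that arise when $\bfG=\GU_3$. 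I would verify the total by checking that $\zeta_{\bfJ(\lri_\ell)}(-2) = |\bfJ(\lri_\ell)| = (q-\epsilon)^2 q^{5\ell-2}$, which reduces to the algebraic identity $(q-1)q^2 + (q^2-1) = q^3-1$ that underlies the formula's telescoping structure.

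For part (i), I would begin from the explicit expression for $\zeta_{\bfG(\lri_\ell)|E}(s)$ in Section~\ref{eq:explicit.zeta.E} and extract the common prefactor $[\bfG(\cO_\ldown):\bfJ(\cO_\ldown)]^{-s}q^{-2(\lup-\ldown)s}$. Using the stabiliser formulas of Propositions~\ref{prop:psiA-stabiliser}, \ref{prop:Stab-psi-B} and~\ref{prop:Stab-psi-B1} together with the dimension counts of Theorem~\ref{thm:construction}, the remaining sum is indexed by the data $(m,\omega,\delta,\alpha,\beta,\gamma)$ together with a Clifford extension $A_{\ell-\mup}$ of $A_\ldown$. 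I would set up a dimension-preserving bijection between this indexing set and the set of irreducibles of $\bfJ(\lri_{\ell-1})$ enumerated in part~(ii): the value $m$ records the central level, the image of $(\alpha,\beta,\gamma)$ records the $T$-orbit type (and in particular distinguishes the second family from the third), $\omega$ records a twist by a character of $T$, and the extension $A_{\ell-\mup}$ records the Heisenberg polarisation data. Conceptually this realises the assertion of Theorem~\ref{thm:dim.reduction.J} for the shadow-preserving lift $\wt{\xi}=E$. The terminal $m=\lup$ summand in Section~\ref{eq:explicit.zeta.E}, which carries an explicit factor of $\zeta_{\bfJ(\cO_\ldown)}(s)$, must merge with the $m<\lup$ contributions via this bijection to reconstitute $\zeta_{\bfJ(\cO_{\ell-1})}(s)$ at the adjacent level.

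The main obstacle will be the dimension-preserving matching in part~(i). In the unitary case the Hermitian constraint twists both the $T$-orbit structure on $\widehat{N}$ and the associated symplectic form on $N/Z$, so one must verify that the factor $q-\epsilon$ appears uniformly across orbit lengths, Levi orders and induction degrees in both the linear and unitary settings. A secondary difficulty arises for $\ell$ odd, where $\lup>\ldown$ and the construction of Theorem~\ref{thm:construction} splits into the qualitatively different cases $m<\lup$ and $m\geq\lup$; stitching these together so that the $m=\lup$ term correctly produces the ``$Z$-trivial'' contribution to $\zeta_{\bfJ(\cO_{\ell-1})}(s)$ requires bookkeeping of Clifford extensions at two consecutive congruence levels, and this is what ultimately forces the appearance of $\ell-1$ (rather than $\ldown$) in the reduction formula.
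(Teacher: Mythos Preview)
Your approach differs from the paper's in both parts, and while your strategy for part~(i) is broadly similar in spirit, your description of part~(ii) contains an oversimplification.

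For part~(ii), the paper does not use a semidirect product decomposition $\bfJ = T \ltimes N$. Instead it runs Clifford theory from the smallest congruence subgroup $\bfJ(\lri_\ell)^{\ell-1} \cong \mfj(\kk)$ (Construction~1 in \S\ref{sec:construction-1-J}), classifies the $\bfJ(\kk)$-coadjoint orbits on $\mfj(\kk)^\vee$ into three types via Proposition~\ref{prop.coadF}, and obtains the recursion
\[
\zeta_{\bfJ(\lri_\ell)}(s) = q^2 \zeta_{\bfJ(\lri_{\ell-1})}(s) + (\text{type $(ii)$ contribution}) + (\text{type $(iii)$ contribution}),
\]
which is then unwound. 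Your $T \ltimes N$ approach is workable for $\delta=0$ (the semidirect product decomposition is not available for general~$\delta$, though the zeta function turns out to be $\delta$-independent), but your third family is not described correctly: the $T$-orbits on $\widehat{N}$ do \emph{not} all have size $q-\epsilon$. Already for $\ell=2$ the orbit sizes are $q-\epsilon$ and $(q-\epsilon)q$, and an irreducible of $\bfJ$ of dimension $q^{i}(q-\epsilon)$ arises from several sources at different Heisenberg levels (for instance, both linear characters of $N$ with a unit coordinate and level-$1$ Heisenberg representations with nonzero radical data contribute to dimension $q(q-\epsilon)$). The totals do reassemble to the stated formula, but the bookkeeping is more intricate than your bullets indicate; the paper's recursive approach avoids this entirely.

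For part~(i), the paper's device is \emph{not} to match against the enumeration from part~(ii). Rather it gives a \emph{second}, independent construction of $\Irr(\bfJ(\lri_\ell))$ (Construction~2, \S\ref{sec:construction-2-J}) that is formally parallel to Theorem~\ref{thm:construction}: one expands $\zeta_{\bfJ(\lri_\ell)}(s)$ as a sum over characters $\psi_{W_\ldown}$ of the half-level congruence subgroup, with the same $m$-stratification and the same extension-and-induction schema as in Section~\ref{sec:construction-of-non-regular-characters-G}. Term-by-term comparison of the two parallel expressions then gives
\[
\zeta_{\bfG(\lri_\ell)|E}(s) = [\bfG(\lri_\ldown):\bfJ(\lri_\ldown)]^{-s} q^{-2(\lup-\ldown)s}\, \zeta_{\bfJ(\lri_\ell)\mid\Xi}(s),
\]
where $\Xi$ is the set of $W$ with $\alpha,\beta \in \pi\lri_\ell$ and $m\ge 1$; the identity $\zeta_{\bfJ(\lri_\ell)\mid\Xi}(s) = \zeta_{\bfJ(\lri_{\ell-1})}(s)$ then drops out because restricting to $\Xi$ is exactly reducing the level by one. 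This is cleaner than matching against your Heisenberg enumeration: the bijection is built into the parallel structure of the two constructions, so the stitching of the $m<\lup$ and $m\ge\lup$ regimes that you flag as a difficulty is handled uniformly, and the explicit formula from part~(ii) is never invoked in the proof of~\eqref{eq:EEJJ}.
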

			
\begin{remark}\label{rem:JSL3.JSU3} Following Remark~\ref{rem:SL3.SU3}, we note that~\eqref{eq:EEJJ} remains valid for $\bfG \in \{\SL_3,\SU_3\}$. The formula for $\zeta_{\bfJ'(\lri_\ell)}$ with $\bfJ'=\bfJ \cap \SL_3$ is given by 
   \begin{equation*}
\begin{split}
      	\zeta_{\JJ'(\lri_{\ell})}(s) 
      &=q^{2\ell-2}\left[(q-\epsilon)\left(1+ (q-1) \sum_{i=1}^{\ell}q^{i-1}q^{- is}\right)  +(q+\epsilon)\iota(q,\epsilon)^2\sum_{i=0}^{\ell-1} q^{i}\left(\frac{q^{i}(q-\epsilon)}{\iota(q,\epsilon)}\right)^{-s}\right],
	\end{split}			
    \end{equation*}
where $\iota(q,\epsilon)=\gcd(q-\epsilon,3)$ is the number of cubic roots of unity in $\rf$ or in $\Ker N^{\Rf}_{\rf}$, depending on $\epsilon$ being $+1$ or $-1$, respectively.
\end{remark}

Proving Theorem~\ref{thm:zeta.e} is the core of this section. In Section \ref{sec:construction-J} we give two constructions for the irreducible representations of $\bfJ(\lri_\ell)$. The first construction, given in Section~\ref{sec:construction-1-J}, results in an explicit formula for the associated representation zeta function which depends only on the cardinality of the residue field, hence  completing the proof of Theorem~\ref{thm:local.zeta}, and the second, given in Section~\ref{sec:construction-2-J}, proves \eqref{eq:EEJJ}, hence completing the proof of Theorem~\ref{thm:dim.reduction.J}.

\subsection{The groups and their Lie algebras} Consider the matrix
\[
  \bfc=\left[\begin{matrix}  &  & 1 \\ & \delta &  \\ \delta^2& &  \end{matrix}\right], \quad \delta \in \pi\lri_\ell.
\]

The centraliser of $\bfc$ in $\Mat_3(\Lri_\ell)$ is
\[
  \Cent_{\Mat_3(\Lri_\ell)}(\bfc)= \left\{ \left[\begin{matrix} a & x & z \\ \delta y& b & y \\\delta^2z & \delta x& a \end{matrix}\right] \mid a,b,x,y,z \in \lri_\ell \right\}.
\]

 Recall that these groups for $\GL_3$ and $\GU_3$ are defined as follows: 
\[
  \begin{split}
    \bfJ_{L, \delta}(\lri_\ell)&=\Cent_{\GL_3(\lri_\ell)}(\bfc)= \left\{ \left[\begin{matrix} a & x & z \\ 
 \delta y& b & y \\\delta^2z & \delta x& a \end{matrix}\right] \mid a,b \in \lri_\ell^\times, x,y,z \in \lri_\ell \right\}, ~\text{and}\\
    \bfJ_{U, \delta}( \lri_\ell)&=\Cent_{\GU_3(\lri_\ell)}(\rho\bfc)=\Cent_{\GU_3(\lri_\ell)}(\bfc)=\Cent_{\GL_3(\Lri_\ell)}(\bfc) \cap \GU_3(\lri_\ell).
  \end{split}
\]

The corresponding Lie rings are
\[
  \begin{split}
    \mfj_{L, \delta}(\lri_\ell)&=\left\{\left[\begin{matrix} a & x & z \\  \delta y & b & y \\ \delta^2z&  \delta x & a \end{matrix}\right] \mid a,b,x,y,z \in \lri_\ell\right\},~\text{and} \\
    \mfj_{U, \delta}(\lri_\ell)&=\left\{\left[\begin{matrix} a & x & z \\  \delta y & b & y \\ \delta^2z&  \delta x & a \end{matrix}\right] \mid  a,b,z \in \gu_1(\lri_\ell), x+y^\circ=0 \right\} = \mfjL(\Lri_\ell) \cap \gu_3(\lri_\ell). 
  \end{split}
\]
Throughout this section we fix $\delta \in \pi\cO_\ell$.  
When treating these groups and Lie rings simultaneously we write either $\bfJ_{\delta}$ or even $\bfJ$ by suppressing $\delta$   (for $\bfJ_{L, \delta}$ or $\bfJ_{U, \delta}$) and either $\mfj_\delta$ or $\mfj$ (for $\mfj_{L, \delta}$ or $\mfj_{U, \delta}$). The Pontryagin dual of these Lie rings is identified with $\mfg(\lri_\ell)/\jc(\lri_\ell)^\perp$. Moreover, we can choose a convenient section in both cases, namely $\jc(\lri_\ell)^\Tr$, such that the following holds.
\begin{lemma} 
\label{lem:g-decomposition-for-j}
$\mfg(\lri_\ell)=\jc(\lri_\ell)^\Tr \oplus \jc(\lri_\ell)^\perp$.
\end{lemma}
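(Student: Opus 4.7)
The plan is to deduce the decomposition from the non-degeneracy of the trace pairing
\[
B \colon \jc(\lri_\ell) \times \jc(\lri_\ell)^\Tr \longrightarrow \lri_\ell, \qquad B(J, T) = \tr(JT).
\]
Assuming that $B$ induces an isomorphism $\jc(\lri_\ell)^\Tr \xrightarrow{\sim} \jc(\lri_\ell)^\vee$ of $\lri_\ell$-modules, the direct-sum decomposition follows formally. First, any $T \in \jc^\Tr \cap \jc^\perp$ satisfies $\tr(JT) = 0$ for all $J \in \jc$ and thus vanishes, giving $\jc^\Tr \cap \jc^\perp = 0$. Second, for arbitrary $X \in \mfg(\lri_\ell)$ the linear functional $J \mapsto \tr(JX)$ on $\jc$ is realised as $J \mapsto \tr(JT)$ for a unique $T \in \jc^\Tr$, whence $X - T \in \jc^\perp$ and $X = T + (X-T) \in \jc^\Tr + \jc^\perp$.

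To establish that $B$ is a perfect pairing I would compute the Gram matrix of $B$ in the obvious five-parameter bases of $\jc(\lri_\ell)$ and $\jc(\lri_\ell)^\Tr$ indexed by the free entries $a, b, x, y, z$. A direct inspection shows that each ``cross'' pair of basis matrices has a product whose diagonal vanishes (the supports in the diagonal positions do not overlap), so the Gram matrix is diagonal with entries $2,\; 1,\; 1+\delta^2,\; 1+\delta^2,\; 1+\delta^4$. Since $\delta \in \pi\lri_\ell$, each $1+\delta^{2k}$ is congruent to $1$ modulo $\mfp$ and is therefore a unit in $\lri_\ell$; and $2$ is a unit because $p > 2$. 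Hence the Gram determinant is a unit, so $B$ is perfect.

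The unitary case reduces to the linear one: $\jc_U(\lri_\ell)$ and $\jc_U(\lri_\ell)^\Tr$ are the $\lri_\ell$-forms of $\jc_L(\Lri_\ell)$ and $\jc_L(\Lri_\ell)^\Tr$ respectively, cut out by the involution $A \mapsto -A^\star$, and the trace pairing descends to an $\lri_\ell$-valued pairing on these fixed-point modules. Choosing bases compatible with the involution, the resulting Gram matrix has the same shape and its determinant remains a unit for the same reasons. I do not anticipate a serious obstacle; the one subtlety worth flagging is that, working over the local ring $\lri_\ell$ rather than over a field, one cannot conclude surjectivity from a dimension count and must instead verify unit-determinant of the Gram matrix, which is what the computation above supplies.
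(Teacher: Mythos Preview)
Your proof is correct and takes a somewhat different route from the paper's. The paper first computes $\jc(\lri_\ell)^\perp$ explicitly as a four-parameter family of matrices, and then checks directly (by equating matrix entries) that $\jc(\lri_\ell)^\Tr \cap \jc(\lri_\ell)^\perp = 0$; the spanning is then immediate from the explicit descriptions of both summands, since the $9\times 9$ change-of-basis matrix from $\{T_a,T_b,T_x,T_y,T_z\}\cup\{P_\alpha,P_\xi,P_\eta,P_\zeta\}$ to the elementary matrices is block-diagonal with blocks of determinant $\pm 2$, $1$, $\pm(1+\delta^2)$, $\pm(1+\delta^4)$, all units.

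Your approach via the Gram matrix of the pairing $\jc \times \jc^\Tr \to \lri_\ell$ is more conceptual: it establishes both trivial intersection and surjectivity in one stroke, and you correctly emphasise that over the local ring $\lri_\ell$ one needs the unit determinant rather than a rank count. The diagonal Gram entries $2,\,1,\,1+\delta^2,\,1+\delta^2,\,1+\delta^4$ you found are exactly right, and the cross terms vanish as you say. The trade-off is that the paper's computation yields an explicit description of $\jc(\lri_\ell)^\perp$, which is used immediately afterwards (e.g., in writing down coadjoint representatives and in the proof of Proposition~\ref{prop.coadF}); your argument, while cleaner for the lemma itself, does not produce this description as a by-product. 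Your treatment of the unitary case is a little brisk but on the right track: one should note that $A\mapsto A^\Tr$ preserves $\gu_3(\lri_\ell)$ (since $W=W^\Tr=W^{-1}$ for $d=3$), so $\jc_U^\Tr\subset\gu_3$, and then choose $\lri_\ell$-bases of $\jc_U$ and $\jc_U^\Tr$ adapted to the conditions $a,b,z\in\gu_1(\lri_\ell)=\nonsq\lri_\ell$ and $y=-x^\circ$; the resulting $5\times 5$ Gram matrix again has unit determinant.
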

\begin{proof}  We first check that
  \[
    \mfj_{L}^\perp=\left\{ B  \mid  \tr(AB)=0, \forall A \in \mfj_L \right\}=\left\{ \left[\begin{matrix} \alpha & \xi & \zeta\\  -\delta \eta& 0& \eta\\ -\delta^2\zeta&  -\delta \xi & -\alpha \end{matrix}\right] \mid \xi,\eta,\zeta, \alpha \in \lri_\ell \right\}.
  \]
  Indeed,
  \begin{multline*}
    \tr\left( \left[\begin{matrix} \nu_{11} & \nu_{12} & \nu_{13} \\  \nu_{21} & \nu_{22} & \nu_{23} \\ \nu_{31}&  \nu_{32} & \nu_{33} \end{matrix}\right]\!\!\!\left[\begin{matrix} a & x & z \\  \delta y & b & y \\ \delta^2z&  \delta x & a \end{matrix}\right]\right)\\=(\nu_{11}+\nu_{33})a+\nu_{22}b+(\nu_{21}+\nu_{23}\delta)x +(\nu_{32}+\nu_{12}\delta)y+(\nu_{31}+\nu_{13}\delta^2)z,
  \end{multline*}
  and this should be $0$ for every $a,b,x,y,z$, which gives the desired complement. It is now immediate to check that $\jc(\lri_\ell)^\Tr \cap \jc(\lri_\ell)^\perp=(0)$, and similarly for $\mfj_{c,U}$, by equating
  \[
    \left[\begin{matrix} a & \delta y & \delta^2z \\  x & b & \delta x \\ z&  y & a \end{matrix}\right] = \left[\begin{matrix} \alpha & \xi & \zeta\\  -\delta \eta& 0& \eta\\ -\delta^2\zeta&  -\delta \xi & -\alpha \end{matrix}\right]. \qedhere
  \]
\end{proof}

\subsection{Co-adjoint orbits} The structure of co-adjoint orbits is governed by the field case $\ell=1$, which we treat first.

\subsubsection{Co-adjoint orbits in the field case}\label{sec.coad.F} Assume now that $\ell=1$. In this case, we are looking at the action of $\JJc(\kk)$ on $\mfg(\kk)/\jc(\kk)^\perp$. We can translate this action to an action on $\jc(\kk)^\Tr$ by
\[
  g: \jc(\kk)^\Tr \to \jc(\kk)^\Tr, \quad X \mapsto X'= \mathrm{proj}_{\jc(\kk)^\Tr}\left(gXg^{-1}\right),
\]
for $g =  \left[\begin{matrix} a & x & z \\ & b & y \\ &  & a \end{matrix}\right] \in \JJc(\kk)$ and $X=X(\alpha,\beta, \sigma, \tau, \nu)$, explicitly given by

\begin{equation}\label{coad.ac.F}
  \left[\begin{matrix} \alpha &  &  \\   \sigma & \beta &  \\ \nu &  \tau & \alpha \end{matrix}\right] \! \mapsto  \!\left[\begin{array}{lll} \alpha + \frac{x}{2a}\sigma -\frac{y}{2b}\tau+\frac{xy}{2ab}\nu &  &  \\   \frac{b}{a}\sigma +\frac{y}{a}\nu& \!\!\beta-\frac{x}{a}\sigma +\frac{y}{b}\tau-\frac{xy}{ab}\nu &  \\ \nu &  \frac{a}{b}\tau -\frac{x}{b}\nu & \!\! \alpha +  \frac{x}{2a}\sigma -\frac{y}{2b}\tau+\frac{xy}{2ab}\nu\end{array}\!\!\right]\!\!.
\end{equation}

\begin{proposition}\label{prop.coadF} The coadjoint orbits of $\JJc(\rf)$ on the cross section $\jc(\kk)^\Tr$ are given as follows. Let
  \[
    X=X(\alpha,\beta,\sigma,\tau,\nu)
    = \left[\begin{matrix} \alpha &  &  \\   \sigma & \beta &  \\ \nu &  \tau & \alpha \end{matrix}\right] \in \jc(\kk)^\Tr.
  \]

  For $\bfJ=\JLc$, $\alpha,\beta,\sigma,\tau, \nu \in \rf$, and the orbits are classified by
          \medskip

          \!\!\begin{tabular}{ | l |  l  |  l  |  l  |  l |}
            \hline
            type    & \# of orbits & size     & representatives                   & invariants of the orbits                                                     \\
            \hline
            \hline
            $(i)$   & $q^2$        & $1$      & $X(\alpha,\beta,0,0,0)$           & $\alpha , \beta \in \kk$                                                     \\
            \hline
            $(ii)$  & $q^2(q-1)$   & $q^2$    & $X(\alpha,\beta,\sigma,\tau,\nu)$ & $\nu \in \rf^\times$, $2\alpha+\beta \in \rf$, $\sigma\tau-\beta\nu \in \rf$ \\
            \hline
            $(iii)$ & $q(q+1)$     & $q(q-1)$ & $X(\alpha,\beta,\sigma,\tau,0)$   & $2\alpha+\beta \in \kk$ ,  $[\sigma:\tau] \in \mathbb{P}^1(\rf)$             \\
            \hline
          \end{tabular}

          \medskip

    For $\bfJ=\JUc$, $\alpha,\beta,\sigma,\tau, \nu \in \Rf$ satisfying $\alpha, \beta, \nu \in \gu_1(\kk)$ and $\sigma+\tau^\circ=0$, and the orbits are classified by

          \medskip

          \!\!\begin{tabular}{ | l |  l  |  l  |  l  |  l |}
            \hline
            type    & \# of orbits & size     & representatives                            & invariants of the orbits                                                         \\
            \hline
            \hline
            $(i)$   & $q^2$        & $1$      & $X(\alpha,\beta,0,0,0)$                    & $\alpha , \beta \in \gu_1(\kk)$                                                  \\
            \hline
            $(ii)$  & $q^2(q-1)$   & $q^2$    & $X(\alpha,\beta,\sigma,-\sigma^\circ,\nu)$ & $0 \ne \nu, 2\alpha+\beta \in \gu_1(\rf)$, $\sigma^\circ\sigma+\beta\nu \in \rf$ \\
            \hline
            $(iii)$ & $q(q-1)$     & $q(q+1)$ & $X(\alpha,\beta,\sigma,-\sigma^\circ,0)$   & $2\alpha+\beta \in \gu_1(\kk)$,  $[\sigma:-\sigma^\circ] \in \mathbb{P}^1(\Rf)$  \\
            \hline
          \end{tabular}

          \medskip

\end{proposition}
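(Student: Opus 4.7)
The plan is to analyze the action \eqref{coad.ac.F} directly: extract the obvious invariants, case-split according to $\nu$, and in each case either normalize to a canonical representative or count orbits via the residual torus action. The argument runs in parallel for $\JLc$ and $\JUc$, with the only changes coming from the Hermitian constraints on the parameters and from the different order of the diagonal torus.

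First I would read off the invariants. From \eqref{coad.ac.F} the entry $\nu$ is manifestly preserved, and direct substitution shows $2\alpha' + \beta' = 2\alpha + \beta$ because the variations of $\alpha$ and $\beta$ cancel in this combination. A short calculation then yields $\sigma'\tau' - \sigma\tau = -(x/a)\sigma\nu + (y/b)\tau\nu - (xy/ab)\nu^2$, which is precisely $(\beta' - \beta)\nu$; hence $\sigma\tau - \beta\nu$ is invariant, and when $\nu=0$ it degenerates to $\sigma\tau$.

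Next I would stratify into three cases. If $\sigma=\tau=\nu=0$ the action on $X$ is the identity, yielding the $q^2$ singleton orbits of type $(i)$. If $\nu\ne 0$, choosing $y = -(b/\nu)\sigma$ and $x = (a/\nu)\tau$ kills $\sigma'$ and $\tau'$, so every orbit meets the section $\{X(\alpha',\beta',0,0,\nu)\}$; the residual stabilizer ($x=y=0$ with $a,b,z$ free) has size $(q-1)^2 q$, giving orbits of size $q^2$, and the triple $(\nu,2\alpha+\beta,\sigma\tau-\beta\nu)$ labels the $q^2(q-1)$ orbits of type $(ii)$. If $\nu=0$ but $(\sigma,\tau)\ne(0,0)$, the action reduces to $\sigma\mapsto (b/a)\sigma$, $\tau\mapsto (a/b)\tau$ (preserving $\sigma\tau$), while $x,y$ can be used to shift $\alpha-\beta$ freely whenever one of $\sigma,\tau$ is nonzero; the orbit is then determined by $2\alpha+\beta$ together with the $\kk^\times$-orbit of the pair $(\sigma,\tau)$, and a direct stabilizer computation gives orbits of size $q(q-1)$, producing the $q(q+1)$ orbits of type $(iii)$ as one runs over $[\sigma:\tau]\in\pP^1(\kk)$ (splitting into the generic class, $[1:0]$, and $[0:1]$).

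Finally I would transfer everything to $\JUc$, where the Hermitian constraints force $\alpha,\beta,\nu\in\gu_1(\kk)$, $\tau=-\sigma^\circ$, and the diagonal torus entries satisfy $b/a\in\GU_1(\kk)$. Types $(i)$ and $(ii)$ are essentially unchanged once one checks that $\sigma\tau-\beta\nu = -\sigma^\circ\sigma-\beta\nu$ lies in $\kk$ and that $\nu$ ranges over $\gu_1(\kk)\setminus\{0\}$ of cardinality $q-1$. The essential change occurs in type $(iii)$: the action on nonzero $\sigma\in\Rf$ is by $\GU_1(\kk)$ of order $q+1$, and Hilbert~90 identifies the orbits with the image of the norm $\sigma\mapsto\sigma^\circ\sigma$ in $\kk^\times$, giving $q-1$ orbits; combined with the $q$ choices of $2\alpha+\beta$ this yields the claimed $q(q-1)$ orbits of size $q(q+1)$. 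The principal bookkeeping burden, and the step I expect to be most delicate, is precisely this unitary case: verifying that the action preserves the Hermitian conditions, that the invariants land in the expected rational forms $\gu_1(\kk)$ or $\kk$, and that orbits on $\Rf^\times$ under the norm-one torus are correctly identified with $\kk^\times$ via Hilbert~90. Once these are in hand, all orbit sizes follow from the formulas $|\JLc(\kk)|=(q-1)^2q^3$ and its unitary analogue.
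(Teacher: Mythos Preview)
Your proposal is correct and follows essentially the same approach as the paper: compute the projected action \eqref{coad.ac.F} explicitly, stratify by $\nu$, and determine orbits via stabiliser computations. You give more detail than the paper's sketch---in particular you verify the invariants $2\alpha+\beta$ and $\sigma\tau-\beta\nu$ directly and spell out the Hilbert~90 argument for the unitary type~$(iii)$---but the underlying method is the same.
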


\begin{proof}[Sketch of proof] For proof, first we compute $gXg^{-1}$ and project onto $\jc(\rf)^{\Tr}$. The projection in this case is easy: replace entries above the diagonal with zeros and replace the (1,1) and (3,3) entries by their average $(x'_{11}+x'_{33})/2$. The numerical values are computed using a count of the total number of elements of each type and the computation of the stabilisers for each type:

  \begin{enumerate}

    \item [(i)] The stabiliser is the whole group (and these are the only case with that stabiliser).

    \item[(ii)] $\nu \ne 0$; then
      \[
        \Stab_{\JJc (\rf)}(X)= \left\{ \left[\begin{matrix} a & x & z \\ & b & y \\ &  & a \end{matrix}\right] \mid  a,b,z \in \Rf, \quad \begin{matrix} x= \nu^{-1}\tau (a-b),  \\ y=\nu^{-1}\sigma(a - b) \end{matrix}\right\} \cap \bfG(\rf).
      \]

    \item[(iii)] $\nu = 0$;  if $\sigma \ne 0$, then
      \[
        \Stab_{\JJc (\kk)}(X)= \left\{ \left[\begin{matrix} a & x & z \\ & a & y \\ &  & a \end{matrix}\right] ~\mid a,y,z \in \Rf, x=\sigma^{-1}\tau y \right\} \cap \bfG(\kk).
      \]
      If $\tau \ne 0$ then we get a similar group with $x$ a free parameter and $y=\tau^{-1}\sigma x$.  \qedhere
  \end{enumerate}
\end{proof}

\subsubsection{Branching rules for the general case}

The following is an analogue of Proposition~\ref{fibre.over.e}.
\begin{proposition}[co-adjont orbits of $\JJc$ on $\jc^\vee$]

  Every $\Ad(\JJc(\lri_\ell))$-orbit $\Omega \subset \mfg(\lri_\ell)/\jc(\lri_\ell)^\perp \cong \jc(\lri_\ell)^\vee$ has a representative $W \in \jc(\Lri_\ell)^\Tr \cap \mfg(\lri_\ell)$ of the form
  \begin{equation}\label{can.form}
    W=W(\ell, m; \alpha,\beta,\sigma,\tau,\nu)
    = \underbrace{\left[\begin{matrix} \alpha& 0  &0 \\  0  & \beta & 0 \\ 2(\beta-\alpha) \delta  & 0  & \alpha \end{matrix}\right]}_{U} + \underbrace{  \left[\begin{matrix} 0 & 0 & 0 \\   \sigma & 0 & 0 \\ \nu &  \tau & 0 \end{matrix}\right]}_{V},
  \end{equation}
  with $m = \min \{\val(\sigma), \val(\nu), \val(\tau)\} $ and
  \begin{itemize}
    \item In the linear case $\alpha,\beta, \sigma,\tau,\nu \in \lri_{\ell}$;

    \item In the unitary case $\alpha+\alpha^\circ=\beta+\beta^\circ$ and $\nu+\nu^\circ=\sigma+\tau^\circ=0$.

  \end{itemize}
  Moreover, $ U$ as given in (\ref{can.form}) is $\Ad(\JJ(\lri_\ell))$-stable.

\end{proposition}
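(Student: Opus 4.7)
My plan is to first establish the \emph{moreover} clause (that $U$ is $\Ad(\JJc(\lri_\ell))$-stable modulo $\jc(\lri_\ell)^\perp$) and then deduce the canonical form of $W$ from it. By Lemma~\ref{lem:g-decomposition-for-j}, every class in $\mfg(\lri_\ell)/\jc(\lri_\ell)^\perp$ has a unique representative in the transversal $\jc(\Lri_\ell)^\Tr \cap \mfg(\lri_\ell)$, and this transversal admits a natural five-parameter presentation by coordinates $(\alpha,\beta,\sigma,\tau,\nu)$ compatible with the field-case description in Proposition~\ref{prop.coadF}. Writing such a representative as $W = U + V$ is then a redundant splitting that isolates a distinguished ``central'' part $U$ and a three-parameter ``lower-triangular'' part $V$; the shift $2(\beta-\alpha)\delta$ appearing in the $(3,1)$-entry of $U$ is precisely the correction required to make $U$ truly stable under the coadjoint action beyond the residue field layer, where $\delta$ becomes nontrivial.

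The key identity underpinning the stability of $U$ is $[\jc, U]\subseteq \jc^\perp$ at the level of Lie algebras. Writing $U = \alpha I + (\beta-\alpha)F$ with
\[
F = E_{22} + 2\delta E_{31},
\]
the scalar term $\alpha I$ is central, so the claim reduces to $[\jc, F]\subseteq \jc^\perp$. This is a direct matrix computation on the five generators $E_{11}+E_{33}$, $E_{22}$, $E_{12}+\delta E_{32}$, $\delta E_{21}+E_{23}$, $E_{13}+\delta^2 E_{31}$ of $\jc$, compared against the explicit description of $\jc^\perp$ given in the proof of Lemma~\ref{lem:g-decomposition-for-j}. For example,
\[
[E_{12}+\delta E_{32},\, F] \;=\; E_{12}-\delta E_{32},
\]
whose entries $(1,2)=1$ and $(3,2)=-\delta$ satisfy the defining relation $(3,2) = -\delta\cdot(1,2)$ of $\jc^\perp$. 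Passage from the Lie algebra to the group uses that $\jc^\perp$ is $\ad(\jc)$-invariant (a standard trace-form argument, since $\jc$ is a Lie subalgebra), so that every iterated bracket appearing in the Baker--Campbell--Hausdorff expansion of $\Ad(\exp X)\,U - U$ remains in $\jc^\perp$ for $X\in\pi\jc(\lri_\ell)$. The residual action of the top quotient $\bfJ(\rf)$, where exponentials are unavailable, is treated using the field-level formula~\eqref{coad.ac.F}: setting $\sigma=\tau=\nu=0$ there leaves $\alpha,\beta$ unchanged, confirming that $U$ is fixed modulo $\jc^\perp$.

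Once $U$ is known to be stable, the decomposition $W = U + V$ yields the invariant $m = \min\{\val(\sigma),\val(\tau),\val(\nu)\}$ of the orbit, because the residual coadjoint action acts within the three-parameter family parameterising $V$. For the unitary case, the trace-form identification of $\jc(\lri_\ell)^\vee$ with the relevant quotient is $\JUc(\lri_\ell)$-equivariant and compatible with the Galois involution $\circ$, so the anti-Hermitian constraints $\alpha+\alpha^\circ=\beta+\beta^\circ$ and $\sigma+\tau^\circ=\nu+\nu^\circ=0$ are preserved under the action; the rigidity of anti-Hermitian similarity (\cite[Prop.~3.2]{AKOV2}) then lets us transfer the normal form from $\GL_3(\Lri_\ell)$-conjugation to $\GU_3(\lri_\ell)$-conjugation. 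The main technical obstacle I anticipate is pinning down the precise coefficient $2\delta$ in $F$: it is forced exactly by the requirement $[\jc,F]\subseteq\jc^\perp$, and in the unitary case propagating this correction through the Galois-involution constraints (in particular the coupling $\sigma+\tau^\circ=0$ between the $(2,1)$ and $(3,2)$ entries) demands careful bookkeeping on each residue-field layer.
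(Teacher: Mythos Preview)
Your core computation, that $[\jc,F]\subseteq\jc^\perp$ for $F=E_{22}+2\delta E_{31}$, is correct and is the heart of the matter; the paper does \emph{not} do this. Instead the paper observes the single identity
\[
\delta F \equiv \bfc \pmod{\jc^\perp},
\]
where $\bfc$ is precisely the matrix whose centraliser defines $\JJ$. Since $\Ad(\JJ)\bfc=\bfc$ and $\jc^\perp$ is $\Ad(\JJ)$-stable, the coset $\delta F+\jc^\perp$ is fixed; one then argues (implicitly, since $\delta$ is not a unit) by treating $\delta$ as a formal variable, or by the same direct check you carry out. Your bracket verification is more laborious but entirely self-contained, while the paper's observation explains \emph{why} the specific correction $2\delta$ in the $(3,1)$-entry is forced: it is exactly what makes $\delta F$ agree with the defining element $\bfc$ modulo $\jc^\perp$.

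There is one soft spot in your passage from Lie algebra to group. Your BCH argument only handles $g=\exp(X)$ with $X\in\pi\jc(\lri_\ell)$, and you then appeal to the field formula~\eqref{coad.ac.F} for the residual $\JJ(\rf)$-action. But the field formula lives over $\rf$, where $\delta=0$; it does not a priori control the action of \emph{lifts} of $\JJ(\rf)$ on $F$ over $\lri_\ell$, where the $2\delta E_{31}$ term is present. The cleanest fix bypasses BCH entirely: since $\wt\JJ=\jc$ as $\lri_\ell$-modules, your inclusion $[\jc,F]\subseteq\jc^\perp$ says $gF-Fg\in\jc^\perp$ for every $g\in\wt\JJ$ (associative commutator equals Lie bracket). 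Moreover $\jc^\perp$ is a two-sided $\wt\JJ$-module, because for $X\in\jc^\perp$, $h\in\wt\JJ$, $Y\in\jc$ one has $\tr(hX\cdot Y)=\tr(X\cdot Yh)=0$ as $Yh\in\jc$. Hence for $g\in\JJ=\wt\JJ^\times$,
\[
\Ad(g)F-F=(gF-Fg)g^{-1}\in\jc^\perp\cdot\wt\JJ\subseteq\jc^\perp,
\]
which gives the full group statement in one line.
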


\begin{proof} The form of the matrix $W$ follows from Lemma~\ref{lem:g-decomposition-for-j}. The part $U$ is $\mathrm{Ad}(\JJ(\cO_\ell)$ follows because
\[
\left[\begin{matrix} \alpha& 0  &0 \\  0  & \beta & 0 \\ 2(\beta-\alpha) \delta  & 0  & \alpha \end{matrix}\right] = \alpha I + (\beta-\alpha) \left[\begin{matrix} 0 & 0  & 0 \\  0  & 1 & 0 \\ 2 \delta  & 0  &  0 \end{matrix}\right]
\]
and 
\[
\delta \left[\begin{matrix} 0 & 0  & 0 \\  0  & 1 & 0 \\ 2 \delta  & 0  &  0 \end{matrix}\right] = \left[\begin{matrix} 0 & 0  & 1 \\  0  & \delta & 0 \\ \delta^2  & 0  &  0 \end{matrix}\right] \mod (\mfj_L^{\perp}). 
\]

\end{proof}

The following  is an analogue of Theorem~\ref{branching.e.GL}. 

\begin{theorem}\label{branch.Jc} Let $W=W(\ell,\ell; \alpha_0,\beta_0, 0, 0, 0)  \in \mfj(\Lri_\ell)^\Tr \cap \mfg(\lri_\ell)$ be as in \eqref{can.form} with $m=\ell$. Then
  \begin{enumerate}

    \item  The fibre $\phi^{-1}_{\ell+1,\ell}(W)$ is a torsor over $\mfj(\rf)^\Tr = \mfj(\Rf)^\Tr \cap \mfg(\rf)$.

    \item The fibre $\phi^{-1}_{\ell+1,\ell}(W)$ is a $\JJ(\lri_{\ell+1})$-space and the $\JJ(\lri_{\ell+1})$-orbits are in bijection with the $\JJ(\rf)$-orbits in $\mfj(\rf)^\vee$.

  \end{enumerate}

  \begin{proof} Fix a base point $$\wt{W}=\wt{W}(\ell+1,\ell+1; \wt{\alpha}_0,\wt{\beta}_0, 0, 0, 0) \in \phi^{-1}_{\ell+1,\ell}(W)$$
    with $\wt{\alpha}_0 \equiv_{\ell} \alpha_0$ and $\wt{\beta}_0 \equiv_{\ell} \beta_0$. 
    Then every element in the fibre has the form $\wt{W}+\pi^\ell X$ for $X \in \mfj(\kk)^\Tr$.
    As $W$ is $\JJ(\lri_\ell)$-stable, $\phi^{-1}_{\ell+1,\ell}(W)$ is a $\JJ(\lri_{\ell+1})$-space. As $\wt{W}$ is $\JJ(\lri_{\ell+1})$-stable, the conjugation action  $g(\wt{W}+\pi^\ell X)g^{-1}$ translates into conjugation action $\ol{g}X\ol{g}^{-1}$ of the reduction modulo $\pi$ on the $\mfj(\rf)^\Tr$ as in \eqref{coad.ac.F}.
  \end{proof}

\end{theorem}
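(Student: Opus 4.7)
The plan is to verify the two claims in turn and then deduce the orbit correspondence. For part~(1), the fiber $\phi^{-1}_{\ell+1,\ell}(W)$ is the preimage of $W$ under the $\lri_\ell$-linear reduction $\mfj(\Lri_{\ell+1})^\Tr \cap \mfg(\lri_{\ell+1}) \to \mfj(\Lri_\ell)^\Tr \cap \mfg(\lri_\ell)$. Picking any lift $\wt{W}$ identifies the fiber with $\wt{W} + \pi^\ell\bigl(\mfj(\Lri_{\ell+1})^\Tr \cap \mfg(\lri_{\ell+1})\bigr)$, and the kernel $\pi^\ell(\,\cdot\,)$ is canonically isomorphic to $\mfj(\kk)^\Tr$ (using Lemma~\ref{lem:g-decomposition-for-j} together with rigidity of the Hermitian structure in the unitary case). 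This exhibits the torsor structure.

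For part~(2), I would take as base point the distinguished lift $\wt{W} = \wt{W}(\ell+1,\ell+1;\wt{\alpha}_0,\wt{\beta}_0,0,0,0)$ with $\sigma=\tau=\nu=0$. The same computation that showed $U$ is $\Ad(\JJ(\lri_\ell))$-stable in the previous proposition — writing $\wt{W} = \wt{\alpha}_0 I + (\wt{\beta}_0-\wt{\alpha}_0) M$ with $\delta M \equiv \mathbf{c} \pmod{\mfj^\perp}$ — goes through verbatim one level higher and shows that $\wt{W}$ is $\Ad(\JJ(\lri_{\ell+1}))$-stable modulo $\jc^\perp$. Consequently the fiber $\phi^{-1}_{\ell+1,\ell}(W)$ is a $\JJ(\lri_{\ell+1})$-invariant subset of $\mfj(\lri_{\ell+1})^\vee$.

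Next, for $g \in \JJ(\lri_{\ell+1})$ and $X \in \mfj(\kk)^\Tr$, the coadjoint action on the cross section unfolds as
\[
g(\wt{W} + \pi^\ell X)g^{-1} \equiv \wt{W} + \pi^\ell\,gXg^{-1} \pmod{\pi^{\ell+1}\mfg(\lri_{\ell+1}) + \jc^\perp},
\]
since the first summand is fixed and the factor $\pi^\ell$ kills everything beyond the leading order in $g$. Therefore the action on the fiber factors through $\bar g \in \JJ(\kk)$, and once we project to $\mfj(\kk)^\Tr$ it is exactly the coadjoint action of $\JJ(\kk)$ on $\mfj(\kk)^\vee$ as analysed in Section~\ref{sec.coad.F}. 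The orbit bijection follows immediately, and the count of orbit sizes is then read off from Proposition~\ref{prop.coadF}.

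The main obstacle, and really the only non-routine verification, is ensuring that $\wt{W}$ can be chosen $\JJ(\lri_{\ell+1})$-stable in $\mfj^\vee$ rather than merely $\JJ(\lri_\ell)$-stable; this rests on the explicit decomposition of $W$ in terms of $I$ and $\mathbf{c}$ modulo $\jc^\perp$, which lifts without obstruction. A secondary technical point is compatibility with the Hermitian structure in the unitary case, which is handled by the $\GL_d(\Lri_\ell)$-vs-$\GU_d(\lri_\ell)$ rigidity of Proposition~3.2 of~\cite{AKOV2}.
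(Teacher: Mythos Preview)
Your proof is correct and follows essentially the same approach as the paper: both arguments pick the distinguished base point $\wt{W}=\wt{W}(\ell+1,\ell+1;\wt{\alpha}_0,\wt{\beta}_0,0,0,0)$, identify the fibre with $\wt{W}+\pi^\ell\mfj(\kk)^\Tr$, and use the $\JJ(\lri_{\ell+1})$-stability of $\wt{W}$ to reduce the action on the fibre to the coadjoint action of $\JJ(\kk)$ on $\mfj(\kk)^\vee$. You supply slightly more justification (the explicit $I$-and-$\mathbf{c}$ decomposition and the Hermitian rigidity reference), but the logic is identical.
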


\subsubsection{Explicit action and stabilisers in the ring case} We repeat the ideas from \S\ref{sec.coad.F} and adapt them to the more general situation. From Maple we are getting the following.

\begin{proposition}[Stabilisers]
  \label{prop:j-stabilisers}
  The stabiliser of the character $W=W(\ell,m;\alpha,\beta,\sigma,\tau, \nu)$ from \eqref{can.form} is

  \begin{enumerate}

    \item[(i)] if $m=\ell$ the stabiliser is the whole group.

    \item[(ii)] if $m < \ell$ and $\nu \not \equiv 0 \mod (\pi)$, then
      \[
        \Stab_{\bf J(\lri_\ell)}(W)= \left\{ \left[\begin{matrix} a & x & z \\ \delta y& b & y \\ \delta^2z & \delta x& a \end{matrix}\right] ~\Bigg|~ \begin{matrix} a,b,z \in \Lri_\ell \qquad \qquad \qquad \\ x \equiv_{{\ell-m}} \tau \nu^{-1}(a - b+\delta z) \\ y \equiv_{{\ell-m}} \sigma \nu^{-1} (a - b+\delta z)\end{matrix}\right\} \cap \bfG(\lri_\ell).
      \]

    \item[(iii)] if $m<\ell$, $\nu \equiv 0 \mod(\pi)$ and $\sigma \not \equiv 0 \mod(\pi)$, then
      \[
        \Stab_{\bfJ(\lri_\ell)}(W)= \left\{ \left[\begin{matrix} a & x & z \\ \delta y& b & y \\  \delta^2z &  \delta x& a \end{matrix}\right] ~\Bigg|~ \begin{matrix} a,y,z \in \Lri_\ell  \\ b \equiv_{{\ell-m}} a+ \delta z- \sigma^{-1} \nu y \\ x \equiv_{{\ell-m}}\sigma^{-1}\tau y \end{matrix}\right\} \cap \bfG(\lri_\ell).
      \]

      \noindent if $m < \ell$, $\nu \equiv 0 \mod (\pi)$ and $\tau \not \equiv 0 \mod(\pi)$, then
      \[
        \Stab_{\bfJ(\lri_\ell)}(W)= \left\{ \left[\begin{matrix} a & x & z \\  \delta y& b & y \\ \delta^2z &  \delta x& a \end{matrix}\right] ~\Bigg|~ \begin{matrix} a,x,z \in \Lri_\ell   \\ b \equiv_{{\ell-m}} a+ \delta z- \tau^{-1}\nu  x \\ y \equiv_{{\ell-m}}\tau^{-1}\sigma x  \end{matrix}\right\} \cap \bfG(\lri_\ell).
      \]

  \end{enumerate}

\end{proposition}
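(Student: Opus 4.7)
\textbf{Proof plan for Proposition~\ref{prop:j-stabilisers}.}

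The key reduction is that $g \in \Stab_{\bfJ(\lri_\ell)}(\psi_W)$ if and only if $gWg^{-1} - W \in \mfj(\lri_\ell)^\perp$, which by the cyclicity of the trace is the precise condition for $\psi_W(g^{-1}Yg) = \psi_W(Y)$ to hold for every $Y \in \mfj(\lri_\ell)$. Since the proposition (and its immediate predecessor) asserts that $U$ is $\Ad(\bfJ(\lri_\ell))$-stable, we have $gUg^{-1}-U \in \mfj(\lri_\ell)^\perp$ for all $g$, so the problem reduces to determining when $gVg^{-1}-V \in \mfj(\lri_\ell)^\perp$, where $V$ is the strictly lower triangular piece of $W$ carrying the three parameters $\sigma,\tau,\nu$ of common minimal valuation~$m$. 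This already disposes of case (i): when $m=\ell$, $V$ vanishes in $\mfg(\lri_\ell)$ and the stabiliser is all of $\bfJ(\lri_\ell)$.

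For $m<\ell$, I would carry out the matrix calculation in the cross-section picture. Parametrise $g = \left[\begin{smallmatrix} a & x & z \\ \delta y & b & y \\ \delta^2 z & \delta x & a \end{smallmatrix}\right] \in \bfJ(\lri_\ell)$ and compute the products $gV$ and $Vg$ directly; since $gVg^{-1}-V = (gV-Vg)g^{-1}$ and $g \equiv \mathrm{diag}(a,b,a) \pmod{\mfj(\lri_\ell)^1}$, the condition $gVg^{-1}-V \in \mfj(\lri_\ell)^\perp$ translates, after projecting onto the three-dimensional quotient $\mfj(\lri_\ell)^\vee / (\text{invariants})$ via the explicit description of $\mfj(\lri_\ell)^\perp$ in Lemma~\ref{lem:g-decomposition-for-j}, into three congruences modulo $\pi^{\ell-m}$ (the factor $\pi^m$ in $V$ means that all quadratic correction terms lie in $\pi^{2m}\lri_\ell \subseteq \pi^\ell\lri_\ell$ whenever $2m \ge \ell$, and otherwise the same reduction applies after expanding $g^{-1}$). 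After cleaning up these are the three equations, obtained by inspecting the coadjoint action in Proposition~\ref{prop.coadF} and replacing $a-b$ by $a-b+\delta z$ to reflect the centralising element $\bfc$:
\begin{align*}
\sigma(a-b+\delta z) &\equiv y\,\nu \pmod{\pi^{\ell-m}}, \\
\tau(a-b+\delta z) &\equiv x\,\nu \pmod{\pi^{\ell-m}}, \\
\sigma\,x &\equiv \tau\,y \pmod{\pi^{\ell-m}}.
\end{align*}

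A case analysis on which of $\sigma,\tau,\nu$ is a unit at level $\pi^m$ now gives the stated stabilisers. In case (ii), $\nu$ is a unit (up to $\pi^m$), so the first two congruences solve uniquely for $y = \sigma\nu^{-1}(a-b+\delta z)$ and $x = \tau\nu^{-1}(a-b+\delta z)$; the third congruence is then a consequence, so $a,b,z$ remain free parameters in $\Lri_\ell$. In case (iii), $\nu$ is not a unit but one of $\sigma,\tau$ is; say $\sigma$ is a unit (the $\tau$-subcase is symmetric). Then the third congruence solves for $x = \sigma^{-1}\tau\, y$, and substituting back into the first gives $a+\delta z - b \equiv \sigma^{-1}\nu\, y \pmod{\pi^{\ell-m}}$, i.e.\ $b \equiv a+\delta z - \sigma^{-1}\nu\, y$; the second congruence is automatic. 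Intersecting with $\bfG(\lri_\ell)$ yields precisely the subgroups displayed in the proposition.

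The principal obstacle in this strategy is purely computational: the direct expansion of $gVg^{-1}$ and the extraction of the three clean congruences above is intricate enough that the authors rely on a computer algebra system. Once that bookkeeping is done, the rest is linear algebra over $\lri_{\ell-m}$, with case (i) trivial, cases (ii)--(iii) distinguished by the unit structure of $(\sigma,\tau,\nu)$, and the rigidification of the ``$a-b+\delta z$'' combination reflecting that $\bfJ$ is the centraliser of $\bfc$ rather than of a diagonal matrix.
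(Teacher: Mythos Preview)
Your proposal is correct and follows the same computational route as the paper, which simply records the outcome of a Maple calculation after pointing back to the field case in Proposition~\ref{prop.coadF}. One remark that removes the computational obstacle you flag at the end: since $\mfj(\lri_\ell)^\perp=[\bfc,\mfg(\lri_\ell)]$ and every $g\in\bfJ(\lri_\ell)$ centralises $\bfc$, one has $g\,\mfj(\lri_\ell)^\perp=\mfj(\lri_\ell)^\perp g=\mfj(\lri_\ell)^\perp$, so the stabiliser condition $gWg^{-1}-W\in\mfj(\lri_\ell)^\perp$ is equivalent to the \emph{linear} condition $Wg-gW\in\mfj(\lri_\ell)^\perp$; computing $Vg-gV$ by hand and reading off the $(2,2)$-entry together with the combinations $(2,1)+\delta\cdot(2,3)$ and $(3,2)+\delta\cdot(1,2)$ yields exactly your three congruences, with no quadratic corrections and no need for a computer algebra system.
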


\subsection{Construction of the irreducible representations}\label{sec:construction-J}
In this section we construct the irreducible representations of $\JJ(\lri_\ell)$ in two different to ways. The first construction is analogous to the construction of regular characters of $\bfG(\lri_\ell)$ as given in ~\cite{KOS}. The second construction is analogous to the non-regular irreducible characters of $\bfG(\lri_\ell)$ as given in Section~\ref{sec:construction-of-non-regular-characters-G}. We will use both constructions to describe the non-regular part of the representation zeta function of~$\bfG(\lri_\ell)$.

\subsubsection{Construction~1}\label{sec:construction-1-J}

Consider the smallest congruence subgroup $\JJ(\lri_\ell)^{\ell-1} \cong (\mfj(\lri_1),+)$ of the group~$\JJ(\lri_\ell)$.
The characters of $\JJ(\lri_\ell)^{\ell-1}$ are of the form $\psi_{\bar{X}}$ with $\bar{X}=\bar{X}(\bar{\alpha},\bar{\beta},\bar{\sigma},\bar{\tau},\bar{\nu})$. Their $\JJ(\lri_\ell)$-orbits are given explicitly in
Proposition~\ref{prop.coadF}. 

We first consider the case of $\bsigma = \btau = \bnu =0$.
By Lemma~\ref{lem:comm-subgp-J} and  Proposition~\ref{prop:comm.sbgp-extension}, we have $[\JJ(\cO_\ell), \JJ(\cO_\ell)] \cap \JJ(\cO_\ell)^{\ell-1} \subseteq \Ker(\psi_{X(\bar{\alpha}, \bar{\beta}, 0,0,0)})$, and therefore $\psi_{X(\bar{\alpha}, \bar{\beta}, 0,0,0)}$ extends to $\JJ(\cO_\ell)$ for every $\bar{\alpha}, \bar{\beta} \in \rf$. By Clifford theory, $\mathrm{Irr}(\JJ(\cO_\ell) \mid \psi_{X(\bar{\alpha}, \bar{\beta}, 0,0,0)}) $ are obtained from the irreducible representations of $\JJ(\cO_{\ell-1})$. 

We now focus on the remaining cases. Let $\bX = \bX(\balpha,\bbeta, \bsigma, \btau, \bnu) \in (\mfj(\cO_1)^\Tr \cap \mfg(\cO_1))$ such that  $(\bsigma, \btau, \bnu) \neq (0,0,0)$. Let $X = X(\ell, m; \alpha,\beta, \sigma, \tau, \nu) \in \mfg(\cO_\ell)$ be a lift of $ \bX=\bX(\balpha,\bbeta, \bsigma, \btau, \bnu)$ . Define $\psi_{X}: \JJ(\cO_\ell)^\lup \rightarrow \mathbb C^\times$ by 
\[
\psi_X(\mathrm{exp} (\pi^{\lup} Y) ) = \psi (\pi^{\lup} \tr(XY)).   
\]
The map $\psi_X$ is a well defined character of $\JJ(\cO_\ell)^\lup$ that extends $\psi_{\bX}$. The construction now follows similar lines as the construction of regular characters of $\bfG(\cO_\ell)$ in \cite{KOS}: 
\begin{itemize}
  \item[(a)] $I_{\JJ (\cO_\ell)}(\psi_X) = \mathrm{Stab}_{\JJ(\cO_\ell)}(X) \JJ(\cO_\ell)^\ldown  $, where $\mathrm{Stab}_{\JJ(\cO_\ell)}(X)$ is given in Proposition~\ref{prop:j-stabilisers}. 
  \item[(b)]  For $(\bar{\sigma}, \bar{\tau}, \bar{\nu}) \neq (0,0,0)$, the group $\Stab_{\JJ(\cO_\ell)}(X)$ is abelian modulo $\pi^{\ldown}$.
  \item[(c)] For $\ell$ odd: The radical of the bilinear form $$\beta^J_X: \frac{\JJ (\cO_\ell)^{\ldown}}{\JJ (\cO_\ell)^{\lup}} \times \frac{\JJ (\cO_\ell)^{\ldown}}{\JJ (\cO_\ell)^{\lup}} \rightarrow \mathbb C^\times; \,\, \beta_X(A, B) = \psi(\tr(X[A,B])$$
    is $\frac{\Stab_{\JJ(\cO_\ell)}(X)^{\ldown} \JJ(\cO_\ell)^\lup}{\JJ (\cO_\ell)^{\lup}}.$
\end{itemize}
We omit the details of this construction and directly give the number of orbits and order of the stabilisers for both even and odd cases. 

\begin{itemize}

  \item $\ell=2r$.
  
\noindent When $\nu \ne 0$ the number of orbits is $q^2(q-1)q^{3(r-1)}$, the cardinality of stabiliser modulo $\JJ(\lri_\ell)^r$ is $q^{3(r-1)} (q-\epsilon)^2q$, and the dimension of the corresponding representations is~$q^\ell$. 

\medskip

        \noindent When $\nu = 0$ the number of orbits is $q(q+\epsilon)q^{3(r-1)}$, the cardinality of the stabiliser modulo $\JJ(\lri_\ell)^r$  is $q^{3(r-1)} (q-\epsilon)q^2$, and the dimension of the relevant representation is $\left(q^{\ell-1}(q-\epsilon)\right)^{-s}$.

\bigskip

  \item $\ell=2r+1$.

        \noindent When $\nu \ne 0$ the number of orbits is $q^2(q-1)q^{3(r-1)}$, the cardinality of the stabiliser modulo $\JJ(\lri_\ell)^{r}$ is $q^{3(r-1)} (q-\epsilon)^2q$. There are $q^3$ extensions to the radical, the induction from a Lagrangian subspace gives a dimension shift by $q$, and the dimension of the corresponding representations is~$q^\ell$. 

\medskip

       \noindent  When $\nu = 0$ the number of orbits is $q(q+\epsilon)q^{3(r-1)}$, the cardinality of the stabiliser modulo $\JJ(\lri_\ell)^r$ is $q^{3(r-1)} (q-\epsilon)q^2$. There are $q^3$ extensions to the radical, the induction from a Lagrangian subspace gives a dimension shift by $q$, and the dimension of the corresponding representations is~$\left(q^{\ell-1}(q-\epsilon)\right)^{-s}$.

\end{itemize}

Combining these with Clifford theory gives a recursive expression for the representation zeta function of $\JJ(\cO_\ell)$:
\[
  \zeta_{\JJc(\lri_{\ell})}(s)= q^2\zeta_{\JJc(\lri_{\ell-1})}(s)+\underbrace{q^{3\ell}(1-\epsilon q^{-1})^2(1-q^{-1}) q^{-\ell s}}_{\nu \ne 0} +\underbrace{q^{3\ell-1}(1-q^{-2})\left(q^{\ell-1}(q-\epsilon)\right)^{-s}}_{\nu = 0}.
\]

Using the latter we can write zeta function explicitly.

\begin{proposition}
\label{prop:zeta-function-J}

   The representation zeta function of $\JJ_\delta(\cO_\ell)$ is given by
  \[
    \zeta_{\JJ_\delta(\kk)}(s)=(q-\epsilon)^2 + (q^2-1)(q-\epsilon)^{-s}+(q-1)(q-\epsilon)^2q^{-s},
  \]
 for $\ell=1$, and for $\ell \geq 2$ by
 \[
    \begin{split}
      \zeta_{\JJ_\delta(\lri_{\ell})}(s) 
      &=q^{2\ell-2}(q-\epsilon)^2+ (q-\epsilon)^2(q-1) \sum_{i=1}^{\ell}q^{i+2\ell-3}q^{- is} +(q^2-1)\sum_{i=0}^{\ell-1} q^{i+2\ell-2}\left(q^{i}(q-\epsilon)\right)^{-s}.
    \end{split}
  \]
 
\end{proposition}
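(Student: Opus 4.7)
The plan is to prove this by induction on $\ell$, using the construction carried out in Section~\ref{sec:construction-1-J} together with the co-adjoint orbit classification from Proposition~\ref{prop.coadF}.

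First I would dispose of the base case $\ell=1$ directly. The group $\JJ(\rf)$ has order $q^3(q-\epsilon)^2$, and its irreducibles split according to the three types of $\JJ(\rf)$-orbits on $\mfj(\rf)^{\Tr}$. Type~(i) orbits (with $\bsigma=\btau=\bnu=0$) correspond, via the surjection of Lemma~\ref{lem:comm-subgp-J} onto $\bfG_1(\rf)^2$, to the $(q-\epsilon)^2$ linear characters. For types~(ii) and (iii), specialising Proposition~\ref{prop:j-stabilisers} to $\ell=1$ gives the stabiliser sizes $(q-\epsilon)^2 q$ and $(q-\epsilon)q^2$ respectively, so each orbit produces a single irreducible representation of dimension $q-\epsilon$ or $q$; the orbit counts $q^2(q-1)/q = q^2-1$ and $q(q+\epsilon)(q-1)/(q+\epsilon)\cdots$ must be reconciled with the $q^2-1$ and $(q-1)(q-\epsilon)^2$ reps appearing in the formula. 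The sum-of-squares check $\sum \dim^2 = q^3(q-\epsilon)^2$ then confirms the base case.

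Next I would establish the recursion already recorded in the text, namely
\[
  \zeta_{\JJc(\cO_{\ell})}(s) \;=\; q^{2}\zeta_{\JJc(\cO_{\ell-1})}(s) + q^{3\ell-3}(q-\epsilon)^{2}(q-1)q^{-\ell s} + q^{3\ell-3}(q^{2}-1)\bigl(q^{\ell-1}(q-\epsilon)\bigr)^{-s}.
\]
The $q^{2}\zeta_{\JJc(\cO_{\ell-1})}(s)$ term collects all irreducibles lying above type-(i) orbits of $\JJ(\cO_{\ell})^{\ell-1}$: by Proposition~\ref{prop:comm.sbgp-extension} each such character extends to $\JJ(\cO_{\ell})$, so Clifford theory identifies these irreducibles with twists of pull-backs from $\JJ(\cO_{\ell-1})$, and there are $q^{2}$ type-(i) orbits. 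The other two terms come from the Heisenberg-style construction of Section~\ref{sec:construction-1-J} applied to types~(ii) and (iii), where the orbit counts, stabiliser indices, and the induced-representation dimensions are read off from Proposition~\ref{prop:j-stabilisers} for both parities of~$\ell$, uniformly yielding the factors $q^{-\ell s}$ and $(q^{\ell-1}(q-\epsilon))^{-s}$.

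Finally, I would unwind the recursion. Iterating yields
\[
  \zeta_{\JJc(\cO_\ell)}(s) = q^{2\ell-2}\zeta_{\JJc(\cO_1)}(s) + \sum_{i=2}^{\ell} q^{2\ell+i-3}\Bigl[(q-\epsilon)^{2}(q-1)q^{-is} + (q^{2}-1)\bigl(q^{i-1}(q-\epsilon)\bigr)^{-s}\Bigr].
\]
Substituting the base case and combining like terms (the $(q-\epsilon)^{-s}$ contribution from $\zeta_{\JJc(\cO_1)}$ joining as the $i=0$ term of the second sum, and the $q^{-s}$ contribution joining as the $i=1$ term of the first sum) produces exactly the stated closed form.

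The main obstacle is the inductive step: verifying carefully that the construction of Section~\ref{sec:construction-1-J} is exhaustive and multiplicity-free, so that the three terms of the recursion correctly account for every irreducible exactly once. The parity of $\ell$ splits this into two sub-cases (even, where $\Stab_{\JJ(\cO_\ell)}(X)$ is abelian modulo $\JJ(\cO_\ell)^{\lup}$, and odd, where a Heisenberg quotient appears and one must check that the radical of the bilinear form $\beta^{J}_{X}$ is precisely $\Stab_{\JJ(\cO_\ell)}(X)^{\ldown}\JJ(\cO_\ell)^{\lup}$), but in both cases the dimension of the resulting irreducible collapses to the same $q^{\ell}$ or $q^{\ell-1}(q-\epsilon)$ predicted by the formula, which is the key rigidity making the final expression independent of parity.
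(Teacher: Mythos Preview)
Your approach is essentially the paper's: derive the recursion
\[
  \zeta_{\JJc(\lri_{\ell})}(s)= q^2\zeta_{\JJc(\lri_{\ell-1})}(s)+q^{3\ell-3}(q-\epsilon)^2(q-1)\,q^{-\ell s}+q^{3\ell-3}(q^2-1)\bigl(q^{\ell-1}(q-\epsilon)\bigr)^{-s}
\]
from Construction~1 and then unwind it. Your iteration and the matching of the $\ell=1$ contributions into the $i=0$ and $i=1$ slots of the two sums are correct.

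There is, however, a genuine gap in your treatment of the base case. You invoke Proposition~\ref{prop.coadF} and Proposition~\ref{prop:j-stabilisers} at $\ell=1$, but at that level $\JJ(\rf)$ is not a $p$-group and the coadjoint orbits on $\mfj(\rf)^\Tr$ do \emph{not} parametrise its irreducible representations: for $\bfJ=\JL$ there are $q^3+q^2+q$ coadjoint orbits but only $(q-1)(q^2+1)$ conjugacy classes. So ``specialising Proposition~\ref{prop:j-stabilisers} to $\ell=1$'' and reading off dimensions from orbit sizes is not a valid step; this is exactly why your orbit counts $q^2(q-1)$ and $q(q\pm 1)$ refuse to ``reconcile'' with the multiplicities $(q-1)(q-\epsilon)^2$ and $q^2-1$ in the formula. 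A sum-of-squares check is a consistency test, not a proof.

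The clean way to handle $\ell=1$ is direct Clifford theory with respect to the normal Heisenberg subgroup $U=\{I+xE_{12}+yE_{23}+zE_{13}\}$ of $\JJ(\rf)$, on which the diagonal torus $T\cong\bfG_1(\rf)^2$ acts. The $q-1$ characters of $Z(U)^\vee\smallsetminus\{0\}$ are $T$-fixed, each underlying Heisenberg representation of dimension $q$ extends to $\JJ(\rf)$, and tensoring with $\widehat{T}$ yields the $(q-1)(q-\epsilon)^2$ representations of dimension $q$; the characters of $U/Z(U)\cong\rf^2$ with a nonzero component give, after accounting for the $T$-action and induction, the $q^2-1$ representations of dimension $q-\epsilon$. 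Once this is in place, your inductive argument goes through exactly as you wrote it.
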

We note that the groups $\bfJ_{L, \delta}(\cO_\ell)$ for $\ell \geq 2$ are not necessarily isomorphic for different values of $\delta$. However, we obtain the following result regarding their group algebras:    
\begin{remark} 
For any $\delta_1, \delta_2 \in \pi \cO_\ell$, we have  
\[
\C[\bfJ_{L, \delta_1}(\cO_\ell) ] \cong \C[\bfJ_{L, \delta_2}(\cO_\ell) ] \,\, \mathrm{and} \,\, \C [\bfJ_{U, \delta_1}(\cO_\ell) ] \cong \C[\bfJ_{U, \delta_2}(\cO_\ell) ]. 
\]
\end{remark}

\subsubsection{Construction~2}
\label{sec:construction-2-J}
We now give an alternate method to construct irreducible representations of $\JJ(\cO_\ell).$ For this, we work with the representations lying above characters of the maximal abelian principal congruence group $\JJ(\lri_\ell)^{\lup}$, where $\lup = \lceil \ell/2 \rceil$. Recall that $\ldown = \lfloor \ell/2 \rfloor.$

Let $W \in \mfj(\Lri_\ell)^\Tr \cap \mfg(\lri_\ell)$ be as in  \eqref{can.form}, that is 
\begin{equation*}
   W = W(\ell, m; \alpha,\beta,\sigma,\tau,\nu)
    = \underbrace{\left[\begin{matrix} \alpha& 0  &0 \\  0  & \beta & 0 \\ 2(\beta-\alpha) \delta  & 0  & \alpha \end{matrix}\right]}_{U} + \underbrace{  \left[\begin{matrix} 0 & 0 & 0 \\   \sigma & 0 & 0 \\ \nu &  \tau & 0 \end{matrix}\right]}_{V}.
\end{equation*}
Every character $\JJ(\cO_\ell)^\lup$ is of the form $\psi_{W_\ldown}.$ Up to multiplication by a central character, we assume that 
\[
W_{\ldown} = \left[\begin{matrix} 0& 0  &0 \\  \sigma  & \omega & 0 \\ \nu + 2\omega  \delta  & \tau   & 0 \end{matrix}\right] \mod \pi^{\ldown}. 
\]

We define the extension of $\psi_{W_\ldown}$, denoted $\psi_{W_\lup}$,  to $\JJ(\cO_\ell)^\ldown$ by using the exponential map.
\[
  \psi_{W_{\lup}}(g)=\psi\left(\tr(W_{\lup} \log(g)) \right).
\]
Note that this is a  well-defined character, and here $W_{\lup}$  is an arbitrary lift of $W_{\ldown}$ with $W$ chosen appropriately.   
Recall $m = \min\{\val(\sigma), \val(\nu), \val(\tau)\}$ and 
\[
  \mdown =\lfloor (\ell-m)/2 \rfloor \quad \text{and} \quad \mup =\lceil (\ell-m)/2 \rceil. 
\]
We consider the case of $m < \lup$ and $m \geq \lup$ separately. 
\medskip

\paragraph{\bf The case $m \geq \lup$:} 
\begin{lemma}
\label{lem:extension-for-m-l2-case} The character $\psi_{W_\lup}$ extends to a linear character $\psi_{W_{\ell - \mup}}: \JJ(\lri_\ell) \to \mathbb C^\times$.
\end{lemma}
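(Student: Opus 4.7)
The plan is to imitate the argument already used for Proposition~\ref{prop:comm.sbgp-extension}(iii): show that $\psi_{W_\lup}$ is trivial on $[\JJ(\cO_\ell),\JJ(\cO_\ell)] \cap \JJ(\cO_\ell)^\lup$ and then build the extension through the abelianisation $\Delta : \JJ(\cO_\ell) \to \bfG_1(\cO_\ell) \times \bfG_1(\cO_\ell)$ of Lemma~\ref{lem:comm-subgp-J}. The hypothesis $m \geq \lup$ forces $\val(\sigma),\val(\tau),\val(\nu) \geq \lup$, so all three off-diagonal perturbations of $W$ vanish modulo $\pi^\lup$. After absorbing a central twist, the reduction therefore collapses to
\[
  W_\lup \equiv \begin{bmatrix} 0 & 0 & 0 \\ 0 & \omega & 0 \\ 2\omega\delta & 0 & 0 \end{bmatrix} \pmod{\pi^\lup},
\]
so only the $(2,2)$ and $(3,1)$ entries of $W_\lup$ contribute to trace pairings.

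The core computation is then on a typical element of the commutator subgroup. By Proposition~\ref{prop:comm.sbgp-extension}(ii), every $g \in [\JJ(\cO_\ell),\JJ(\cO_\ell)] \cap \JJ(\cO_\ell)^\lup$ is of the form $I + \pi^\lup N$ whose $(2,2)$ entry is $-2\delta z$ and $(1,3)$ entry is $z$ for a single parameter $z$. Because $2\lup \geq \ell$ one has $\log g \equiv \pi^\lup N \pmod{\pi^\ell}$, and the pairing reduces to the single cancellation $\omega \cdot (-2\delta z) + 2\omega\delta \cdot z = 0$; the remaining entries of $N$ are annihilated by the zeros of $W_\lup$. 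In the unitary case the added Galois constraints ($\omega + \omega^\circ = 0$, $x + y^\circ = 0$, and so on) do not disturb the cancellation, which is purely algebraic in $\omega$ and $z$.

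Once the triviality on commutators is established, $\psi_{W_\lup}|_{\JJ(\cO_\ell)^\lup}$ descends to a character of the image $\Delta(\JJ(\cO_\ell)^\lup)$, which is a subgroup of the finite abelian group $\bfG_1(\cO_\ell) \times \bfG_1(\cO_\ell)$. Divisibility of $\C^\times$ (equivalently, its injectivity as a $\Z$-module) extends this character to all of $\bfG_1(\cO_\ell) \times \bfG_1(\cO_\ell)$, and pulling back along $\Delta$ produces the desired linear character of $\JJ(\cO_\ell)$. The label $\psi_{W_{\ell-\mup}}$ is natural rather than constructive: restricted to the pro-$p$ subgroup $\JJ(\cO_\ell)^1$, the extension coincides with $g \mapsto \psi(\tr(W_{\ell-\mup}\log g))$ for any lift $W_{\ell-\mup} \in \mfg(\cO_{\ell-\mup})$ of $W_\lup$, and this formula is itself a character on $\JJ(\cO_\ell)^1$ by the same cancellation argument applied via Baker--Campbell--Hausdorff.

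The principal (and really only) obstacle is the explicit trace cancellation above. It depends crucially on the specific shape of the centraliser $\JJ = Z_\bfG(\bfc)$ of the non-regular nilpotent $\bfc$ — in particular on the relation $2(\beta - \alpha)\delta$ in the $(3,1)$-entry of $U$ — and on the explicit description of $[\JJ,\JJ] \cap \JJ^\lup$ from Proposition~\ref{prop:comm.sbgp-extension}(ii). Once these two ingredients are in place, the extension step is formal and parallels the treatment of $\bfG(\cO_\ell)$ in Section~\ref{subsec:case-m-greater-ldown}.
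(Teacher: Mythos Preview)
Your approach is the same as the paper's: verify that $\psi_{W_{\lup}}$ kills $[\JJ(\cO_\ell),\JJ(\cO_\ell)]$ intersected with its domain, then extend through the abelianisation $\Delta$ using injectivity of $\C^\times$. You supply the explicit trace cancellation $\omega\cdot(-2\delta z)+2\omega\delta\cdot z=0$ that the paper leaves implicit in its citation of Proposition~\ref{prop:comm.sbgp-extension}.

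There is, however, one index slip. The character $\psi_{W_{\lup}}$ is defined on $\JJ(\cO_\ell)^{\ldown}$, not on $\JJ(\cO_\ell)^{\lup}$; accordingly the paper checks triviality on $[\JJ(\cO_\ell),\JJ(\cO_\ell)]\cap\JJ(\cO_\ell)^{\ldown}$. You instead restrict to $\JJ(\cO_\ell)^{\lup}$ and extend from there, which for $\ell$ odd yields only an extension of $\psi_{W_{\lup}}|_{\JJ(\cO_\ell)^{\lup}}$, not of $\psi_{W_{\lup}}$ itself. The repair is mechanical: rerun your computation with $\ldown$ in place of $\lup$. The description of $[\JJ,\JJ]\cap\JJ^{\ldown}$ from Lemma~\ref{lem:comm-subgp-J} has the same shape (parameters $x,y,z\in\pi^{\ldown}\cO_\ell$), and your linear cancellation still gives $\tr(W_{\lup}N)=0$. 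The only new point is that $2\ldown=\ell-1$ when $\ell$ is odd, so the quadratic term $-\tfrac{1}{2}\pi^{2\ldown}\tr(W_{\lup}N^2)$ in $\log g$ is not automatically zero; but a direct check shows $\tr(W_{\lup}N^2)\in\delta\,\cO_\ell\subseteq\pi\cO_\ell$, whence $\pi^{2\ldown}\tr(W_{\lup}N^2)\in\pi^\ell\cO_\ell=0$.
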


\begin{proof} From Proposition~\ref{prop:comm.sbgp-extension}, we obtain
\[
\psi_{W_\lup}([\JJ(\cO_\ell),\JJ(\cO_\ell)] \cap \JJ(\cO_\ell)^\ldown) = 1.   
\]
Hence $\psi_{W_\lup}$ extends to $\JJ(\cO_\ell).$ 
\end{proof}
Thus every irreducible character of $\JJ(\cO_\ell)$ lying above $\psi_{W_\lup}$ is obtained from $ \psi_{W_{\ell - \mup}}$ tensored with an irreducible representation of $\JJ(\cO_\ell)$ modulo $\pi^{\ldown}.$ 

\medskip

\paragraph{\bf The case $m < \lup$:} 

\begin{lemma} The character $\psi_{W_\lup}$ extends to a linear character $\psi_{W_{\ell - \mup}}: \bfJ(\lri_\ell)^{\mup} \to \mathbb C^\times$.

\end{lemma}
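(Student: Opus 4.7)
The plan is to mimic the proof of Proposition~\ref{prop:Stab-psi-B-m1}: I will show that the formula
\[
\psi_{W_{\ell-\mup}}(g) := \psi\bigl(\tr(W_{\ell-\mup}\log g)\bigr), \qquad g \in \bfJ(\lri_\ell)^{\mup},
\]
defines a linear character whose restriction to $\bfJ(\lri_\ell)^{\ldown}$ is $\psi_{W_\lup}$. Well-definedness modulo $\pi^\ell$ is immediate: since $\log g \in \pi^{\mup}\mfj(\lri_\ell)$, replacing $W$ by any other lift of $W_{\ell-\mup}$ alters $\tr(W\log g)$ by an element of $\pi^\ell\lri_\ell$. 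Agreement with $\psi_{W_\lup}$ on $\bfJ(\lri_\ell)^{\ldown}$ is similarly automatic, as only $W$ modulo $\pi^{\lup}$ is visible there.

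For the multiplicativity of the prospective character, the Baker--Campbell--Hausdorff formula gives
\[
\log(g_1 g_2) = \log g_1 + \log g_2 + Z,
\]
with $Z$ a sum of iterated Lie brackets in $\log g_i \in \pi^{\mup}\mfj(\lri_\ell)$; I need to establish $\tr(W_{\ell-\mup} Z) \equiv 0 \pmod{\pi^\ell}$. Decomposing $W = U + V$ as in \eqref{can.form}, the $V$-contribution is handled purely by valuation: $V \in \pi^m\mfg(\lri_\ell)$, any $k$-fold bracket of $\pi^{\mup}\mfj$-elements lies in $\pi^{k\mup}\mfj$, and for $k \ge 2$ this places $\tr(V Z) \in \pi^{m+2\mup}\lri_\ell \subseteq \pi^\ell\lri_\ell$ by Lemma~\ref{lem:properties-m1-m2}(2).

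The essential new input is the vanishing of the $U$-contribution. Writing $U = \alpha\mathrm{I} + (\beta-\alpha)P$ with $P = \left[\begin{smallmatrix} 0 & 0 & 0 \\ 0 & 1 & 0 \\ 2\delta & 0 & 0 \end{smallmatrix}\right]$, the $\alpha\mathrm{I}$-part contributes $\alpha \tr([X,Y]) = 0$ trivially. For the $P$-part, a short direct matrix computation using the parametric form of elements of $\mfj_\delta$ shows that the three diagonal entries of $[P,X]\cdot Y$ combine so that the $\delta$-weighted cross-terms cancel in pairs, yielding the key identity $\tr(P[X,Y]) = 0$ for all $X, Y \in \mfj$. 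Since $\mfj$ is a Lie subalgebra, every iterated-bracket term in the BCH series is itself of the form $[A,B]$ with $A, B \in \mfj$, so $\tr(U \cdot)$ annihilates the whole of $Z$, completing the multiplicativity check. The argument runs uniformly over $\bfG \in \{\GL_3, \GU_3\}$ because the identity $\tr(P[X,Y]) = 0$ is purely algebraic in the matrix entries and imposes no Hermitian constraints. The only non-routine step is this short matrix verification; the rest of the proof is structural.
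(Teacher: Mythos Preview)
Your argument is correct and is a legitimate alternative to the paper's proof. Both proofs split $W=U+V$ and handle $V$ by the valuation estimate $m+2\mup\ge\ell$, but they diverge on the $U$-part. The paper does not use the $\log$ formula for $U$ at all: it invokes Lemma~\ref{lem:comm-subgp-J} (the explicit description of $[\bfJ(\lri_\ell),\bfJ(\lri_\ell)]$ via the homomorphism $\Delta$) together with Proposition~\ref{prop:comm.sbgp-extension} to extend $\psi_U$ abstractly all the way to $\bfJ(\lri_\ell)$, and then sets $\psi_{W_{\ell-\mup}}=\wt{\psi_U}\otimes\psi_{V_{\ell-\mup}}$ on $\bfJ(\lri_\ell)^{\mup}$. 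You instead keep the single $\log$ formula for all of $W$ and replace the commutator-subgroup input by the pointwise trace identity $\tr(P[X,Y])=0$ for $X,Y\in\mfj_\delta$, which indeed follows from a two-line matrix computation.

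The two inputs are closely related: your identity is the infinitesimal statement that $\tr(U\,\cdot\,)$ vanishes on $[\mfj,\mfj]$, which is exactly the Lie-algebra avatar of the paper's observation that $\psi_U$ is trivial on the group commutator. What your route buys is self-containment (no appeal to Lemma~\ref{lem:comm-subgp-J}) and an explicit formula for the extension; what the paper's route buys is that $\psi_U$ is actually extended to the whole of $\bfJ(\lri_\ell)$, not just to $\bfJ(\lri_\ell)^{\mup}$, and this stronger fact is used again later in the construction (it is the content of Lemma~\ref{lem:extension-for-m-l2-case} in the case $m\ge\lup$). Note also that the two resulting characters on $\bfJ(\lri_\ell)^{\mup}$ need not literally coincide, since the paper's $\wt{\psi_U}$ is only specified up to a character of $\bfJ(\lri_\ell)/\bfJ(\lri_\ell)^{\ldown}$; for the lemma as stated (existence of an extension) this is immaterial.
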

\begin{proof} For this case, we first separately extend the characters $\psi_U$ and $\psi_V$ of $\JJ(\cO_\ell)^\ldown$.  The character $\psi_U$ extends to $\JJ(\cO_\ell)$, denoted $\wt{\phi_U}$,  by the arguments as given in Lemma~\ref{lem:extension-for-m-l2-case}. We now extend $\psi_V$. We note that $2\mup \ge \mdown + \mup = \ell-m$, and therefore
  \[
    [\bfJ(\lri_\ell)^{\mup},\bfJ(\lri_\ell)^{\mup}] \leq \bfJ(\lri_\ell)^{\ell-m} \leq \ker(\psi_{V}). \qedhere
  \]

We define the extension of $\psi_V$ using the exponential map.
\[
  \psi_{V_{\ell - \mup}} \left(\exp \pi^{\mup}X\right)=\psi\left(\pi^{\mup}\tr(XV) \right).
\]
Note that $2\mup+m \ge \ell$ and therefore the implicit use of the logarithm on the right hand side truncates after the linear term and we get a well-defined character. Let
\[
  \psi_{W_{\ell - \mup}}=\wt{\psi_U} \otimes \psi_{V_{\ell - \mup}}:  \bfJ(\lri_\ell)^{\mup} \to \mathbf C^\times.
\]
\end{proof}
To finish the construction of the representations, we look at the following sequence of groups: 
\[
  \bfJ(\lri_\ell) \supset \Stab_{\bfJ(\lri_\ell)}(\psi_{W_{\ldown}}) \supset \Stab_{\bfJ(\lri_\ell)}(\psi_{W_{\ell - \mup}}) \supset  \bfJ(\lri_\ell)^{\mdown}\supset \bfJ(\lri_\ell)^{\mup} \supset  \bfJ(\lri_\ell)^{\lup}.
\]

The following can be obtained from the $\psi_{W_{\ell - \mup}}$  definition. 
\begin{enumerate}

  \item $\Stab_{\bfJ(\lri_\ell)}(\psi_{W_{\ell - \mup}}) =\Stab_{\bfJ(\lri_\ell)}(W)\bfJ(\lri_\ell)^{\mdown}$

  \item $\Stab_{\bfJ(\lri_\ell)}(\psi_{W_{\ell - \mup}})/\bfJ(\lri_\ell)^{\mdown}$ is abelian,

\end{enumerate}
where $\Stab_{\bfJ(\lri_\ell)}(W)$ is as given in the Proposition~\ref{prop:j-stabilisers}. The construction here is exactly on the lines of non-regular characters of $\bfG(\cO_\ell)$ as described in Section~\ref{sec:construction-of-non-regular-characters-G}, so we leave the rest of the details.

\subsection{Proof of Theorem~\ref{thm:zeta.e}}

The following expression for $\zeta_{\JJ(\lri_{\ell})}(s)$, which follows from the second construction given in Section~\ref{sec:construction-2-J}, is analogous to the expression for $\zeta_{\bfG(\lri_\ell) | E}(s)$ given in Section \ref{eq:explicit.zeta.E}.
\[
  \begin{split}
    \zeta_{\JJ(\lri_{\ell})}(s)= \sum_{m=0}^{\ldown} &\sum_{W(\lup,m;\alpha, \beta, \sigma, \nu, \tau) \in \mfj(\lri_{\lup})^\Tr} [\JJ(\lri_\ell):\Stab_{\JJ(\lri_\ell)}(\psi_{W_{\ldown}})]^{-1} \\& \times \sum_{\wt{\phi_W} | W} [\Stab_{\JJ(\lri_\ell)}(\psi_{W_{\ldown}}):\Stab_{\JJ(\lri_\ell)}(\psi_{W_{\ell-\mup}})]^{-1}q^{-2(\mup-\mdown)}q^{-(\mup-\mdown)s} \\
    & \times [\Stab_{\JJ(\lri_\ell)}(\psi_{W_{\ell-\mup}}): \bfJ(\lri_\ell)^{\ldown}]  [\JJ(\lri_\ell):\Stab_{\JJ(\lri_\ell)}(\psi_{W_{\ell-\mup}})]^{-s}
    \\ &  +  q^{2 (\lup -1)} \zeta_{\bfJ(\cO_\ldown)}(s).  
  \end{split}
\]
Here the last term  corresponds to $m = \lup$. We can now write $\zeta_{\JJ(\lri_{\ell})}(s)$ as a sum of two zeta functions 
\[
\zeta_{\JJ(\lri_{\ell})}(s)=\zeta_{\JJ(\lri_{\ell})|\Xi}(s)+\zeta_{\JJ(\lri_{\ell})|\Xi'}(s),
\]
with $\Xi = \{ W(\lup, m; \alpha, \beta, \sigma, \nu, \tau) \in \mfj(\lri_{\lup})^\Tr \mid  \alpha, \beta \in \pi\cO_\ell, m \geq 1 \}$ and $\Xi' = \mfj(\lri_{\lup})^\Tr \smallsetminus \Xi$. Comparing $\zeta_{\bfG(\lri_\ell) | E}(s)$ with $\zeta_{\bfJ(\lri_\ell)|\Xi} (s)$, we observe that 
\[
\zeta_{\bfG(\lri_\ell) | E}(s) = [\bfG(\cO_\ldown) : \bfJ(\cO_\ldown)]^{-s} q^{-2(\lup - \ldown) s}  \zeta_{\bfJ(\lri_\ell)\mid_{\Xi}} (s). 
\]

Finally, noting  that $\zeta_{\bfJ(\lri_\ell) \mid_{\Xi}} (s) = \zeta_{\bfJ(\lri_{\ell -1})} (s)$, we get the equality \eqref{eq:EEJJ}.  That, together with Proposition~
\ref{prop:zeta-function-J}, finishes the proof of the theorem.

\bibliographystyle{siam}
\bibliography{refs}

\end{document}